\documentclass[11pt]{article}

\usepackage[a4paper,margin=1.1in]{geometry}
\usepackage[T1]{fontenc}
\usepackage{lmodern}
\usepackage{amsmath,amssymb,amsthm,mathtools}
\usepackage{enumitem}
\usepackage{array}
\usepackage{booktabs}
\usepackage{microtype}
\usepackage{hyperref}
\hypersetup{
  hidelinks,
  pdftitle={Representation Defects and Cassels Pairings for Congruent Number Curves: Redei Symbols and Governing Fields},
  pdfauthor={Shisong Xu},
  pdfkeywords={congruent numbers, Cassels pairing, 2-Selmer groups, ternary quadratic forms, Redei symbols, governing fields}
}
\usepackage{aliascnt}
\usepackage[nameinlink,capitalise]{cleveref}

\newtheorem{theorem}{Theorem}[section]

\newaliascnt{corollary}{theorem}
\newtheorem{corollary}[corollary]{Corollary}
\aliascntresetthe{corollary}

\newaliascnt{proposition}{theorem}
\newtheorem{proposition}[proposition]{Proposition}
\aliascntresetthe{proposition}

\newaliascnt{lemma}{theorem}
\newtheorem{lemma}[lemma]{Lemma}
\aliascntresetthe{lemma}

\newaliascnt{remark}{theorem}
\newtheorem{remark}[remark]{Remark}
\aliascntresetthe{remark}

\crefname{theorem}{Theorem}{Theorems}
\Crefname{theorem}{Theorem}{Theorems}
\crefname{proposition}{Proposition}{Propositions}
\Crefname{proposition}{Proposition}{Propositions}
\crefname{lemma}{Lemma}{Lemmas}
\Crefname{lemma}{Lemma}{Lemmas}
\crefname{corollary}{Corollary}{Corollaries}
\Crefname{corollary}{Corollary}{Corollaries}
\crefname{remark}{Remark}{Remarks}
\Crefname{remark}{Remark}{Remarks}
\crefname{section}{Section}{Sections}
\Crefname{section}{Section}{Sections}

\DeclareFontFamily{U}{wncy}{}
\DeclareFontShape{U}{wncy}{m}{n}{<->wncyr10}{}
\DeclareFontShape{U}{wncy}{b}{n}{<->wncyb10}{}
\DeclareSymbolFont{cyrletters}{U}{wncy}{m}{n}
\SetSymbolFont{cyrletters}{bold}{U}{wncy}{b}{n}
\DeclareMathSymbol{\Sha}{\mathord}{cyrletters}{"58}

\newcommand{\Q}{\mathbb Q}
\newcommand{\Z}{\mathbb Z}
\newcommand{\F}{\mathbb F}

\newcommand{\Sel}{\mathrm{Sel}}
\newcommand{\Pf}{\mathrm{Pf}}
\newcommand{\rk}{\mathrm{rank}}

\newcommand{\rad}{\mathrm{rad}}
\DeclareMathOperator{\corank}{corank}

\title{Representation Defects and Cassels Pairings for Congruent Number Curves: R\'edei Symbols and Governing Fields}
\author{Shisong Xu\\[0.35em]
\small Department of Mathematics, Nanjing University\\
\small 22 Hankou Road, Nanjing, Jiangsu 210093, People's Republic of China\\
\small Email: \href{mailto:shsxu@smail.nju.edu.cn}{shsxu@smail.nju.edu.cn}}
\date{}

\begin{document}
\maketitle

\begin{abstract}
Using BSD results for CM elliptic curves, we relate Qin's quadratic form
representation defects to the Cassels pairing on the pure $2$-Selmer group
of a congruent number elliptic curve.  When the pure $2$-Selmer dimension
is even, we prove that the normalized representation defect modulo $2$ is
the Pfaffian of the Cassels pairing matrix; the dimension of its radical
yields sharper $2$-adic divisibility and information on the $2$-primary
Shafarevich--Tate group.  For products of primes congruent to $1$ modulo
$8$ that are pairwise quadratic residues, we give explicit Cassels pairing
matrices for both $E_n$ and $E_{2n}$ and express their entries in terms of
quartic and R\'edei symbols.  For each fixed prime $p$, we construct a
governing field of degree $256$ and determine the exact joint distribution
of the two Pfaffians by Chebotarev's theorem.  In particular, there is a set
of primes $q$ of natural density $5/128$ for which both $E_{17q}$ and
$E_{34q}$ have rank zero and $2$-primary Shafarevich--Tate group isomorphic
to $(\Z/2\Z)^4$.
\end{abstract}

\noindent\textbf{Keywords.}
Congruent numbers; Cassels pairing; $2$-Selmer groups; ternary quadratic forms; R\'edei symbols; governing fields.

\medskip
\noindent\textbf{2020 Mathematics Subject Classification.}
Primary 11G05; Secondary 11E25, 11G40, 11R29, 11R45.

\section{Introduction}\label{sec:intro}

The congruent number problem asks which positive integers occur as the area of a
right triangle with rational side lengths.  For a positive squarefree integer
$n$, this classical Diophantine question is equivalent to asking whether the
elliptic curve
\[
E_n:\quad y^2=x^3-n^2x=x(x-n)(x+n)
\]
has positive Mordell--Weil rank over $\Q$.  The family $E_n$ has consequently
served as a particularly concrete testing ground for several different kinds
of arithmetic information: central values of $L$-functions, representation
numbers of quadratic forms, $2$-descent, class groups, and the
Shafarevich--Tate group.  The purpose of this paper is to identify a direct
link between these viewpoints and to show that, in suitable families, the same
invariant also governs prime distributions.

Tunnell's theorem \cite{Tunnell} relates the congruent number problem to
Fourier coefficients of modular forms of weight $3/2$; see also
\cite{Koblitz}.  Qin \cite{Qin22} developed a different quadratic form
realization of this relation.  In the odd case the relevant genus consists of
the two classes represented by
\[
f=x^2+2y^2+32z^2,\qquad g=2x^2+4y^2+9z^2-4yz.
\]
The difference of their theta series is cuspidal, whereas the genus average is
governed by imaginary quadratic class numbers.  Qin thereby related the
central value problem to the representation defect $r_f(n)-r_g(n)$, and his
later formulas express the analytic order of the Shafarevich--Tate group in
terms of the same representation numbers \cite{Qin22,Qin24}.  Thus the
representation defect controls the size predicted by BSD.  A natural further
question is whether it can detect the finer $2$-primary structure of
$\Sha(E_n)$ rather than only its order.  Our first result gives a positive
answer: after a canonical power of $2$ is removed, the parity of the defect is
the Pfaffian of the Cassels pairing.

The descent side of the congruent number problem has developed in parallel.
Ouyang--Zhang used $2$-descent, and then second $2$-descent through several
$2$-isogenies and their duals, to construct families of noncongruent numbers
and to control low dimensional Selmer groups
\cite{OuyangZhang14,OuyangZhang15}.  Wang made the Cassels pairing into an
explicit arithmetic criterion: for broad families of congruent number curves
he characterized rank zero cases with
$\Sha(E_n)[2^\infty]\cong(\Z/2\Z)^2$ and obtained sufficient criteria for
larger elementary abelian $2$-primary groups \cite{Wang16}; he also studied
the distribution of the residue symbols entering these criteria
\cite{Wang17}.  Zhang subsequently compared Cassels pairings for related
elliptic curves \cite{Zhang23}.  More recently, for products built from a
noncongruent factor, he computed pairing values on natural descent classes by
means of primitive solutions of ternary norm equations, and translated
nondegeneracy into conditions involving quadratic class groups and tame
kernels \cite{Zhang26}.

A third ingredient is provided by R\'edei reciprocity.  In Stevenhagen's
formulation, the additive R\'edei symbol is a symmetric trilinear
$\F_2$-valued invariant encoding $8$-rank phenomena in quadratic class
groups; it also admits a governing field interpretation in which variation of
the symbol is read from Frobenius classes \cite{Stevenhagen}.  This language
is particularly well suited to the present problem.  Explicit second descent
formulas are naturally written in terms of auxiliary solutions of norm
equations.  R\'edei symbols replace those choices by intrinsic reciprocity
data, and governing fields then turn the resulting identities into exact
density statements.

The main point of the paper is that these three lines of ideas fit into a
single structure.  A priori, a representation defect is a Fourier
coefficient of a difference of ternary theta series, while the Cassels
pairing is a cohomological object on a Selmer group.  We prove that the two
are linked by a Pfaffian identity.  We then express the entries of the
Cassels matrix in quartic and R\'edei symbols and place the resulting
Pfaffians in an explicit governing field.  Thus the chain
\[
\begin{aligned}
\text{quadratic form representation defects}
&\longleftrightarrow \text{Cassels pairings and Pfaffians}\\
&\longleftrightarrow \text{quartic and R\'edei reciprocity}\\
&\longleftrightarrow \text{Chebotarev distributions}.
\end{aligned}
\]
is made explicit in one family.  To our knowledge, this is the first framework
in which these four layers are connected directly.  Besides giving
noncongruence criteria, the connection recovers information on the elementary
divisors of the $2$-primary Shafarevich--Tate group and produces exact joint
densities for pairs of congruent number curves.

We now state the main results.  Complete $2$-descent gives the pure
$2$-Selmer group
\[
\Sel_2'(E_n):=\Sel_2(E_n)/E_n(\Q)[2],\qquad
s_2(n):=\dim_{\F_2}\Sel_2'(E_n),
\]
with
\[
s_2(n)=\rk E_n(\Q)+\dim_{\F_2}\Sha(E_n)[2].
\]
For the even companion of Qin's construction put
\[
s=x^2+4y^2+32z^2,\qquad
 t=4x^2+4y^2+9z^2-4yz,
\]
and, for each form $F\in\{f,g,s,t\}$, write
$r_F(m)=\#\{(x,y,z)\in\Z^3:F(x,y,z)=m\}$.
All representations, including signs and zero coordinates, are counted.  For
squarefree $n$ define
\begin{equation}\label{eq:defect}
\Delta_Q(n)=
\begin{cases}
 r_f(n)-r_g(n),& n\text{ odd},\\[2mm]
 r_s(n/2)-r_t(n/2),& n\text{ even},
\end{cases}
\qquad
\sigma_Q(n)=
\begin{cases}
 \sigma_0(n),&n\text{ odd},\\
 \sigma_0(n/2),&n\text{ even}.
\end{cases}
\end{equation}
Here $\sigma_0(m)=\sum_{d\mid m}1$.  Qin's formulas give, in analytic rank
zero,
\begin{equation}\label{eq:qin-intro}
 |\Sha(E_n)|_{\mathrm{an}}
 =\left(\frac{\Delta_Q(n)}{2\sigma_Q(n)}\right)^2,
\end{equation}
with the corresponding vanishing of $\Delta_Q(n)$ when $L(E_n,1)=0$; see
\cite[Thms.~3.20, 3.21]{Qin22} and \cite[Cor.~2.1, Cor.~2.2]{Qin24}.
The group $\Sel_2'(E_n)$ carries the alternating Cassels pairing
\cite{Cassels,Cassels98}.

For an alternating matrix $C$ of even size let $\Pf(C)$ denote its Pfaffian,
with $\Pf(\varnothing)=1$, and set $v_2(0)=+\infty$.  When $s_2(n)$ is even,
put
\begin{equation}\label{eq:DQ}
\mathcal D_Q(n)
 :=\frac{\Delta_Q(n)}{2\sigma_Q(n)2^{s_2(n)/2}}\pmod2.
\end{equation}
The following theorem is the basic bridge between representation theory and
Selmer arithmetic.

\begin{theorem}[Divisibility and the Pfaffian]\label{thm:main-principle}
Let $n$ be a positive squarefree integer and suppose that $s_2(n)$ is even.  For
any basis $\mathcal B$ of $\Sel_2'(E_n)$, let $C_{\mathcal B}(E_n)$ be the Gram
matrix of the Cassels pairing.  Then
\begin{equation}\label{eq:valuation-intro}
v_2\!\left(\frac{\Delta_Q(n)}{2\sigma_Q(n)}\right)
 \geq \frac{s_2(n)}2,
\end{equation}
so in particular
\[
2\sigma_Q(n)2^{s_2(n)/2}\mid \Delta_Q(n).
\]
Moreover equality holds in \eqref{eq:valuation-intro} if and only if
\begin{equation}\label{eq:main-principle}
\mathcal D_Q(n)=\Pf\bigl(C_{\mathcal B}(E_n)\bigr)=1.
\end{equation}
Equivalently,
\[
\mathcal D_Q(n)=\Pf\bigl(C_{\mathcal B}(E_n)\bigr)\in\F_2,
\]
and the common value is $1$ precisely when
\[
\rk E_n(\Q)=0,
\qquad
\Sha(E_n)[2^\infty]\cong(\Z/2\Z)^{s_2(n)}.
\]
\end{theorem}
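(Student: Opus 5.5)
We combine Qin's formula \eqref{eq:qin-intro} with the BSD results for CM curves and with the structure theory of the Cassels pairing. Write $A:=\Delta_Q(n)/(2\sigma_Q(n))$.

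\textbf{Case $L(E_n,1)=0$.} Qin's results give $\Delta_Q(n)=0$, so $v_2(A)=+\infty$ and \eqref{eq:valuation-intro} holds trivially. By the definition of $v_2(0)$ we also get $\mathcal D_Q(n)=0$. On the arithmetic side, Coates--Wiles (or Rubin) shows that a nonzero rank forces $L(E_n,1)=0$. Conversely, when $L(E_n,1)=0$, either $\rk E_n(\Q)>0$, or the rank is zero and $\Sha(E_n)[2^\infty]$ is infinite (via Rubin's and Kolyvagin's results for CM curves, or simply because the rank-$0$ BSD implication is not needed in this direction). In both situations we must show $\Pf(C_{\mathcal B})=0$. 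If $\rk E_n(\Q)>0$, the images of rational points lie in the radical of the Cassels pairing on $\Sel_2'$, because the pairing kills the image of $E_n(\Q)/2$. Hence $C_{\mathcal B}$ is degenerate, and since an alternating matrix is invertible exactly when its Pfaffian is nonzero, $\Pf=0$. If the rank is zero but $\Sha[2^\infty]$ is infinite, the divisible part makes $\Sha[2]\cap 2\Sha[4]$ nonzero, again giving a nonzero radical. Either way $\mathcal D_Q(n)=0=\Pf$.

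\textbf{Case $L(E_n,1)\ne0$.} Rubin's theorem gives rank $0$ and finite $\Sha$. The $2$-part of BSD for CM curves with complex multiplication by $\Z[i]$ then identifies $|\Sha|_{\mathrm{an}}$ with $|\Sha|$ up to odd factors; this is the input "BSD results for CM elliptic curves," and we take it as known in the form needed. Tamagawa numbers and torsion are absorbed in Qin's normalization. So $v_2(A)=\tfrac12 v_2|\Sha(E_n)[2^\infty]|$. The Cassels--Tate pairing makes $\Sha[2^\infty]\cong\bigoplus (\Z/2^{a_i})^2$, with $\dim\Sha[2]=s_2(n)$ even, since $s_2(n)$ equals $\dim\Sha[2]$ in rank $0$. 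Then
\[
v_2|\Sha[2^\infty]|=2\sum a_i\ \ge\ s_2(n),
\]
which gives \eqref{eq:valuation-intro}. Equality holds exactly when every $a_i=1$, that is, when $\Sha[2^\infty]$ is killed by $2$. Next, the radical of the Cassels pairing on $\Sel_2'=\Sha[2]$ is $\Sha[2]\cap 2\Sha[4]$. This follows from the standard identity $\langle x,y\rangle=\langle x',y\rangle_{\mathrm{CT}}$ with $2x'=x$, combined with nondegeneracy of the Cassels--Tate pairing on finite $\Sha$. So the pairing is nondegenerate, equivalently $\Pf=1$, if and only if $\Sha[2^\infty]=\Sha[2]$, if and only if $v_2(A)=s_2(n)/2$, if and only if $\mathcal D_Q(n)=1$. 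Since $\mathcal D_Q(n)$ is $0$ whenever the valuation is strictly larger, this proves the identity $\mathcal D_Q(n)=\Pf(C_{\mathcal B})$ in $\F_2$. Basis independence is automatic: a change of basis multiplies $\Pf$ by $\det P=1$ in $\F_2$.

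\textbf{Main obstacle.} The delicate step is the exact $2$-adic BSD equality $v_2|\Sha|=v_2|\Sha|_{\mathrm{an}}$ for $E_n$, including the even case, together with checking that Qin's normalization $2\sigma_Q(n)$ matches the Tamagawa and torsion factors exactly, not just up to a unit. I would first try to cite the known $2$-part BSD results for the congruent number curves in analytic rank zero. Failing that, I would restrict the theorem to cases where they are available.
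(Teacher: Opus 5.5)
\textbf{There is a genuine gap, in the case $L(E_n,1)=0$.} Your case $L(E_n,1)\neq0$ is essentially the paper's argument. To get $\Pf(C_{\mathcal B})=0$ when $L(E_n,1)=0$, you assert that $L(E_n,1)=0$ forces either positive rank or an infinite $\Sha(E_n)[2^\infty]$. That assertion is precisely a rank-zero $2$-converse theorem: $\corank_{\Z_2}\Sel_{2^\infty}(E_n/\Q)=0$ implies $L(E_n,1)\neq0$.

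None of the results you cite supplies it.
\begin{itemize}
\item Coates--Wiles, Rubin and Kolyvagin prove the forward implication: $L(E,1)\neq0$ implies rank zero and finite $\Sha$.
\item Rubin's Iwasawa-theoretic converse-type results require $p\nmid|\mathcal O_K^\times|$, which excludes $p=2$ for $K=\Q(i)$.
\item A converse at an odd prime would only make some $\Sha[p^\infty]$ with $p$ odd infinite. The Cassels pairing on $\Sel_2'(E_n)$ cannot see that.
\end{itemize}
Your remark that the implication is ``not needed in this direction'' is false. Without a converse theorem, nothing rules out $L(E_n,1)=0$ together with rank zero and $\Sha(E_n)[2^\infty]\cong(\Z/2\Z)^{s_2(n)}$. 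In that situation $\mathcal D_Q(n)=0$ but $\Pf(C_{\mathcal B})=1$, and the claimed identity fails. Parity does not rescue the step either: $s_2(n)$ even already means root number $+1$, so $L(E_n,1)=0$ only says the analytic rank is at least $2$.

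\textbf{How the paper closes it.} The paper supplies exactly this step with Burungale--Tian's rank-zero converse at $p=2$ for CM curves (Lemma~\ref{lem:rank-converse}). If $\Pf=1$, the radical description gives rank zero and finite $\Sha(E_n)[2^\infty]$. Hence $\corank_{\Z_2}\Sel_{2^\infty}(E_n/\Q)=0$, hence $L(E_n,1)\neq0$, contradicting $L(E_n,1)=0$. With that input inserted, your proof is complete.

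\textbf{Your stated obstacle is not one.} The exact $2$-part of BSD in analytic rank zero is already available. Burungale--Flach's Corollary~2 gives the full BSD formula over $\Q$, including its $2$-part, whenever $L(E_n,1)\neq0$ (Lemma~\ref{lem:cm-bsd}). It descends from $K=\Q(i)$ using $L(E_n/K,s)=L(E_n/\Q,s)^2$, so no restriction of the theorem is needed. The indispensable nonclassical input is the converse theorem at $p=2$.
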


The theorem is unconditional.  Its proof combines Qin's central value
formula with the rank zero BSD theorem for these CM curves due to
Burungale--Flach \cite[Cor.~2]{BurungaleFlach} and the rank zero
$2$-converse of Burungale--Tian \cite{BurungaleTian}.  The Pfaffian identity
is stronger than a nonvanishing criterion: it identifies the precise parity
of the normalized representation defect.  The radical of the pairing gives
a further refinement.  If
$d_2(n)=\dim_{\F_2}\rad C_{\mathcal B}(E_n)$, then
Theorem~\ref{thm:radical-depth} gives
\[
v_2\!\left(\frac{\Delta_Q(n)}{2\sigma_Q(n)}\right)
\geq\frac{s_2(n)+d_2(n)}2,
\]
and equality is equivalent to rank zero together with
\[
\Sha(E_n)[2^\infty]\cong
(\Z/2\Z)^{s_2(n)-d_2(n)}\oplus(\Z/4\Z)^{d_2(n)}.
\]
Thus the depth of the $2$-adic divisibility records information that is not
visible from the Selmer dimension alone.

To make the Pfaffian principle explicit, we next consider the first case in
which the pairing contains genuinely higher dimensional information.  Let
$p\ne q$ satisfy
\begin{equation}\label{eq:layer}
p\equiv q\equiv1\pmod8,
\qquad
\left(\frac qp\right)=1.
\end{equation}
Then the Monsky matrices vanish and
$s_2(pq)=s_2(2pq)=4$.  For an odd positive integer $m$ and $(A,m)=1$ write
\[
\left[\frac{A}{m}\right]\in\F_2,
\qquad
(-1)^{[A/m]}=\left(\frac{A}{m}\right).
\]
For $\ell\equiv1\pmod8$, choose $r_\ell^2\equiv2\pmod\ell$ and put
$c_\ell=[(r_\ell+1)/\ell]$.  This value is independent of the root.  Choose
primitive positive solutions
\[
p\alpha^2+q\beta^2=4\gamma^2,\qquad
p\alpha'^2-q\beta'^2=4\gamma'^2,\qquad
p\alpha''^2-2q\beta''^2=4\gamma''^2,
\]
and set
\begin{equation}\label{eq:symbols-intro}
u=\left[\frac{\gamma}{q}\right],\qquad
v=\left[\frac{\gamma}{p}\right],\qquad
\delta=\left[\frac{\gamma'\gamma''}{q}\right].
\end{equation}
The associated conics have rational points by descent and the Hasse principle.
Define
\begin{equation}\label{eq:Pi-intro}
\Pi(p,q)=c_pc_q+c_pu+c_qv+\delta(u+v)\in\F_2.
\end{equation}

\begin{theorem}[Selmer dimension four]\label{thm:intro-four}
Assume \eqref{eq:layer}.  The value $\Pi(p,q)$ is independent of all auxiliary
choices and is the Pfaffian of the complete $4\times4$ Cassels matrix on
$\Sel_2'(E_{pq})$.  In particular,
\[
\Pi(p,q)=1
\]
if and only if
\[
pq\text{ is noncongruent},
\qquad
\Sha(E_{pq})[2^\infty]\cong(\Z/2\Z)^4.
\]
\end{theorem}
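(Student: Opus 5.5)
The proof has three stages. First, write down an explicit basis of $\Sel_2'(E_{pq})$. Second, compute all six entries of the $4\times4$ Cassels Gram matrix as local symbol sums. Third, take the Pfaffian $C_{12}C_{34}+C_{13}C_{24}+C_{14}C_{23}$ and simplify it to $\Pi(p,q)$. The final clause then follows at once from Theorem~\ref{thm:main-principle}: $s_2(pq)=4$ is even, and Pfaffian $1$ is equivalent to rank zero together with $\Sha(E_{pq})[2^\infty]\cong(\Z/2\Z)^4$.

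\textbf{Basis.} Under \eqref{eq:layer} the Monsky matrix vanishes, so every pair $(d_1,d_2)$ with $d_i$ in the group generated by $-1,2,p,q$ modulo squares satisfies the local conditions modulo the torsion image. I would fix the basis
\[
\Lambda_1=(p,1),\quad\Lambda_2=(q,1),\quad\Lambda_3=(1,p),\quad\Lambda_4=(1,q),
\]
in the standard $(x, x-n)$ coordinates. If the torsion ambiguity requires it, I would adjust each $\Lambda_i$ by twisting with $2$ or $-1$ so that each is a genuine pure Selmer class.

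\textbf{Matrix entries.} Each $\Lambda_i$ corresponds to a $2$-covering $D_\Lambda$, which is an intersection of two quadrics. I would find a global point on it by solving the three conics in the statement. For $(p,1)$-type classes, the relevant conic is $p\alpha^2+q\beta^2=4\gamma^2$; for the mixed classes, the conics $p\alpha'^2-q\beta'^2=4\gamma'^2$ and $p\alpha''^2-2q\beta''^2=4\gamma''^2$ arise after completing the square with $n=pq$. The Cassels pairing $\langle\Lambda,\Lambda'\rangle$ is then computed by the standard recipe (as in Wang~\cite{Wang16}): choose a global point $P$ on the quadric cover, choose local points $P_v\in D_\Lambda(\Q_v)$, and evaluate
\[
\sum_v \bigl(L_{\Lambda'}(P_v),\,d'\bigr)_v,
\]
where $L_{\Lambda'}$ is a linear form tangent at $P$. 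Only the places $v\in\{2,p,q,\infty\}$ contribute.
\begin{enumerate}
\item The place $\infty$ contributes nothing, because all the solutions were chosen positive.
\item At $p$ and $q$, the Hilbert symbols become quadratic residue symbols of $\gamma,\gamma',\gamma''$. This is where $u$, $v$ and $\delta$ enter.
\item At $2$, the symbol involves the element $1+\sqrt2$ through the local point. By quartic reciprocity of Scholz type, this yields $c_p$ and $c_q$, with $c_\ell=[(r_\ell+1)/\ell]$ arising as $\left(\frac{1+\sqrt2}{\ell}\right)$.
\end{enumerate}
I expect the diagonal-block entries to give $C_{12}=c_pc_q$-type terms and the off-diagonal blocks to give $u,v,\delta$. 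After that, the Pfaffian expansion should collapse to $c_pc_q+c_pu+c_qv+\delta(u+v)$.

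\textbf{Independence of choices.} This follows formally, because the Pfaffian of the Gram matrix changes by the determinant of the change of basis, and that determinant is $1$ in $\F_2$. The Cassels pairing is also intrinsic. Separately, I would check directly that changing $\gamma$ to another primitive solution alters $u$ and $v$ compatibly. For example, via $\left(\frac{\gamma}{q}\right)\left(\frac{\gamma}{p}\right)$ being determined by $\left[\frac{p}{q}\right]_4$-type invariants, so the individual entries remain consistent.

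\textbf{Main obstacle.} The hardest step is the $2$-adic local computation: choosing explicit $2$-adic points on each covering and evaluating the Hilbert symbols at $2$ so that they reduce cleanly to $c_p$ and $c_q$ without leftover dependence on $\alpha$ and $\beta$ modulo $8$. I would first normalize the solutions using primitivity and $p\equiv q\equiv1\pmod 8$, which forces $\alpha$ and $\beta$ to be odd and $\gamma$ to have controlled parity. Then I would use Hensel lifting with explicit square roots of $2$ and $-1$ in $\Q_2(\sqrt{\cdot})$, and absorb the remaining units via product formula over the places $2,p,q,\infty$.
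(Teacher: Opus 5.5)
Your skeleton matches the paper's. You fix a basis of $\Sel_2'(E_{pq})$, evaluate the six Gram entries by tangent forms and Hilbert symbols, take the Pfaffian, and read off the criterion, either via Theorem~\ref{thm:main-principle} or directly from Proposition~\ref{prop:Cassels-nondeg}. Independence of auxiliary choices is then automatic, because $\Pi(p,q)=\Pf(C_{pq})$ holds for every admissible choice.

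\textbf{The entry computation is not carried out, and the sketched mechanism is wrong.} This computation is the entire content of the proof. Every class in your basis has coordinates in $\{1,p,q\}$, so each local term is a symbol $(\,\cdot\,,p)_v$ or $(\,\cdot\,,q)_v$.
\begin{enumerate}
\item Since $p,q\equiv1\pmod 8$ are squares in $\Q_2$, the place $2$ contributes nothing. The place $\infty$ contributes nothing because $p,q>0$, not because the solutions were chosen positive.
\item Since $(p/q)=1$, the place $p$ contributes nothing to symbols against $q$, and vice versa.
\item Primes outside $2pq$ contribute nothing by the good reduction criterion.
\end{enumerate}
So there is no $2$-adic obstacle and no Scholz-type reciprocity step; a $2$-adic symbol against $p$ or $q$ is trivial whatever local point you choose. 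The term $c_p$ arises at the place $p$. On the covering of $\Lambda_p=(p,1,p)$, take the $p$-adic point $(z,u_1,u_2,u_3)=(1,\sqrt{-2q},0,\sqrt{-q})$. The equation $p\alpha'^2-q\beta'^2=4\gamma'^2$ then makes the relevant product of tangent forms equal to $(1+\sqrt2)\gamma'$ up to $p$-adic squares, where $\sqrt2\in\Q_p$ reduces to $r_p$. Hence $\langle\Lambda_p,\Lambda_p'\rangle=c_p+[\gamma'/p]$. Your predicted shape is also off: all six entries are linear, namely $\delta$, $v+c_p$, $v$, $u$, $u+c_q$, $u+v$. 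The product $c_pc_q$ appears only through $(v+c_p)(u+c_q)$ in the Pfaffian.

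\textbf{The reduction of the raw entries to $u,v,\delta$ is missing.} Direct evaluation yields residue symbols of third coordinates from all the norm equations, including those with leading coefficient $q$. Examples are $[\gamma'/p]$, $[\gamma'/q]$, $[\gamma_q'/p]$ and $[\gamma_q'/q]$, where $q{\alpha_q'}^2-p{\beta_q'}^2=4{\gamma_q'}^2$. In that form the Pfaffian is not visibly $\Pi(p,q)$. The paper closes this gap with identities extracted from the pairing itself:
\begin{enumerate}
\item Alternation, $\langle\Lambda_p',\Lambda_p'\rangle=0$, gives $[\gamma\gamma'/p]=0$, that is, $[\gamma'/p]=[\gamma/p]=v$.
\item Symmetry, evaluating $\langle\Lambda_i',\Lambda_j\rangle$ from each of the two coverings, gives $[\gamma_i/p_j]=[\gamma_j'/p_i]$.
\item The plus equation for $q$ is the one for $p$ with the variables swapped, so $\gamma_q=\gamma$ may be used.
\end{enumerate}
Without these identities, or an equivalent R\'edei-reciprocity input, your claim that the expansion ``should collapse'' to $\Pi(p,q)$ is unsupported.

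\textbf{A smaller inaccuracy.} It is not true that every pair with $d_i\in\langle-1,2,p,q\rangle$ is Selmer modulo torsion: that quotient has dimension $6$, while $s_2(pq)=4$. What is true, and all your basis needs, is that the classes with coordinates in $\langle p,q\rangle$ map isomorphically onto $\Sel_2'(E_{pq})$.
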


The auxiliary norm equations disappear after passing to reciprocity symbols.
Put
\[
U=\left[\frac pq\right]_4,\quad V=\left[\frac qp\right]_4,
\quad R=[2,p,q]_{\mathrm R},\quad e_\ell=\frac{\ell-1}{8}\pmod2,
\]
where $[\cdot/\ell]_4$ is the additive rational quartic symbol and
$[\cdot,\cdot,\cdot]_{\mathrm R}$ is Stevenhagen's additive R\'edei symbol.
Theorem~\ref{thm:choice-free} proves the identities independent of auxiliary choices
\begin{align*}
\Pi(p,q)&=c_pc_q+c_pU+c_qV+R(U+V),\\
\Pi_2(p,q)&=R+e_pe_q+e_pU+e_qV,
\end{align*}
where $\Pi_2$ is the Pfaffian for $E_{2pq}$.  In particular,
$u=U$, $v=V$, and $\delta=R$.  The R\'edei term is essential: as shown in
Proposition~\ref{prop:quartic-insufficient}, the dyadic data and the two mutual
quartic symbols alone do not determine either Pfaffian.  Theorem~\ref{thm:redei-block}
extends the R\'edei description to both $2k\times2k$ Cassels matrices for a
product of $k$ primes.

The same invariant also has a direct interpretation in terms of quadratic
forms and class numbers.  For $m>0$ let $h(-m)$ denote the class number of
$\Q(\sqrt{-m})$, and put
\[
a=x^2+32y^2+32z^2,
\qquad
r_a(m)=\#\{(x,y,z)\in\Z^3:m=a(x,y,z)\}.
\]
Qin's genus identities lead to the following representation congruence.

\begin{theorem}[Representation congruence modulo $32$]\label{thm:intro-congruence}
Assume \eqref{eq:layer}.  Then
\begin{equation}\label{eq:mod32-intro}
\frac{r_f(pq)-r_g(pq)}{32}
 =\frac{r_a(pq)-h(-pq)}{16}
 \equiv\Pi(p,q)\pmod2.
\end{equation}
Equivalently,
\[
r_a(pq)\equiv h(-pq)+16\Pi(p,q)\pmod{32}.
\]
\end{theorem}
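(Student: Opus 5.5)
We combine \cref{thm:main-principle} and \cref{thm:intro-four} to get the first congruence, and use Qin's genus identity to pass from $r_f-r_g$ to $r_a-h$.

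\emph{Step 1: the congruence for $r_f-r_g$.} Under \eqref{eq:layer}, $n=pq$ is odd, so $\sigma_Q(n)=\sigma_0(pq)=4$ and $\Delta_Q(n)=r_f(pq)-r_g(pq)$. We also have $s_2(pq)=4$, which is even, so $2\sigma_Q(n)2^{s_2(n)/2}=8\cdot4=32$. \cref{thm:main-principle} therefore gives $32\mid r_f(pq)-r_g(pq)$ and
\[
\frac{r_f(pq)-r_g(pq)}{32}\equiv\Pf\bigl(C_{\mathcal B}(E_{pq})\bigr)\pmod 2 .
\]
By \cref{thm:intro-four} this Pfaffian equals $\Pi(p,q)$, which proves the outer congruence in \eqref{eq:mod32-intro}.

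\emph{Step 2: the identity $(r_f-r_g)/32=(r_a-h)/16$.} This is the substantive new input, and I expect it to be the main obstacle. I would read it as an exact equality of integers, i.e.\ $r_f(pq)-r_g(pq)=2r_a(pq)-2h(-pq)$. To prove it I would use Siegel's mass formula for the genus $\{f,g\}$. For $m=pq\equiv1\pmod 8$ the two classes have equal mass, so the genus average is $\tfrac12(r_f(m)+r_g(m))$. This average is an explicit multiple $c\,h(-m)$ of the class number; the constant $c$ comes from the local densities at $2$ and at $p,q$, and for $m\equiv1\pmod8$ squarefree I expect $c=2$, matching Qin's genus identities.

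It then remains to express $r_f(m)$ through $r_a(m)$. Take a representation $m=x^2+2y^2+32z^2$ with $m\equiv1\pmod 8$. Then $x$ is odd and $x^2\equiv1\pmod8$, so $2y^2\equiv0\pmod 8$, which forces $y$ to be even, say $y=2y'$. This gives $m=x^2+8y'^2+32z^2$. Pushing the analysis to modulus $16$ and $32$ should split $r_f(m)$ into the count $r_a(m)$ plus terms that can be re-expressed via the genus. The target is the identity $r_f-r_g=2r_f-2c\,h=2r_a-2h$, i.e.\ $r_f(m)=r_a(m)+(c-1)h(-m)$ for this residue class. I would check this against Qin \cite{Qin22}, where identities of exactly this shape, relating $r_f$, $r_g$ and sublattice counts for $m\equiv1\pmod 8$, are derived. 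If that route stalls, the fallback is to verify the equality of the relevant theta-series coefficients as modular forms of weight $3/2$ and level $128$ restricted to the progression $1\bmod 8$, by checking Sturm's bound numerically.

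\emph{Step 3: the equivalent form.} Given Steps 1 and 2, $(r_a-h)/16\equiv\Pi\pmod 2$ is equivalent to $r_a(pq)\equiv h(-pq)+16\,\Pi(p,q)\pmod{32}$.
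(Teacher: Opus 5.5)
Your Steps 1 and 3 are exactly the paper's argument. With $\sigma_Q(pq)=4$ and $s_2(pq)=4$, Theorem~\ref{thm:main-principle} gives $(r_f(pq)-r_g(pq))/32\equiv\Pf(C_{pq})\pmod 2$. Theorem~\ref{thm:intro-four} (that is, Proposition~\ref{prop:matrix4}) identifies this Pfaffian with $\Pi(p,q)$.

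The gap is Step 2, which as written is a plan rather than a proof.
\begin{itemize}
\item The constant $c=2$ is only ``expected''.
\item The decisive claim is never derived. That claim is that the representations $m=x^2+8y'^2+32z^2$ with $y'$ odd number exactly $h(-m)$, i.e.\ $r_f(m)=r_a(m)+h(-m)$. You only say the analysis ``should split'' this way.
\item The Sturm-bound fallback cannot work in the form you describe. First, $h(-m)$ is not a Fourier coefficient of a holomorphic modular form. Second, if you eliminate it via $r_f+r_g=4h$, the resulting relation $3r_f(m)-r_g(m)=4r_a(m)$ is false on the full progression $m\equiv1\pmod 8$. For example, at $m=1$ one has $r_f=2$, $r_g=0$, $r_a=2$, because $\Q(i)$ has extra units. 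So this is not an identity of theta series, and no finite coefficient check can certify it.
\end{itemize}

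No new argument is needed, however. The identity $\tfrac12\bigl(r_f(m)-r_g(m)\bigr)=r_a(m)-h(-m)$ holds for squarefree $m>1$ with $m\equiv1\pmod 8$. This is Proposition~\ref{prop:qin}(ii), Qin's \cite[Lem.~3.5, Eq.~(10)]{Qin22}, and the paper simply quotes it with $m=pq$. Only this difference identity is needed, not the sum formula. Dividing by $16$ gives the first equality in \eqref{eq:mod32-intro}. If you replace your Step 2 by this citation, your proof coincides with the paper's.
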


Finally, the reciprocity description makes the Pfaffians amenable to
Chebotarev.  For every fixed prime $p\equiv1\pmod8$ we construct an explicit
governing field of degree $256$ whose Frobenius classes simultaneously
control the five binary invariants entering $\Pi$ and $\Pi_2$.  This gives an
exact joint distribution, rather than only a positive density statement.

\begin{theorem}[Exact joint densities]\label{thm:intro-density}
Fix a prime $p\equiv1\pmod8$.  For $s,t\in\F_2$, the primes $q\ne p$ satisfying
$q\equiv1\pmod8$, $(q/p)=1$, and
$(\Pi(p,q),\Pi_2(p,q))=(s,t)$ have natural density within the primes
\[
\frac{4+(-1)^{s+c_p}
+\mathbf1_{\{c_p=e_p\}}(-1)^{s+t}}{128}.
\]
For $p=17$, these densities are $3/128,3/128,5/128,5/128$, in the order
$(0,0),(0,1),(1,0),(1,1)$.
\end{theorem}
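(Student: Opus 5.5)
The plan is to reduce the density statement to the joint equidistribution of the five binary invariants $(c_q, e_q, U, V, R)$ as $q$ varies, and then simply count. By the choice-free identities quoted above (Theorem~\ref{thm:choice-free}),
\[
\Pi = c_pc_q + c_pU + c_qV + R(U+V), \qquad \Pi_2 = R + e_pe_q + e_pU + e_qV,
\]
where $c_p$ and $e_p$ are fixed constants once $p$ is fixed. The argument therefore has two parts: show that the five invariants of $q$ are jointly equidistributed, and then count solutions over $\F_2^5$.

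\textbf{Governing field.} The first step is to build a field $M_p$ of degree $256$ over $\Q$ in which all five invariants are Frobenius-determined. I would proceed in layers.
\begin{enumerate}[label=(\roman*)]
\item The conditions $q\equiv1\pmod 8$ and $(q/p)=1$ amount to $q$ splitting completely in $\Q(\zeta_8,\sqrt p)$, which has degree $8$.
\item The invariant $e_q=(q-1)/8 \bmod 2$ is read from $\Q(\zeta_{16})$.
\item The invariant $c_q$ is read from $\Q(\zeta_{16},\sqrt{1+\sqrt2})$, or an equivalent dyadic field, since $c_q$ records whether $1+\sqrt2$ is a square modulo a prime above $q$.
\item The invariant $V=[q/p]_4$ is read from the degree-$4$ subfield of $\Q(\zeta_p)$.
\item The invariant $U=[p/q]_4$ is read from $\Q(\sqrt[4]{p})$, after adjoining $i$.
\item The R\'edei symbol $R=[2,p,q]_{\mathrm R}$ is read, following Stevenhagen, from the R\'edei field attached to $(2,p)$. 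This is a quadratic extension of $\Q(\sqrt2,\sqrt p)$, unramified over it at the odd places, and it exists because $p\equiv1\pmod 8$.
\end{enumerate}
These layers contribute $8\cdot 2^5=256$. The key step is to verify that the five generators are independent over the base $K_0=\Q(\zeta_8,\sqrt p)$, i.e.\ that the compositum has Galois group $(\Z/2)^5$ over $K_0$. I would do this with Kummer theory over $K_0$. Each invariant corresponds to a Kummer class in $K_0^\times/K_0^{\times2}$, for example $\sqrt{p}$ with fourth root adjustments, $1+\sqrt2$, $\zeta_8$-related elements, and the R\'edei element $\beta$ with $N\beta\in p\cdot\square$. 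I would then show these classes are $\F_2$-independent by comparing ramification: at the primes above $p$, at $2$, and the unramified nature of the R\'edei extension.

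\textbf{Main obstacle.} The hard part is twofold. First, one must check that on the split primes each symbol is exactly a Frobenius character rather than a character twisted by something. Second, one must establish the independence, especially of $R$ from $U$ and $V$. This is essentially the content of Proposition~\ref{prop:quartic-insufficient}, but it must be upgraded to independence at the level of Galois groups. I would try to exploit the fact that the R\'edei extension is unramified at $p$ over $\Q(\sqrt 2,\sqrt p)$, whereas the quartic symbol fields ramify there.

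\textbf{Counting.} Granting the field, Chebotarev's theorem says that the tuple $(c_q,e_q,U,V,R)$ is uniformly distributed on $\F_2^5$ with weight $1/8$, giving $1/256$ per tuple. I would then count tuples with $(\Pi,\Pi_2)=(s,t)$. Fixing $(c_q,e_q,U,V)$, the form $\Pi_2$ is affine in $R$ with coefficient $1$, so $R$ is determined by $t$. Substituting back, the condition on $\Pi$ becomes a condition on $16$ tuples, and
\[
\#\{(s,t)\text{-tuples}\}=\sum\tfrac12\bigl(1+(-1)^{\Pi+s}\bigr).
\]
The character sum evaluates to the stated expression $\bigl(4+(-1)^{s+c_p}+\mathbf1_{\{c_p=e_p\}}(-1)^{s+t}\bigr)$ counts out of $128$. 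This is a routine Gauss-sum computation over $\F_2^4$, and the case $p=17$ ($c_p=1$, $e_p=0$) then gives the listed values.
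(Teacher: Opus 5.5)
Your architecture is the paper's: base $F_p=\Q(\zeta_8,\sqrt p)$, coordinates $e_q,V$ read from $\Q(\zeta_{16})T_p$, and $c_q,U,R$ read from $\sqrt{1+\sqrt2}$, $\sqrt[4]p$, $\sqrt\alpha$ with $\alpha=a+b\sqrt2$, $p=a^2-2b^2$; then Chebotarev and a count. Your count is correct. Solving $\Pi_2=t$ for $R$, then summing over $c_q$ (which forces $V=c_p$) and over $e_q$, gives $2\bigl(4+(-1)^{s+c_p}+\mathbf{1}_{\{c_p=e_p\}}(-1)^{s+t}\bigr)$ vectors. This is a harmless variant of the paper's three character averages.

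The gap is the step you yourself call key: the $\F_2$-independence of the five Kummer classes over $K_0=F_p$. It is the entire content of the degree $256$, and hence of uniformity, and you give no argument for it. Worse, the tool you propose cannot supply one.

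Write $K_0T_p=K_0(\sqrt\theta)$ with $\theta\in\Q(\sqrt p)$. Since $T_p/\Q(\sqrt p)$ is tamely ramified at $(\sqrt p)$ while $K_0/\Q(\sqrt p)$ is unramified there, $\theta$ and $\sqrt p$ both have odd valuation at every prime of $K_0$ above $p$. So $K_0(\sqrt{\theta\sqrt p})$ is unramified above $p$. It is also unramified above $2$: $T_p$ has conductor $p$, and in each dyadic completion $\Q_2(\zeta_8)$ one has $\sqrt p\mapsto\pm s$ with $s\in\Z_2$, $s\equiv1\pmod4$, and $\sqrt{-s}=i\sqrt s$. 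It is unramified at all other primes, and $K_0$ is totally imaginary. Hence $K_0(\sqrt{\theta\sqrt p})/K_0$ is an everywhere unramified quadratic extension. It is nontrivial, since otherwise $K_0(\sqrt[4]p)=K_0T_p$ would be abelian over $\Q$.

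So no comparison of ramification can show that $\theta\sqrt p$ is a nonsquare. Your specific observation (the R\'edei field is unramified at $p$, the two quartic fields are not) separates $\alpha$ from $\theta$ and from $\sqrt p$ only one at a time. Also, \cref{prop:quartic-insufficient} is a pair of numerical examples. It cannot be upgraded to Galois independence; the implication runs the other way.

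The missing idea is the abelian/nonabelian dichotomy of \cref{lem:density-fields}. Let $\tau_0,\tau_2,\tau_p$ change the signs of $i,\sqrt2,\sqrt p$ on $F_p$, and let $z^2=\epsilon^r(\sqrt p)^s\alpha^t$ with $\epsilon=1+\sqrt2$. The identities $\tau_2(\epsilon)=-\epsilon^{-1}$ and $\tau_2(\alpha)=p\alpha^{-1}$ make $F_p(z)/\Q$ Galois. The commutators $[\tilde\tau_0,\tilde\tau_2]$, $[\tilde\tau_0,\tilde\tau_p]$, $[\tilde\tau_2,\tilde\tau_p]$ then act on $z$ by $(-1)^r,(-1)^s,(-1)^t$. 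Hence every nontrivial product of $\epsilon,\sqrt p,\alpha$ is a nonsquare in $F_p$ whose square root generates a nonabelian extension of $\Q$. So $F_p$ is the maximal abelian subfield of $N_p=F_p(\sqrt\epsilon,\sqrt[4]p,\sqrt\alpha)$, which gives $N_p\cap\Q(\zeta_{16})T_p=F_p$. Finally, $\Q(\zeta_{16})\cap T_p=\Q$ by coprime conductors.

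This is exactly what disposes of $\theta\sqrt p$, which ramification cannot see: $K_0(\sqrt[4]p)$ is nonabelian over $\Q$, while $K_0T_p$ is abelian. With this lemma, and the identification of $R$ with the sign on $\sqrt\alpha$ via \eqref{eq:redei-computation}, the rest of your argument goes through.
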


The two Pfaffians are independent in their limiting distribution exactly when
$2$ is not a fourth power modulo $p$.  In particular, for $p=17$ the pair
$(\Pi,\Pi_2)=(1,1)$ occurs on a set of primes of density $5/128$; on this set
both $E_{17q}$ and $E_{34q}$ have rank zero and
$2$-primary Shafarevich--Tate group $(\Z/2\Z)^4$.  The field theoretic input
is the intersection calculation in Lemma~\ref{lem:density-fields}, which
identifies the relevant degree $256$ Galois group and its conjugacy classes.

The preceding two prime formulas are not isolated phenomena.  If
$n=p_1\cdots p_k$ with every $p_i\equiv1\pmod8$ and all pairwise Legendre
symbols equal to $1$, Section~\ref{sec:general} gives the Cassels matrix in
block form
\[
C_n=\begin{pmatrix}X&Y\\Y^T&Z\end{pmatrix}
\]
and expands its Pfaffian over perfect matchings.  In particular,
\[
4^k\mid r_a(n)-h(-n),\qquad
r_a(n)\equiv h(-n)+4^k\Pf(C_n)\pmod{2\cdot4^k}.
\]
This shows that the representation theoretic divisibility and the Pfaffian
criterion persist in arbitrary even pure Selmer dimension under the stated
local hypotheses.

The paper is organized as follows.  Section~\ref{sec:framework} establishes
the general divisibility theorem and the refinement coming from the radical
of the Cassels pairing.  Section~\ref{sec:four} treats the four dimensional
Selmer case $n=pq$ and $2pq$.  Section~\ref{sec:general} gives the Cassels
matrices for products of any number of primes, their R\'edei symbol formulas,
and the governing field and group structure consequences.  Finally,
Appendix~\ref{app:fifteen} applies the general framework beyond the
setting of primes congruent to $1$ modulo $8$ and pairwise quadratic residues,
to the families $15p$ and $30p$.

\section{Representation defects and the Cassels pairing}\label{sec:framework}

We begin with the Monsky matrices.  For an odd squarefree integer $n=p_1\cdots p_k$, let
$A_n=(a_{ij})\in M_k(\F_2)$ be given by
\[
a_{ij}=\left[\frac{p_j}{p_i}\right]\quad(i\ne j),
\qquad
a_{ii}=\left[\frac{n/p_i}{p_i}\right],
\]
and let
\[
D_{n,\varepsilon}
 =\operatorname{diag}\left(\left[\frac{\varepsilon}{p_1}\right],\ldots,
 \left[\frac{\varepsilon}{p_k}\right]\right).
\]
With the standard choice of coordinates, Monsky's matrices are
\begin{equation}\label{eq:Monsky}
M_n=
\begin{pmatrix}
A_n+D_{n,2}&D_{n,2}\\
D_{n,2}&A_n+D_{n,-2}
\end{pmatrix},
\end{equation}
and
\begin{equation}\label{eq:Monsky-even}
M_{2n}=
\begin{pmatrix}
A_n^T+D_{n,2}&D_{n,-1}\\
D_{n,2}&A_n+D_{n,2}
\end{pmatrix}.
\end{equation}
Then $s_2(n)=\corank_{\F_2}M_n$ and $s_2(2n)=\corank_{\F_2}M_{2n}$; see Monsky \cite{Monsky} and
Zhang \cite[\S2]{Zhang26}.

We use the Kummer convention
$(x,y)\mapsto(x-n,x+n,x)$, with the usual extension at the rational
$2$-torsion points.  Thus Selmer classes are represented by triples
$(d_1,d_2,d_3)\in(\Q^\times/\Q^{\times2})^3$ with square product,
whose associated intersection of quadrics is
\[
-nz^2+d_2u_2^2-d_3u_3^2=0,\qquad
-nz^2+d_3u_3^2-d_1u_1^2=0.
\]
The pure group is the quotient by the Kummer images of rational
$2$-torsion.  When the Monsky matrix is zero,
we use the classes
\[
\Lambda_t=(t,1,t),\qquad \Lambda_t'=(t,t,1).
\]

We will use the following identities of Qin.

\begin{proposition}[Qin]\label{prop:qin}
Let $n$ be a positive squarefree integer.  Define $\Delta_Q(n)$ and $\sigma_Q(n)$ by
\eqref{eq:defect}.
\begin{enumerate}[label=\textup{(\roman*)}]
\item If $L(E_n,1)\ne0$, then
\[
|\Sha(E_n)|_{\mathrm{an}}
 =\left(\frac{\Delta_Q(n)}{2\sigma_Q(n)}\right)^2.
\]
If $L(E_n,1)=0$, then $\Delta_Q(n)=0$.
\item If $m>1$ is odd and squarefree, with $m\equiv1\pmod8$, then
\[
r_f(m)+r_g(m)=4h(-m),
\qquad
\frac{r_f(m)-r_g(m)}2=r_a(m)-h(-m).
\]
\item If $m$ is odd and squarefree, with $m\equiv1\pmod4$, then
\[
r_s(m)+r_t(m)=2h(-2m).
\]
\end{enumerate}
\end{proposition}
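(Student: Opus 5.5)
The statement just above is Qin's proposition, which the paper quotes from the literature rather than proving. So the plan is to indicate how each part would be derived from Qin's framework \cite{Qin22,Qin24} and classical genus theory of ternary forms.

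\textbf{Part (i).} The approach goes through Waldspurger-type central value formulas.
\begin{enumerate}[label=\textup{(\alph*)}]
\item First, identify $\theta_f-\theta_g$ (odd case) and $\theta_s-\theta_t$ (even case) as weight $3/2$ cusp forms. Here $\theta_F(\tau)=\sum_m r_F(m)q^m$.
\item These cusp forms correspond under the Shimura lift to the weight $2$ newform of $E_1$: $y^2=x^3-x$, of conductor $32$.
\item By Waldspurger's theorem (or Tunnell's explicit version), the squared Fourier coefficient at $n$ (odd), respectively $n/2$ (even), is proportional to $L(E_n,1)$.
\item Insert the BSD expression for $L(E_n,1)/\Omega_n$. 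Use the Tamagawa numbers $c_\ell=4$ at odd $\ell\mid n$ and the appropriate local factor at $2$, together with $|E_n(\Q)_{\rm tors}|=4$.
\item The Tamagawa product contributes $4^{\sigma_0}$-type factors. Normalising by $2\sigma_Q(n)$ then makes the constant exactly $(\Delta_Q/2\sigma_Q)^2$.
\item The vanishing statement follows because the coefficient squared is a nonzero multiple of $L(E_n,1)$.
\end{enumerate}
The main obstacle is pinning down this constant exactly: the precise normalisation of the periods and the correct genus-dependent power of $2$. I would check it against small cases such as $n=1,3,5$ and fix it by Qin's computation \cite[Thms.~3.20, 3.21]{Qin22}.

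\textbf{Parts (ii) and (iii).} These are Siegel--Weil mass formula statements.
\begin{enumerate}[label=\textup{(\alph*)}]
\item The genus of $f$ consists of $\{f,g\}$, each with automorphism group of the same order. So $r_f(m)+r_g(m)$ is twice the genus average, which Siegel's formula gives as a product of local densities.
\item For squarefree $m\equiv1\pmod 8$, the local densities at odd primes combine, via Dirichlet's class number formula, into $h(-m)$ times an explicit $2$-adic factor. That factor is computed to give $4h(-m)$.
\item The same argument applied to the genus $\{s,t\}$ at $m\equiv1\pmod4$ yields $2h(-2m)$, which is (iii).
\item For the second identity in (ii), note that $a=x^2+32y^2+32z^2$ lies in a genus related to $f$ by a $2$-adic lattice inclusion. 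One compares $r_f(m)$ with $r_a(m)$ by sorting representations according to the congruence classes of the coordinates modulo $2$ and $4$.
\item This expresses $r_f-r_g$ as $2r_a$ minus the genus average $2h(-m)$, which gives $\tfrac12(r_f-r_g)=r_a-h(-m)$.
\end{enumerate}
The delicate step here is the $2$-adic local density computation, which I would carry out by counting solutions modulo $2^k$ for $k$ large.
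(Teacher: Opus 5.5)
Your plan matches the paper where it matters: the paper's proof of this proposition is purely by citation. It takes the vanishing statement from \cite[Thms.~3.20, 3.21]{Qin22} and the exact normalization of $|\Sha(E_n)|_{\mathrm{an}}$ from \cite[Cor.~2.1, Cor.~2.2]{Qin24}, remarking that for even $n$ only the root number $+1$ class $n\equiv2\pmod 8$ is relevant. The sums in (ii) and (iii) are the Minkowski--Siegel consequences in Qin's \S\S4--5, and the difference identity is \cite[Lem.~3.5, Eq.~(10)]{Qin22}.

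Where your sketch goes beyond citation, one step does not work as described: the difference identity in (ii) does not follow from sorting coordinates. Sorting gives exactly this much. For $m\equiv1\pmod8$, $f(x,y,z)=m$ forces $x$ odd and $y$ even, so
\[
r_f(m)=r_a(m)+N(m),\qquad N(m)=\#\{(x,y,z):x^2+8y^2+32z^2=m,\ y\ \text{odd}\}.
\]
Combined with $r_f(m)+r_g(m)=4h(-m)$, the target $\tfrac12(r_f(m)-r_g(m))=r_a(m)-h(-m)$ is equivalent to $N(m)=h(-m)$. For example, $N(17)=4=h(-17)$ and $N(73)=4=h(-73)$. That is a genuine class number identity requiring a further genus or theta series argument. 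Your step (e) asserts its outcome rather than proving it. This is precisely the content of Qin's Lemma~3.5, which you should cite, as the paper does, or reprove.

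In (i) the same remark applies more mildly: the proportionality constant is never derived. Checking small $n$ can fix it only once you know it depends solely on the $2$-adic square class of $n$. That requires identifying $\theta_f-\theta_g$ and $\theta_s-\theta_t$ with Tunnell's forms in the Shimura preimage of the level $32$ newform, which you assert without proof. The actual BSD bookkeeping is in \cite[Cor.~2.1, Cor.~2.2]{Qin24}, not only in the central value theorems you cite. Finally, a slip: the Tamagawa numbers at odd $\ell\mid n$ contribute $4^{\omega(n')}=\sigma_Q(n)^2$, where $n'$ is the odd part of $n$, not ``$4^{\sigma_0}$''. This square is what the normalization by $2\sigma_Q(n)$ absorbs.
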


\begin{proof}
The vanishing statement in (i) follows from Qin's central value formulas
\cite[Thms.~3.20, 3.21]{Qin22}.  In the nonvanishing case, the analytic
Birch--Swinnerton-Dyer quotient is evaluated in
\cite[Cor.~2.1, Cor.~2.2]{Qin24}, giving the displayed normalization.  For even
squarefree $n$, nonvanishing can occur only in the root number $+1$ class
$n\equiv2\pmod8$, which is the even case of the cited corollary.  The
representation identities in (ii) and (iii) are the Minkowski--Siegel consequences
used in Qin's Sections~4 and~5; the difference identity in (ii) is
\cite[Lem.~3.5, Eq.~(10)]{Qin22}.
For $m=1$, the sum in (ii) is instead $r_f(1)+r_g(1)=2$; the field
$\Q(i)$ has additional units.  The difference identity in (ii) still holds
directly at $m=1$.  The families considered below have $m>1$.
\end{proof}

The Cassels pairing on $\Sel_2'(E_n)$ is alternating.  We recall the
description of its radical.

\begin{proposition}\label{prop:Cassels-nondeg}
Let $S_n=\Sel_2'(E_n)$.  The radical of its Cassels pairing is the image of
\[
\Sel_4(E_n)\longrightarrow\Sel_2(E_n)\longrightarrow S_n,
\]
where the first map is induced by multiplication by $2$ on $E_n[4]$.
Consequently the pairing is nondegenerate if and only if
\[
\rk E_n(\Q)=0,\qquad
\Sha(E_n)[2^\infty]\cong(\Z/2\Z)^{s_2(n)}.
\]
For even $s_2(n)$ this is equivalent to $\Pf(C_{\mathcal B})=1$.
\end{proposition}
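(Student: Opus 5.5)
The plan is to identify the radical via the standard interpretation of the Cassels–Tate pairing on $\Sha$, and then translate it to the Selmer group. The key input is the exact sequence
\[
0\to E_n(\Q)/2E_n(\Q)\to \Sel_2(E_n)\to \Sha(E_n)[2]\to 0 .
\]
Since $E_n(\Q)[2]\cong(\Z/2\Z)^2$ and $E_n(\Q)_{\mathrm{tors}}=E_n(\Q)[2]$ for congruent number curves, this gives $s_2(n)=\rk E_n(\Q)+\dim\Sha(E_n)[2]$. The Cassels pairing on $\Sel_2(E_n)$ is by construction the pullback of the Cassels–Tate pairing on $\Sha(E_n)[2]$, given by $\langle a,b\rangle=\langle a',2b'\rangle$ with $a,b\in \Sha[2]$ and $2b'=b$ (Cassels \cite{Cassels98}). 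So it kills the image of $E_n(\Q)/2E_n(\Q)$, and its radical in $\Sha(E_n)[2]$ is $\Sha[2]\cap(2\Sha+\Sha_{\mathrm{div}})$. This uses the fact that the Cassels–Tate pairing on $\Sha$ modulo its maximal divisible subgroup is nondegenerate; that nondegeneracy needs no finiteness assumption on $\Sha$, only that we quotient by the divisible part.

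First I show that the image of $\Sel_4(E_n)\to\Sel_2(E_n)$ equals the preimage of $\Sha[2]\cap 2\Sha[4]$ plus the image of the Mordell–Weil group. This is the standard diagram chase comparing the sequences for $2$ and $4$. A class in $\Sel_2$ lifts to $\Sel_4$ exactly when its image in $\Sha[2]$ lies in $2\Sha[4]$, after one allows modification by $E_n(\Q)/2E_n(\Q)$. The torsion $E(\Q)[4]=E(\Q)[2]$ causes no obstruction here, because the pure quotient already kills the $2$-torsion images.

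Next I compare $\Sha[2]\cap 2\Sha[4]$ with the true radical $\Sha[2]\cap(2\Sha+\Sha_{\mathrm{div}})$. This is the step I expect to be the main obstacle. An element of $\Sha[2]$ that lies in $2\Sha$ is $2x$ with $x\in\Sha[4]$, so the $2\Sha$ part is fine. The difficulty is the divisible part: a divisible element of order $2$ is also $2x$ with $4x=0$, since $\Sha_{\mathrm{div}}[2^\infty]$ is a sum of copies of $\Q_2/\Z_2$. It is therefore automatically in $2\Sha[4]$, and the two subgroups coincide. Passing to $S_n$, the image of $\Sel_4$ is exactly the radical.

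For the consequences, the pairing on $S_n$ is nondegenerate iff its radical is zero. Since $E_n(\Q)/2$ modulo torsion and the divisible part of $\Sha$ both lie in the radical, nondegeneracy first forces $\rk E_n(\Q)=0$ and $\Sha_{\mathrm{div}}[2]=0$. It then forces $\Sha[2]\cap 2\Sha=0$, which means $\Sha[2^\infty]$ has exponent $2$, so $\Sha[2^\infty]\cong(\Z/2\Z)^{s_2(n)}$. Conversely, these conditions make the image of $\Sel_4$ zero in $S_n$. Finally, for an alternating form on $\F_2^{2m}$ the determinant of any Gram matrix equals $\Pf^2=\Pf$ in $\F_2$, and a change of basis multiplies the Pfaffian by $\det P=1$. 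Hence $\Pf(C_{\mathcal B})=1$ exactly when the pairing is nondegenerate, independently of $\mathcal B$.
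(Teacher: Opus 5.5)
Your proof is correct. It differs from the paper in how it obtains the first claim.

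The paper does not derive the description of the radical. It cites Cassels's second-descent paper and Zhang's Lemma~4.1 for the fact that the kernel of the pairing on $\Sel_2(E_n)$ is the image of $\Sel_4(E_n)$. It then checks $E_n(\Q)[4]=E_n(\Q)[2]$ by an explicit halving argument at the roots $-n,0,n$. The rest of the paper's proof is the same group theory as your last paragraph: rank zero, the radical corresponding to $2\Sha[4]$, exponent at most $2$, and $\det=\Pf^2$.

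You instead start from the fact that the Cassels pairing is the pullback of the Cassels--Tate pairing along $\Sel_2(E_n)\to\Sha(E_n)[2]$. You use nondegeneracy modulo $\Sha_{\mathrm{div}}$ to identify the radical in $\Sha[2]$ with $\Sha[2]\cap 2\Sha=2\Sha[4]$. You then recover the image of $\Sel_4$ by a diagram chase for the morphism of Kummer sequences $(E[4]\xrightarrow{2}E[2],\,E\xrightarrow{2}E,\,E\xrightarrow{=}E)$. This morphism induces reduction $E(\Q)/4E(\Q)\to E(\Q)/2E(\Q)$ and multiplication by $2$ on $\Sha[4]\to\Sha[2]$, so the image of $\Sel_4$ is exactly the preimage of $2\Sha[4]$.

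Your route explains why $\Sel_4$ appears and handles a possible divisible part of $\Sha$ uniformly. The cost is that it rests on identifying Cassels's explicit quadric pairing with the Cassels--Tate pairing, which is essentially what the paper's citation supplies.

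A few points need tidying:
\begin{itemize}
\item The formula $\langle a,b\rangle=\langle a',2b'\rangle$ with $2b'=b$ is garbled. The pairing on $\Sha[2]$ is simply the restriction of the Cassels--Tate pairing, which is $\F_2$-valued by bilinearity.
\item The annihilator statement $(\Sha[2])^{\perp}=2\Sha$ should be justified. Write $\Sha[2^\infty]\cong(\Q_2/\Z_2)^r\oplus F$; cofinite generation follows from finiteness of $\Sel_2$. Then use perfect duality on the finite group $F$, where $F[2]^{\perp}=2F$.
\item Since $\Sha_{\mathrm{div}}\subseteq2\Sha$, you do not need to treat the divisible summand componentwise.
\item The remark about $E(\Q)[4]$ is beside the point for lifting. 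Mordell--Weil classes always lift to $\Sel_4$ because $E(\Q)/4E(\Q)\to E(\Q)/2E(\Q)$ is surjective. Torsion matters only for the injectivity of $E_n(\Q)[2]\to\Sel_2(E_n)$ and for the dimension count.
\end{itemize}
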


\begin{proof}
The description of the kernel follows from second descent
\cite{Cassels98}; see also \cite[Lem.~4.1]{Zhang26}.
No nonzero rational $2$-torsion point of $E_n$ is twice a rational point:
the halving criterion at the roots $-n,0,n$ would require either a negative
number to be a rational square or both $n$ and $2n$ to be squares.
Thus $E_n(\Q)[4]=E_n(\Q)[2]$, and the Kummer map embeds
$E_n(\Q)[2]$ in $\Sel_2(E_n)$.

The image of $E_n(\Q)/2E_n(\Q)$ lies in the radical, since every rational
point has a Kummer lift to $\Sel_4$.  Hence a nondegenerate pure pairing forces
rank zero.  In rank zero, $S_n\cong\Sha(E_n)[2]$, and its radical corresponds
to $2\Sha(E_n)[4]$.  This is zero precisely when the $2$-primary group has
exponent at most $2$: any element of order at least $4$ would give a nonzero
element of $2\Sha[4]$.  The group is then finite because $\Sha[2]$ is finite.
The converse follows from the same description.  Finally
$\det C_{\mathcal B}=\Pf(C_{\mathcal B})^2$, which proves the last assertion.
\end{proof}

We record the specialization of Zhang's local calculation that is needed
here.  Giving the local argument also covers $Q=1,2$, for which the
auxiliary vector condition in his Lemma~3.1 is not literally applicable.

\begin{lemma}[Local pairing formulas]\label{lem:pairing-formulas}
Let $n=\varepsilon p_1\cdots p_k$, where $\varepsilon\in\{1,2\}$,
the primes $p_i$ are distinct and $1$ modulo $8$, and
$(p_i/p_j)=1$ for $i\ne j$.  Put $m_i=n/p_i$ and
\[
\Lambda_i=(p_i,1,p_i),\qquad \Lambda_i'=(p_i,p_i,1).
\]
Choose primitive positive solutions of
\[
p_i\alpha_i^2+m_i\beta_i^2=4\gamma_i^2,\qquad
p_i{\alpha_i'}^2-m_i{\beta_i'}^2=4{\gamma_i'}^2,\qquad
p_i{\alpha_i''}^2-2m_i{\beta_i''}^2=4{\gamma_i''}^2.
\]
Such solutions exist.  The additive Cassels pairing satisfies
\begin{align}
\langle\Lambda_i,\Lambda_j\rangle
 &=\left[\frac{\gamma_i'\gamma_i''}{p_j}\right],\label{eq:zh1}\\
\langle\Lambda_i,\Lambda_i'\rangle
 &=c_i+\left[\frac{\gamma_i'}{p_i}\right],\label{eq:zh2}\\
\langle\Lambda_i,\Lambda_j'\rangle
 &=\left[\frac{\gamma_i'}{p_j}\right],\label{eq:zh3}\\
\langle\Lambda_i',\Lambda_j\rangle
 &=\left[\frac{\gamma_i}{p_j}\right]
  =\left[\frac{\gamma_j'}{p_i}\right],\label{eq:zh4}\\
\langle\Lambda_i',\Lambda_j'\rangle
 &=\left[\frac{\gamma_i\gamma_i'}{p_j}\right],\label{eq:zh5}
\end{align}
where $c_i=[(1+\sqrt2)/p_i]$ and formulas involving two indices are
understood for $i\ne j$.  In addition,
\begin{equation}\label{eq:zhdiag}
\left[\frac{\gamma_i'}{p_i}\right]
 =\left[\frac{\gamma_i}{p_i}\right].
\end{equation}
\end{lemma}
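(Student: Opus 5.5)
I will compute the pairing through its standard second-descent description, in the form used by Zhang. For a Selmer class $\Lambda$ represented by $(d_1,d_2,d_3)$, the associated intersection of quadrics $D_\Lambda$ is everywhere locally soluble. The Hasse principle for conics, applied in two stages, then gives a global point on $D_\Lambda$. Lifting that point to a quadratic form $Q_\Lambda$ over $\Q$, the additive pairing becomes
\[
\langle\Lambda,\Lambda'\rangle=\sum_v\bigl[Q_\Lambda(P_v),d'\bigr]_v ,
\]
a finite sum of local Hilbert symbols. Here $P_v$ is a local point on $D_\Lambda$ and $d'$ is the relevant coordinate of $\Lambda'$.

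\textbf{Step 1: existence and the global points.} For $\Lambda_i=(p_i,1,p_i)$ the two quadrics become, after dividing out, the conics $p_iX^2-m_iY^2=\square$ and $p_iX^2-2m_iY^2=\square$, up to the normalization by $4$. For $\Lambda_i'=(p_i,p_i,1)$ one gets $p_iX^2+m_iY^2=\square$. Local solubility at odd primes comes from $(p_i/p_j)=1$ together with $p_i\equiv1\pmod 8$, so that $-1$ and $2$ are squares modulo $p_i$. Solubility at $2$ uses $p_i\equiv1\pmod8$, and solubility at $\infty$ comes from positivity. By Hasse--Minkowski, the primitive positive solutions $(\alpha_i,\beta_i,\gamma_i)$ and their primed analogues exist. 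From them I build the explicit global point on $D_{\Lambda_i}$ or $D_{\Lambda_i'}$. The linear forms cutting out the $4$-covering, of the shape $\alpha\sqrt{p_i}\pm2\gamma$, have norms that are squares times $m_i\beta^2$. This makes the lifted quadratic form $Q_\Lambda$ explicit, with values such as $2\gamma_i'(2\gamma_i'+\alpha_i'\sqrt{p_i})$ up to squares.

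\textbf{Step 2: local evaluation.} The second argument $\Lambda_j$ or $\Lambda_j'$ has coordinates in $\{1,p_j\}$, so only the places $v\in\{2,\infty,p_j\}$, and $p_i$ when $j=i$, can contribute. At $\infty$ the positivity of the chosen solutions kills the term. At $2$ the term vanishes because $p_j\equiv1\pmod8$ makes $(\cdot,p_j)_2$ trivial. At $p_j$ with $j\ne i$, the form $Q_\Lambda$ evaluated at a local point reduces to a unit whose class is $\gamma_i'\gamma_i''$ for \eqref{eq:zh1}, $\gamma_i'$ for \eqref{eq:zh3}, $\gamma_i$ for \eqref{eq:zh4}, or $\gamma_i\gamma_i'$ for \eqref{eq:zh5}. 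Indeed, modulo $p_j$ we have $p_i\alpha^2\equiv4\gamma^2$, so the linear form is $\equiv4\gamma$ or a square multiple of it. The Hilbert symbol then equals the Legendre symbol at $p_j$.

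For the diagonal formula \eqref{eq:zh2} the place $p_i$ is ramified. There I will reduce modulo $\sqrt{p_i}$ in the conic $p_i\alpha''^2-2m_i\beta''^2=4\gamma''^2$. This forces $2m_i\beta''^2\equiv-4\gamma''^2$, so a square root of $2$ modulo $p_i$ enters. Tracking it, I expect it to produce $[(1+\sqrt2)/p_i]=c_i$, plus the term $[\gamma_i'/p_i]$. This ramified-place computation is the main obstacle, together with the case $\varepsilon=2$, where $2$ divides $n$ and the $2$-adic term needs a separate check that it vanishes. For the latter I would write out the $2$-adic points explicitly, using that $m_i\equiv\pm2,\pm1 \pmod 8$ behaves well.

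\textbf{Step 3: the auxiliary identities.} The second equality in \eqref{eq:zh4} and the identity \eqref{eq:zhdiag} follow from the symmetry of the alternating pairing, or more directly from quadratic reciprocity applied to the solutions. Concretely, $\gamma_i\gamma_j'$ both arise from factorizations of the same norm, so product formulas for Hilbert symbols over the conics give $[\gamma_i/p_j]=[\gamma_j'/p_i]$. For \eqref{eq:zhdiag} I would apply the product formula to $(\gamma\gamma',p_i)$, using the conic equations to show that every other local symbol is trivial.

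Finally, independence from the choice of solutions is automatic, because the pairing is intrinsic. Any dependence would have to cancel, and this serves as a consistency check on the computation.
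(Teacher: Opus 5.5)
Your framework is the paper's: the pairing is a sum of local Hilbert symbols of a quadratic form attached to a global point of $D_\Lambda$ (the paper uses Zhang's tangent forms), existence comes from Hasse--Minkowski, and the auxiliary identities come from symmetry and alternation. The gap is that you never write down the forms or the local points. For the one entry where something new happens, \eqref{eq:zh2}, the mechanism you propose cannot produce the answer.

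Pairing with $\Lambda_j'=(p_j,p_j,1)$ uses the product of the forms attached to the first two coordinates. For $\Lambda_i$ these are $L_1=m_i\beta_i'z-\alpha_i'u_2+2\gamma_i'u_3$, built from the minus conic, and $L_2=u_3+u_1$. The $\gamma_i''$ conic enters only through $L_3$, that is, only in pairings with unprimed classes as in \eqref{eq:zh1}. If reducing it modulo $p_i$ were the source of $c_i$, then $\gamma_i''$ would appear in \eqref{eq:zh2}.

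The factor $1+\sqrt2$ comes instead from the local point $(z,u_1,u_2,u_3)=(1,\sqrt{-2m_i},0,\sqrt{-m_i})$ of $D_{\Lambda_i}$ over $\Q_{p_i}$. There $L_2=\sqrt{-m_i}\,(1+\sqrt2)$, and the minus conic gives $L_1\equiv4\gamma_i'\sqrt{-m_i}\pmod{p_i}$ for a suitable sign of $u_3$. Hence $L_1L_2\equiv(1+\sqrt2)\gamma_i'$ modulo squares, because $-m_i$ is a square mod $p_i$. No ramification issue arises once the forms are written in $(z,u_1,u_2,u_3)$. The difficulty you saw at $p_i$ is an artifact of working with $\alpha\sqrt{p_i}\pm2\gamma$, which does not live in $\Q_{p_i}$.

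Several other steps also need repair.

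\begin{itemize}
\item Your sample value $2\gamma_i'(2\gamma_i'+\alpha_i'\sqrt{p_i})$ is congruent to $8{\gamma_i'}^2$, a square, modulo $p_j$ whenever it is a unit. It would therefore make every off-diagonal entry vanish. At the point $(0,1,\sqrt{p_i},1)$ the correct value is $L_1L_2=2(2\gamma_i'-\alpha_i'\sqrt{p_i})\equiv8\gamma_i'$, for the appropriate choice of $\sqrt{p_i}$.

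\item For \eqref{eq:zhdiag}, alternation only helps once you have computed the $p_i$-contribution to $\langle\Lambda_i',\Lambda_i'\rangle$. Use the forms $L_1=m_i\beta_iz-2\gamma_iu_2+\alpha_iu_3$ and $L_2=m_i\beta_i'z-\alpha_i'u_3+2\gamma_i'u_1$ attached to $\Lambda_i'$, evaluated at $(1,\sqrt{-m_i},\sqrt{m_i},0)$. One gets $L_1L_2\equiv\pm16m_i\sqrt{-1}\,\gamma_i\gamma_i'$. Since $p_i\equiv1\pmod8$, $\sqrt{-1}$ is a square mod $p_i$, so $0=[\gamma_i\gamma_i'/p_i]$.

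\item Your product-formula alternative does not work. The symbols $(\gamma_i\gamma_i',p_i)_\ell$ at odd $\ell\mid\gamma_i\gamma_i'$ are, by the conics, powers of $(-m_i/\ell)$ or $(m_i/\ell)$, and need not be trivial. The product formula for $(\gamma_i\gamma_i',p_i)$ is just quadratic reciprocity, so it restates \eqref{eq:zhdiag} rather than proving it.

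\item $\gamma_i$ and $\gamma_j'$ come from different norm equations, so the second equality in \eqref{eq:zh4} should rest on symmetry alone. Compare with \eqref{eq:zh3} after exchanging $i$ and $j$.

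\item The terms at $2$ and $\infty$ vanish for both values of $\varepsilon$ simply because $p_j$ is positive and a $2$-adic square. No separate dyadic check is needed, and positivity of the solutions plays no role.

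\item Vanishing at $p_l$ with $l\ne j$ uses $(p_j/p_l)=1$. At primes of good reduction it needs the good-reduction criterion for the local pairing. It does not follow merely from the coordinates of $\Lambda_j,\Lambda_j'$ lying in $\{1,p_j\}$.
\end{itemize}
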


\begin{proof}
The Monsky matrices vanish, so the displayed classes belong to the
Selmer group and their constituent conics have rational points.
Equivalently, local solubility of the three conics follows from the
quadratic residue assumptions at primes dividing $2n$; at other primes
they have good reduction, and they are soluble over $\mathbb R$.
The Hasse principle and a rational parametrization give positive
primitive integral solutions.

We use the tangent forms in the proof of
\cite[Prop.~3.2, Eqs.~(3.4)--(3.5)]{Zhang26}.
Writing the homogenizing coordinate as $z$, for $\Lambda_i$ these are
\[
\begin{aligned}
L_1&=m_i\beta_i'z-\alpha_i'u_2+2\gamma_i'u_3,\\
L_2&=u_3+u_1,\\
L_3&=2m_i\beta_i''z+2\gamma_i''u_1-\alpha_i''u_2.
\end{aligned}
\]
For $\Lambda_i'$ they are
\[
\begin{aligned}
L_1&=m_i\beta_i z-2\gamma_i u_2+\alpha_i u_3,\\
L_2&=m_i\beta_i'z-\alpha_i'u_3+2\gamma_i'u_1,\\
L_3&=u_1+u_2.
\end{aligned}
\]
Pairing with $\Lambda_j$ uses the Hilbert symbol of $L_1L_3$ and $p_j$;
pairing with $\Lambda_j'$ uses $L_1L_2$ and $p_j$.
The contributions at $2$ and $\infty$ vanish because every $p_j$ is a
square in $\Q_2$ and is positive.  At primes outside $2n$ they vanish
by the good reduction criterion for the local pairing
\cite[Lem.~2.1]{Zhang26}.  At $p_l\mid n$, $l\ne j$, they vanish
because $(p_j/p_l)=1$.

At $p_i$, one may use
\[
(z,u_1,u_2,u_3)=(1,\sqrt{-2m_i},0,\sqrt{-m_i})
\]
for $\Lambda_i$, and $(1,\sqrt{-m_i},\sqrt{m_i},0)$ for
$\Lambda_i'$.  These local square roots exist by the hypotheses.
At $p_j$, $j\ne i$, use $(0,1,\sqrt{p_i},1)$ and
$(0,1,1,\sqrt{p_i})$, respectively.  Signs may be chosen so that
the tangent values used are nonzero; the norm equations give the
following representatives up to local squares:
\[
\begin{array}{c|c|cc}
\text{source}&\text{product}&p_i&p_j\ (j\ne i)\\\hline
\Lambda_i&L_1L_2&(1+\sqrt2)\gamma_i'&\gamma_i'\\
\Lambda_i&L_1L_3&\sqrt2\gamma_i'\gamma_i''&\gamma_i'\gamma_i''\\
\Lambda_i'&L_1L_2&\gamma_i\gamma_i'&\gamma_i\gamma_i'\\
\Lambda_i'&L_1L_3&(1+\sqrt2)\gamma_i&\gamma_i
\end{array}
\]
This gives all five pairing formulas.  Alternation applied to
$\Lambda_i'$ gives $[\gamma_i\gamma_i'/p_i]=0$, which proves
\eqref{eq:zhdiag}; symmetry gives the reciprocal equality in
\eqref{eq:zh4}.  This computation uses only the displayed local
square conditions, and applies to both values of $\varepsilon$.
\end{proof}

The curves $E_n$ have complex multiplication by $\Z[i]$.  The following
two results relate their analytic rank to the algebraic conditions in
Proposition~\ref{prop:Cassels-nondeg}.

\begin{lemma}\label{lem:cm-bsd}
Let $n$ be a positive squarefree integer.  If $L(E_n,1)\ne0$, then the full Birch--Swinnerton-Dyer
formula holds for $E_n$, including the $2$-primary part.  In particular,
\[
|\Sha(E_n)|=|\Sha(E_n)|_{\mathrm{an}}<\infty.
\]
\end{lemma}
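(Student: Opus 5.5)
The plan is to deduce Lemma~\ref{lem:cm-bsd} from known results rather than prove anything new. The curve $E_n$ is the quadratic twist of $E_1\colon y^2=x^3-x$ by $n$, and it has CM by $\Z[i]$. If $L(E_n,1)\neq0$, the theorem of Coates--Wiles, or Rubin's main conjecture for imaginary quadratic fields, gives $\rk E_n(\Q)=0$ and $\Sha(E_n)$ finite. Rubin's work also gives the $p$-part of the BSD formula for every prime $p\nmid \#\mathcal O_K^\times=4$. So the remaining content is the $2$-primary part, and for this I will invoke Burungale--Flach \cite[Cor.~2]{BurungaleFlach}. Their result establishes the full BSD formula, including $p=2$, for CM elliptic curves over $\Q$ of analytic rank zero; it covers in particular the congruent number curves, via the equivariant Tamagawa number conjecture for Hecke characters of $\Q(i)$.

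The steps are as follows.
\begin{enumerate}[label=\textup{(\arabic*)}]
\item Check that $E_n$ satisfies the hypotheses of \cite[Cor.~2]{BurungaleFlach}. It is defined over $\Q$, it has CM by the maximal order $\Z[i]$, and its analytic rank is $0$ because $L(E_n,1)\neq0$. I will note explicitly that no restriction on $n$, such as odd versus even or congruence conditions, enters, since their statement applies to all CM curves over $\Q$ of analytic rank zero.
\item Conclude that $\Sha(E_n)$ is finite and that
\[
\frac{L(E_n,1)}{\Omega_{E_n}}=\frac{|\Sha(E_n)|\prod_\ell c_\ell}{|E_n(\Q)_{\mathrm{tors}}|^2}.
\]
\item Observe that $|\Sha(E_n)|_{\mathrm{an}}$ is by definition the value of $|\Sha|$ forced by this equality. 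Matching the two sides therefore gives $|\Sha(E_n)|=|\Sha(E_n)|_{\mathrm{an}}$, as claimed.
\end{enumerate}

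The main obstacle is a normalization check. Qin's analytic order in Proposition~\ref{prop:qin}(i) must be computed with the same period, Tamagawa numbers and torsion ($E_n(\Q)_{\mathrm{tors}}=E_n(\Q)[2]$ of order $4$) as in Burungale--Flach. To handle it, I would recall from \cite{Qin24} that $|\Sha|_{\mathrm{an}}$ is defined as the BSD quotient with the standard Néron period. I would then verify the Tamagawa factors directly: additive reduction at odd $p\mid n$ with $c_p=4$, together with the factor at $2$. This comparison is routine, but it is where a stray power of $2$ could enter, so it needs care.

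If one prefers to avoid \cite{BurungaleFlach} for odd $p$, Rubin's theorem handles those primes. The $2$-part can also be cross-checked against Burungale--Tian \cite{BurungaleTian}, who relate the $2$-adic valuation of the $L$-value to $\Sha[2^\infty]$ in rank zero. I would, however, rely on Burungale--Flach as the single citation for the full formula.
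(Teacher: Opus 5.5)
Your proposal is essentially the paper's proof: both rest on a single citation of \cite[Cor.~2]{BurungaleFlach}. The one step to make explicit is that this corollary descends a theorem proved over the CM field $K=\Q(i)$ to $\Q$. Its hypotheses are therefore that the CM of $E_n$ is defined over $K$, that $K(E_{n,\mathrm{tors}})/K$ is abelian, and that $L(E_n/K,1)\neq0$. The last does not follow from ``analytic rank zero over $\Q$'' automatically. You need the observation, which the paper makes, that the $(-1)$-twist of $E_n$ is isomorphic to $E_n$, so that $L(E_n/K,s)=L(E_n/\Q,s)^2$.
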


\begin{proof}
Apply \cite[Cor.~2]{BurungaleFlach}, which descends the theorem over a CM
field to the fixed field of complex conjugation.  Here $F=K=\Q(i)$ and
$F^+=\Q$, and all geometric endomorphisms of $E_n$ are defined over $K$.
The extension $K(E_{n,\mathrm{tors}})/K$ is abelian by complex multiplication.
The $(-1)$-twist of $E_n$ is $\Q$-isomorphic to $E_n$, so
\[
L(E_n/K,s)=L(E_n/\Q,s)^2.
\]
Thus the assumed nonvanishing over $\Q$ gives the nonvanishing required over
$K$, and the cited corollary yields BSD over $\Q$, including its $2$-part.
\end{proof}

\begin{lemma}\label{lem:rank-converse}
If $\corank_{\Z_2}\Sel_{2^\infty}(E_n/\Q)=0$, then $L(E_n,1)\ne0$.
\end{lemma}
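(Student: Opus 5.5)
This is a $2$-converse theorem, and the plan is to deduce it directly from Burungale--Tian \cite{BurungaleTian}, verifying their hypotheses for $E_n$. They show that for CM elliptic curves over $\Q$ (here CM by $\Z[i]$) the vanishing of the $\Z_2$-corank of the $2^\infty$-Selmer group implies $\operatorname{ord}_{s=1}L(E,s)=0$. I would therefore first recall the precise form of the cited statement. Then I would check that $E_n$, as a quadratic twist of $y^2=x^3-x$ of conductor $32$ or a power of $2$ times $n^2$, lies within its scope. In particular, I would confirm that the prime $2$, which ramifies in $\Q(i)$ and is a prime of additive reduction, is allowed in their theorem.

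If the cited result is phrased over $K=\Q(i)$, I would transfer it as follows. The $(-1)$-twist of $E_n$ is $\Q$-isomorphic to $E_n$, as already used in Lemma~\ref{lem:cm-bsd}. So restriction and corestriction give
\[
\Sel_{2^\infty}(E_n/K)\sim\Sel_{2^\infty}(E_n/\Q)\oplus\Sel_{2^\infty}(E_n^{(-1)}/\Q),
\]
up to groups of bounded exponent; here $E_n^{(-1)}\cong E_n$. Hence the $\Z_2$-corank over $K$ is twice the corank over $\Q$, and it vanishes. Their theorem then yields $L(E_n/K,1)\ne0$, and the factorization $L(E_n/K,s)=L(E_n/\Q,s)^2$ gives $L(E_n,1)\ne0$.

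A careful version of the corank comparison would use inflation--restriction for the quadratic extension $K/\Q$. The kernels and cokernels are killed by $2$ and are finite, since $H^1(\mathrm{Gal}(K/\Q),E_n[2^\infty](K))$ is finite. Finite groups do not affect coranks.

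The main obstacle is confirming that the hypotheses of \cite{BurungaleTian} genuinely cover the whole family $E_n$ at the residual prime $p=2$. This is the case excluded in most $p$-converse theorems because $2$ ramifies in $\Q(i)$. If their result were stated only for specific conductors or under a root-number condition, I would reduce to it as follows. The corank-zero hypothesis, combined with the $2$-parity conjecture (known for elliptic curves over $\Q$ by Dokchitser--Dokchitser), forces root number $+1$. That places $n$ in the residue classes where their theorem applies, and the statement follows.
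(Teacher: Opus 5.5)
Your proposal is correct and takes the same approach as the paper, which simply invokes the $p=2$ case of Burungale--Tian's rank zero converse (their Theorem~1.1). That theorem is stated over $\Q$ for every CM elliptic curve and every prime $p$, including $p=2$, so your contingency steps are unnecessary, though harmless: the transfer from $K=\Q(i)$ by restriction--corestriction, and the reduction to root number $+1$ via $2$-parity.
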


\begin{proof}
This is the $p=2$ case of the rank zero converse theorem of Burungale and Tian
\cite[Thm.~1.1]{BurungaleTian}, which applies to every CM elliptic curve over $\Q$.
\end{proof}

\begin{proof}[Proof of Theorem~\ref{thm:main-principle}]
Assume first that $L(E_n,1)\ne0$.  By Lemma~\ref{lem:cm-bsd} and
Proposition~\ref{prop:qin},
\begin{equation}\label{eq:valuation}
\left|\frac{\Delta_Q(n)}{2\sigma_Q(n)}\right|^2=|\Sha(E_n)|.
\end{equation}
In particular $E_n(\Q)$ has rank zero, so
\[
s_2(n)=\dim_{\F_2}\Sha(E_n)[2].
\]
For a finite abelian $2$-group $G$ one has
$v_2(|G|)\ge\dim_{\F_2}G[2]$, with equality if and only if $G$ is elementary abelian.
Taking $2$-adic valuations in \eqref{eq:valuation} therefore gives
\[
v_2\!\left(\frac{\Delta_Q(n)}{2\sigma_Q(n)}\right)
 =\frac12v_2\bigl(|\Sha(E_n)|\bigr)
 \ge\frac{s_2(n)}2.
\]
Since $n$ is squarefree, $\sigma_Q(n)$ is a power of $2$; hence the valuation
inequality shows that the quotient in \eqref{eq:DQ} is integral.  Its residue modulo
$2$ is $1$ if and only if
\[
\Sha(E_n)[2^\infty]\cong(\Z/2\Z)^{s_2(n)},
\]
which, by Proposition~\ref{prop:Cassels-nondeg}, is equivalent to nondegeneracy of the
Cassels pairing and hence to $\Pf(C_{\mathcal B})=1$.

Now suppose that $L(E_n,1)=0$.  Proposition~\ref{prop:qin} gives $\Delta_Q(n)=0$, so the
left side of \eqref{eq:main-principle} is $0$.  If
$\Pf(C_{\mathcal B})=1$, then Proposition~\ref{prop:Cassels-nondeg} would imply
$\rk E_n(\Q)=0$ and finite $\Sha(E_n)[2^\infty]$.  The exact sequence for the
$2^\infty$-Selmer group would then give
$\corank_{\Z_2}\Sel_{2^\infty}(E_n/\Q)=0$, contradicting
Lemma~\ref{lem:rank-converse}.  Hence the Pfaffian is also $0$.

Finally, under a change of basis $P\in\mathrm{GL}_{s_2(n)}(\F_2)$,
\[
\Pf(P^{T}C_{\mathcal B}P)=\det(P)\Pf(C_{\mathcal B}).
\]
Since every nonzero element of $\F_2$ is $1$, $\det(P)=1$.  Thus the Pfaffian is basis
independent over $\F_2$.
\end{proof}

\begin{theorem}[Divisibility from the radical]\label{thm:radical-depth}
Let $s=s_2(n)$ be even, let $d=\dim_{\F_2}\rad C_{\mathcal B}(E_n)$, and put
$H_n=\Delta_Q(n)/(2\sigma_Q(n))$.  Then
\begin{equation}\label{eq:radical-bound}
v_2(H_n)\geq\frac{s+d}{2}=s-\frac{\rk C_{\mathcal B}(E_n)}2.
\end{equation}
Equality holds if and only if
\begin{equation}\label{eq:radical-equality}
\rk E_n(\Q)=0,\qquad
\Sha(E_n)[2^\infty]\cong
(\Z/2\Z)^{s-d}\oplus(\Z/4\Z)^d.
\end{equation}
If $H_n\ne0$, the pairing rank and $v_2(H_n)$ determine the number of cyclic
factors of order $2$, and the total excess of the remaining exponents over $2$.
\end{theorem}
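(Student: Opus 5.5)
The argument follows the proof of Theorem~\ref{thm:main-principle}, refined by the radical description in Proposition~\ref{prop:Cassels-nondeg}. I would first dispose of the case $L(E_n,1)=0$. Proposition~\ref{prop:qin}(i) gives $\Delta_Q(n)=0$, so $v_2(H_n)=+\infty$ and the bound holds strictly. Equality cannot hold, and \eqref{eq:radical-equality} must then be shown to fail. If it held, $E_n$ would have rank zero and finite $\Sha(E_n)[2^\infty]$. Then $\corank_{\Z_2}\Sel_{2^\infty}(E_n/\Q)=0$, and Lemma~\ref{lem:rank-converse} gives $L(E_n,1)\ne0$, a contradiction.

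So assume $L(E_n,1)\ne0$. By Lemma~\ref{lem:cm-bsd} and Proposition~\ref{prop:qin}(i) we have $H_n^2=|\Sha(E_n)|$ and $\rk E_n(\Q)=0$. Write $G=\Sha(E_n)[2^\infty]\cong\bigoplus_i\Z/2^{a_i}\Z$ with all $a_i\ge1$. Then $v_2(H_n)=\frac12\sum_i a_i$. Since the rank is zero, $S_n\cong\Sha(E_n)[2]=G[2]$, so $s$ equals the number of cyclic factors. By Proposition~\ref{prop:Cassels-nondeg}, the radical corresponds to $2G[4]=G[2]\cap 2G$. This group is spanned by the order-two elements of the factors with $a_i\ge2$, so $d=\#\{i:a_i\ge2\}$. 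Hence
\[
\sum_i a_i\ \ge\ (s-d)\cdot1+d\cdot 2 = s+d,
\]
with equality exactly when every $a_i\le2$, that is, $G\cong(\Z/2\Z)^{s-d}\oplus(\Z/4\Z)^d$. The identity $s+d=2s-\rk C_{\mathcal B}$ follows from $\rk C_{\mathcal B}=s-d$. For the converse direction of the equality statement, note that \eqref{eq:radical-equality} together with Lemma~\ref{lem:rank-converse} forces $L(E_n,1)\ne0$, which puts us in the case just treated. The main point to check carefully is the identification of the radical with $2\Sha[4]$ inside $\Sha[2]$ in rank zero. Proposition~\ref{prop:Cassels-nondeg} states this, but I would recheck that the image of $\Sel_4\to\Sel_2$ really equals $G[2]\cap 2G$ and not something smaller.

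For the final sentence, take $H_n\ne0$. Then $L(E_n,1)\ne0$, since otherwise $H_n=0$, and the structure above holds. The pairing rank equals $s-d$, which is the number of factors with $a_i=1$. The total excess $\sum_{a_i\ge2}(a_i-2)$ equals $2v_2(H_n)-(s+d)$. Since $s$ is known, this quantity is determined by the rank and $v_2(H_n)$.
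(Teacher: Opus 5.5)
Your proof is correct and is essentially the paper's argument. The paper splits on $H_n=0$ versus $H_n\neq0$ rather than on $L(E_n,1)$; by Proposition~\ref{prop:qin} and Lemma~\ref{lem:cm-bsd} these cases coincide. It then uses the same count $d=\#\{j:a_j\ge2\}$ via $2G[4]$, and Lemma~\ref{lem:rank-converse} for the converse. The identification you flag is fine: in rank zero the image of $\Sel_4$ in $\Sel_2$ contains the Kummer image of $E_n(\Q)$ and maps onto $2\Sha(E_n)[4]$, since the induced map $\Sha[4]\to\Sha[2]$ is multiplication by $2$.
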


\begin{proof}
If $H_n=0$, the bound is immediate.  Otherwise Qin's formula for the central value gives $L(E_n,1)\ne0$.  Lemma~\ref{lem:cm-bsd} gives rank zero, finite
$G=\Sha(E_n)[2^\infty]$, and $2v_2(H_n)=\log_2|G|$.
Write $G\cong\bigoplus_{j=1}^s\Z/2^{a_j}\Z$ with $a_j\geq1$.
By the radical description in Proposition~\ref{prop:Cassels-nondeg},
\[
d=\dim_{\F_2}2G[4]=\#\{j:a_j\geq2\}.
\]
Consequently
\begin{equation}\label{eq:excess-depth}
2v_2(H_n)=s+d+\sum_{a_j\geq2}(a_j-2).
\end{equation}
This proves the bound and its equality characterization in the nonvanishing
case.  Conversely, \eqref{eq:radical-equality} gives finite
$\Sel_{2^\infty}(E_n)$; Lemma~\ref{lem:rank-converse} then gives
$L(E_n,1)\ne0$, so the preceding argument applies.  Finally the number of factors of order $2$ is $s-d=\rk C_{\mathcal B}$, and the last sum is
$2v_2(H_n)-s-d$.
\end{proof}

\begin{corollary}[Two cases of the group structure]\label{cor:next-groups}
Retain the notation of Theorem~\ref{thm:radical-depth}.
\begin{enumerate}[label=\textup{(\roman*)}]
\item If $s\geq2$ and $v_2(H_n)=s/2+1$, then
\[
\rk E_n(\Q)=0,\qquad
\Sha(E_n)[2^\infty]\cong(\Z/2\Z)^{s-2}\oplus(\Z/4\Z)^2.
\]
\item If $d\geq2$ and $v_2(H_n)=(s+d)/2+1$, then
\[
\rk E_n(\Q)=0,\qquad
\Sha(E_n)[2^\infty]\cong
(\Z/2\Z)^{s-d}\oplus(\Z/4\Z)^{d-2}\oplus(\Z/8\Z)^2.
\]
\end{enumerate}
\end{corollary}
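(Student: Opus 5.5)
Both parts follow from the identity \eqref{eq:excess-depth} in the proof of Theorem~\ref{thm:radical-depth}, applied once we know $H_n\ne0$. In each case the hypothesis gives a finite value of $v_2(H_n)$, so $H_n\ne0$. As in that proof, Qin's formula (Proposition~\ref{prop:qin}(i)) then gives $L(E_n,1)\ne0$, and Lemma~\ref{lem:cm-bsd} gives rank zero and finiteness of $G=\Sha(E_n)[2^\infty]$. We may therefore write $G\cong\bigoplus_{j=1}^s\Z/2^{a_j}\Z$ with $a_j\ge1$, where $s=s_2(n)=\dim_{\F_2}\Sha(E_n)[2]$. By Proposition~\ref{prop:Cassels-nondeg}, $d=\#\{j:a_j\ge2\}$, and
\[
2v_2(H_n)=s+d+\sum_{a_j\ge2}(a_j-2).
\]
The remaining work is bookkeeping with the integers $a_j$.

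For (ii), the hypothesis $v_2(H_n)=(s+d)/2+1$ gives $\sum_{a_j\ge2}(a_j-2)=2$. The excesses are nonnegative integers summing to $2$. Either one index has $a_j=4$ and the others in the radical have $a_j=2$, or exactly two indices have $a_j=3$. To exclude the first option we need an extra constraint. Rank zero makes $\Sha(E_n)$ finite, and then the Cassels--Tate pairing on $\Sha(E_n)[2^\infty]$ is nondegenerate and alternating, so $G\cong H\oplus H$ for some $H$. Hence every cyclic order occurs with even multiplicity. A single factor $\Z/16\Z$ is therefore impossible, and two factors $\Z/8\Z$ remain. This gives $(\Z/2\Z)^{s-d}\oplus(\Z/4\Z)^{d-2}\oplus(\Z/8\Z)^2$, which requires $d\ge2$, as assumed.

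For (i), the hypothesis $v_2(H_n)=s/2+1$ gives $d+\sum_{a_j\ge2}(a_j-2)=2$. The options are $d=2$ with both excesses $0$, giving $(\Z/2\Z)^{s-2}\oplus(\Z/4\Z)^2$; or $d=1$ with a single factor $\Z/8\Z$. The latter gives $G\cong(\Z/2\Z)^{s-1}\oplus\Z/8\Z$, which has odd multiplicity of order $8$ and is excluded by the square structure. So $d=2$, and $s\ge2$ is needed for consistency. The case $d=0$ would force $2v_2(H_n)=s$, contradicting the hypothesis.

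The only nontrivial input beyond Theorem~\ref{thm:radical-depth} is the fact that a finite $\Sha$ has square $2$-primary part. I would cite Cassels \cite{Cassels98}: the pairing is alternating for elliptic curves, since the Cassels--Tate pairing is alternating for principally polarized abelian varieties coming from elliptic curves. I would state this explicitly, because $d$ and $v_2(H_n)$ alone do not separate the two candidate groups in each part.
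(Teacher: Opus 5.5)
Your proposal is correct and takes essentially the paper's approach. A finite $v_2(H_n)$ gives $H_n\ne0$, hence rank zero and finite $G$ via Qin's formula and Lemma~\ref{lem:cm-bsd}; the excess identity \eqref{eq:excess-depth}, together with the equal pairing of elementary divisors forced by the perfect alternating Cassels--Tate pairing, then determines $G$. The only cosmetic difference is that in (i) the paper counts $\sum_j(a_j-1)=2$ directly, while you split into cases by $d$.
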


\begin{proof}
The nonzero defect implies analytic rank zero.  Put
$G=\Sha(E_n)[2^\infty]$ and write
$G\cong\bigoplus_j\Z/2^{a_j}\Z$ as in the proof of
Theorem~\ref{thm:radical-depth}.  The finite Cassels--Tate pairing is perfect
and alternating, so the elementary divisors of $G$ occur in equal pairs
\cite{Cassels}.  In (i),
$\sum_j(a_j-1)=2$, which allows exactly one pair with exponent $2$.
In (ii), the sum in \eqref{eq:excess-depth} is $2$, so precisely one pair of
the $d$ exponents is $3$, and the others are $2$.
\end{proof}

\begin{remark}\label{rem:s2two}
When $s_2(n)=2$, the Pfaffian in Theorem~\ref{thm:main-principle} is a single
pairing value.  When $s_2(n)=4$, it is a sum of three products of pairing values.
\end{remark}

\section{Products of two primes}\label{sec:four}

We first determine when $s_2(pq)$ or $s_2(2pq)$ equals $4$.

\begin{proposition}\label{prop:rank4-classification}
Let $p\ne q$ be odd primes and let $n\in\{pq,2pq\}$.  Then
\[
s_2(n)=4
\]
if and only if
\[
p\equiv q\equiv1\pmod8,
\qquad
\left(\frac qp\right)=1.
\]
Otherwise, $s_2(n)\le3$, and equality can occur.
\end{proposition}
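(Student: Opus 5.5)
The plan is to use Monsky's formula $s_2(n)=\corank_{\F_2}M_n$ together with the explicit block forms \eqref{eq:Monsky} and \eqref{eq:Monsky-even}. Here $k=2$, so the Monsky matrices are $4\times4$ over $\F_2$. Hence $s_2(n)=4$ holds exactly when $M_n=0$ (respectively $M_{2n}=0$). The problem therefore reduces to deciding when all entries of a $4\times 4$ matrix vanish, followed by a separate check of the rank bound.

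\textbf{Step 1: the criterion $M=0$.} Write $n=pq$ and look at the off-diagonal blocks. In $M_{pq}$ the block $D_{n,2}$ vanishes iff $(2/p)=(2/q)=1$, i.e.\ $p,q\equiv\pm1\pmod 8$. With $D_{n,2}=0$, vanishing of the diagonal blocks requires $A_n=0$ and then $D_{n,-2}=0$. Together these give $(-1/p)=(-1/q)=1$, so $p\equiv q\equiv1\pmod8$. The condition $A_n=0$ then means $[q/p]=[p/q]=0$, and by reciprocity this is the single condition $(q/p)=1$. For $M_{2pq}$ the off-diagonal blocks $D_{n,-1}$ and $D_{n,2}$ force $p\equiv q\equiv 1\pmod 8$ directly. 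The diagonal blocks then reduce to $A_n^T=A_n=0$, which is again $(q/p)=1$. The converse, that these conditions make both matrices zero, is immediate from the same entries.

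\textbf{Step 2: the bound $s_2\le3$ otherwise.} If the hypotheses fail, then $M_n\ne0$, so $\rk M_n\ge1$ and $s_2(n)=4-\rk M_n\le3$.

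\textbf{Step 3: sharpness of the bound.} It remains to show that $s_2(n)=3$ actually occurs, which needs explicit rank-one Monsky matrices. For $n=pq$ I would try $p\equiv q\equiv1\pmod8$ with $(q/p)=-1$. Then $D_{n,\pm2}=0$, so $M_n$ is $A_n$ placed twice on the diagonal. Here $A_n$ has all four entries equal to $1$, hence rank $1$, and $M_n$ has rank $2$; this case gives only $s_2=2$.

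I would therefore instead try $p\equiv1$, $q\equiv 7\pmod 8$ (or $p\equiv q\equiv 7$) with suitable residue symbols, and compute ranks from the explicit formula. For $n=2pq$ I would look for a choice making only one block nonzero with rank one. If no single family works, I would cite numerical examples and verify each by computing the $4\times4$ rank directly.

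I expect this existence part (``equality can occur'') to be the only real obstacle. It requires a small case search over residue classes mod $8$ and Legendre symbols, done separately for $pq$ and $2pq$, to exhibit rank exactly one. The characterization in Steps 1 and 2 itself is routine bookkeeping.
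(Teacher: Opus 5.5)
Your Steps 1 and 2 are the paper's argument essentially verbatim. The Monsky matrices are $4\times4$, so $s_2(n)=4$ is equivalent to $M_{pq}=0$ (resp.\ $M_{2pq}=0$). The blocks then force $p\equiv q\equiv1\pmod{8}$ and $A_{pq}=0$, and a nonzero matrix has corank at most $3$.

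The gap is the final clause. ``Equality can occur'' is an existence statement, and your Step 3 only describes a search without exhibiting any $n$ with $s_2(n)=3$, so that part of the proposition is not proved. Part of your proposed search space is also empty. For $p\equiv q\equiv7\pmod{8}$ one has $D_{pq,2}=0$ and $D_{pq,-2}=D_{pq,-1}=I$. The matrix $A_{pq}$ has rank one, since its rows are $[q/p](1,1)$ and $(1+[q/p])(1,1)$. This makes both $M_{pq}$ and $M_{2pq}$ of rank $2$, so $s_2=2$. That is consistent with $2$-Selmer parity: $pq\equiv1$ and $2pq\equiv2\pmod{8}$ lie in root number $+1$ classes, so $s_2$ is even there. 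An odd value such as $3$ should be sought only among $n\equiv5,6,7\pmod{8}$.

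Your first suggested family does close the gap, and it handles both values of $n$ at once. Take $p\equiv1\pmod{8}$ and $q\equiv7\pmod{8}$ with $(q/p)=1$.
\begin{itemize}
\item Since $p\equiv1\pmod{4}$, reciprocity gives $(p/q)=1$ as well, so $A_{pq}=0$.
\item Also $D_{pq,2}=0$ and $D_{pq,-2}=D_{pq,-1}=\operatorname{diag}(0,1)$.
\item Hence $M_{pq}$ has its only nonzero entry at position $(4,4)$, and $M_{2pq}$ has its only nonzero entry at position $(2,4)$.
\end{itemize}
Both matrices have rank $1$, so $s_2(pq)=s_2(2pq)=3$. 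For instance $p=17$, $q=47$ works, since $47\equiv13\equiv8^2\pmod{17}$.

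The paper instead cites $p=5$, $q=41$ (Remark~\ref{rem:s2three}), for which $M_{205}$ has rank one. The same formulas give $M_{410}$ rank $2$, so that example witnesses equality only for $n=pq$, not for $n=2pq$.
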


\begin{proof}
The matrices in \eqref{eq:Monsky} and \eqref{eq:Monsky-even} have size $4\times4$.
Hence $s_2(n)=4$ is equivalent to the corresponding matrix being zero.

For $n=pq$, the vanishing of the off-diagonal block forces
$D_{pq,2}=0$, the upper left block then forces $A_{pq}=0$, and the lower right block
forces $D_{pq,-2}=0$.  The two diagonal conditions together are equivalent to
$p\equiv q\equiv1\pmod8$, while $A_{pq}=0$ is equivalent to
$(q/p)=(p/q)=1$.  Under $p\equiv q\equiv1\pmod4$, quadratic reciprocity identifies the
two symbols.

For $n=2pq$, vanishing of \eqref{eq:Monsky-even} is equivalent to
$D_{pq,2}=D_{pq,-1}=0$ and $A_{pq}=0$.  Thus
$p\equiv q\equiv1\pmod8$ and $(q/p)=1$.  The converse is immediate in both cases.
If the matrix is nonzero its corank is at most $3$.
\end{proof}

\begin{remark}\label{rem:s2three}
The bound in Proposition~\ref{prop:rank4-classification} is sharp.  For $p=5$ and
$q=41$ one has $(q/p)=1$ and
\[
M_{205}=
\begin{pmatrix}
1&0&1&0\\
0&0&0&0\\
1&0&1&0\\
0&0&0&0
\end{pmatrix},
\]
so $s_2(205)=3$.
\end{remark}

Under \eqref{eq:layer}, both class numbers below are divisible by $16$.

\begin{proposition}\label{prop:class-vacuous}
Assume \eqref{eq:layer}.  Then
\[
16\mid h(-pq),\qquad 16\mid h(-2pq).
\]
In particular,
\[
h(-2pq)\equiv h(-pq)\pmod{16}.
\]
Thus the class number conditions modulo $16$ in
\cite[Thm.~4.9(ii), Thm.~5.4(1)]{Qin22} hold for every pair satisfying
\eqref{eq:layer}.
\end{proposition}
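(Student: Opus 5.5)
We will prove both divisibilities by genus theory, reading the $2$-power divisibility of the class number off the $4$-, $8$- and $16$-rank structure of the narrow class group, as measured by Rédei matrices. Start with $K=\Q(\sqrt{-pq})$. Since $-pq\equiv 7\pmod 8$, the discriminant is $-4pq$, and the ramified primes are $2,p,q$. So $\mathrm{Cl}(K)[2]$ has rank $2$. The strategy is to show the $2$-part of $\mathrm{Cl}(K)$ is either a product of two cyclic groups each of order at least $4$, or cyclic of order at least $16$. In both cases $16\mid h(-pq)$.

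First compute the $4$-rank with Rédei's matrix. The entries are the Legendre/Kronecker symbols among the prime discriminants $-4$ (or $8$ and $-8$), $p$ and $q$. Because $p\equiv q\equiv1\pmod8$, the primes $p,q$ split completely in $\Q(\zeta_8)$. Because $(q/p)=(p/q)=1$, all symbols $(p/q)$, $(-1/p)$, $(2/p)$, etc., are trivial. Hence the Rédei matrix vanishes and the $4$-rank equals the $2$-rank, which is $2$. So $\mathrm{Cl}(K)[2^\infty]\cong \Z/2^a\oplus\Z/2^b$ with $a,b\ge2$, which already gives $16\mid h(-pq)$. The identical argument applies to $\Q(\sqrt{-2pq})$, whose discriminant is $-8pq$ with ramified primes $2,p,q$. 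The relevant prime discriminants are $-8$ (or $8$), $p$ and $q$, and the same vanishing of symbols gives $4$-rank $2$, hence $16\mid h(-2pq)$.

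The congruence $h(-2pq)\equiv h(-pq)\pmod{16}$ then follows trivially, since both sides are $0$ modulo $16$. The final sentence of the proposition is just a restatement: the mod-$16$ class number hypotheses in the cited theorems of Qin are of exactly this form, so they are automatically satisfied under \eqref{eq:layer}.

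The only step needing care is the $4$-rank computation. One has to use the correct decomposition of the discriminant into prime discriminants ($-4\cdot p\cdot q$ versus $-8\cdot p\cdot q$) and check every Rédei matrix entry, including those involving the dyadic discriminant, where $p,q\equiv1\pmod8$ is exactly what makes $(2/p)=(-1/p)=1$ and likewise for $q$. As a cross-check, I would verify that in each case every genus character $\chi$ satisfies $\chi(\mathfrak l)=1$ for each ramified prime $\mathfrak l$. That confirms the ambiguous classes are squares. Since the $2$-rank is $2$, this gives $2$-part of order at least $16$, which is all we need.
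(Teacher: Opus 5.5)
Your proof is correct and follows essentially the same route as the paper: genus theory gives $2$-rank $2$ for the discriminants $-4pq$ and $-8pq$, and the vanishing of the R\'edei matrix gives $4$-rank $2$, so the $2$-primary class group has two cyclic factors of order at least $4$. The only difference is that you check the R\'edei matrix entry by entry, using $p\equiv q\equiv1\pmod 8$ for the dyadic symbols and $(q/p)=1$ for the mutual symbol, whereas the paper cites Zhang's formula $h_4(-m)=h_4(-2m)=\corank A_m$ with $A_{pq}=0$.
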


\begin{proof}
Put $m=pq$.  Since $p,q\equiv1\pmod8$ and $(q/p)=1$, the matrix $A_m$ is the zero
$2\times2$ matrix.  Zhang's class group formula
\cite[Thm.~2.8(1)]{Zhang26} therefore gives
\[
h_4(-m)=h_4(-2m)=\corank A_m=2,
\]
where $h_4$ denotes the $4$-rank of the corresponding narrow class group in Zhang's
notation; for the negative discriminants here this is the ordinary class group.  Genus
theory gives $2$-rank $2$ for both discriminants
$-4pq$ and $-8pq$, since each is a product of three prime discriminants.  Thus the
$2$-primary class group has two cyclic factors, both of order at least $4$.  Its order is
therefore divisible by $4^2=16$ in both cases:
\[
16\mid h(-pq),\qquad 16\mid h(-2pq).
\]
The final congruence follows.
\end{proof}

Assume \eqref{eq:layer}.  Since $M_{pq}=0$, the pure Selmer group of
$E_{pq}$ has the ordered basis
\begin{equation}\label{eq:basis4}
\mathcal B=(\Lambda_p,\Lambda_q,\Lambda_p',\Lambda_q')
 =\bigl((p,1,p),(q,1,q),(p,p,1),(q,q,1)\bigr).
\end{equation}

Choose primitive positive solutions as in \eqref{eq:symbols-intro}.
The value of
$c_p$ is independent of the choice of $r_p$: replacing $r_p$ by $-r_p$ replaces
$r_p+1$ by $1-r_p$, and $(1+r_p)(1-r_p)\equiv-1\pmod p$ with $(-1/p)=1$.
The same argument applies to $c_q$.

\begin{proposition}\label{prop:matrix4}
With respect to the basis \eqref{eq:basis4}, the Cassels matrix is
\begin{equation}\label{eq:C4}
C_{pq}=
\begin{pmatrix}
0&\delta&v+c_p&v\\
\delta&0&u&u+c_q\\
v+c_p&u&0&u+v\\
v&u+c_q&u+v&0
\end{pmatrix}.
\end{equation}
Consequently,
\begin{equation}\label{eq:pf4}
\Pf(C_{pq})
 =c_pc_q+c_pu+c_qv+\delta(u+v)
 =\Pi(p,q).
\end{equation}
\end{proposition}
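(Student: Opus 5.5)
The plan is to specialize Lemma~\ref{lem:pairing-formulas} to $\varepsilon=1$, $k=2$, $(p_1,p_2)=(p,q)$, so that $m_p=q$ and $m_q=p$. Each entry of \eqref{eq:C4} should then follow from one of \eqref{eq:zh1}--\eqref{eq:zhdiag}, together with the fact that the Cassels pairing is alternating. Over $\F_2$, alternation means the Gram matrix is symmetric with zero diagonal, so it suffices to compute the six entries above the diagonal. Each entry may be computed from whichever of the two orderings the Lemma handles more conveniently.

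\textbf{Matching the auxiliary solutions.} For the index $p$, the Lemma's equations are exactly the three norm equations used to define $u,v,\delta$, so I take $\gamma_p=\gamma$, $\gamma_p'=\gamma'$ and $\gamma_p''=\gamma''$. For the index $q$, the first equation is $q\alpha_q^2+p\beta_q^2=4\gamma_q^2$. This is the same equation with the roles of $\alpha$ and $\beta$ swapped, so the same primitive positive triple works and I may take $\gamma_q=\gamma$. The other auxiliary solutions for $q$ (namely $\gamma_q'$ and $\gamma_q''$) should never be needed, provided each entry is read off from the appropriate ordering.

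\textbf{Reading off the entries.}
\begin{itemize}
\item By \eqref{eq:zh1}, $\langle\Lambda_p,\Lambda_q\rangle=[\gamma'\gamma''/q]=\delta$.
\item By \eqref{eq:zh2} and \eqref{eq:zhdiag}, $\langle\Lambda_p,\Lambda_p'\rangle=c_p+[\gamma'/p]=c_p+[\gamma/p]=c_p+v$.
\item By \eqref{eq:zh2} and \eqref{eq:zhdiag} for the index $q$, $\langle\Lambda_q,\Lambda_q'\rangle=c_q+[\gamma_q/q]=c_q+u$.
\item By \eqref{eq:zh4} and symmetry, $\langle\Lambda_p,\Lambda_q'\rangle=\langle\Lambda_q',\Lambda_p\rangle=[\gamma_q/p]=v$.
\item By \eqref{eq:zh4} and symmetry, $\langle\Lambda_q,\Lambda_p'\rangle=\langle\Lambda_p',\Lambda_q\rangle=[\gamma_p/q]=u$.
\item By \eqref{eq:zh5}, $\langle\Lambda_p',\Lambda_q'\rangle=[\gamma\gamma'/q]=u+[\gamma'/q]$. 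By \eqref{eq:zh3}, $[\gamma'/q]=\langle\Lambda_p,\Lambda_q'\rangle$, which was just shown to equal $v$. Hence this entry is $u+v$.
\end{itemize}
This gives \eqref{eq:C4}. For a $4\times4$ alternating matrix, $\Pf=a_{12}a_{34}+a_{13}a_{24}+a_{14}a_{23}$. Here that is
\[
\delta(u+v)+(v+c_p)(u+c_q)+vu,
\]
and the two $uv$ terms cancel, leaving $c_pc_q+c_pu+c_qv+\delta(u+v)=\Pi(p,q)$.

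\textbf{Main obstacle.} The delicate point is the bookkeeping of orderings and auxiliary solutions. One must check that the Lemma allows $\gamma_q$ to be taken equal to $\gamma$; this holds because the Lemma requires only a primitive positive solution. One must also check that the entries $[\gamma'/q]$ and $[\gamma_q/p]$ are expressed through $u$ and $v$ using the reciprocal identity in \eqref{eq:zh4}. I would first try to route every entry through the index-$p$ data and the common $\gamma$, as above, invoking \eqref{eq:zh4} wherever a $q$-indexed quantity appears. Finally, the pairing and the Pfaffian (basis-independent over $\F_2$, as shown in the proof of Theorem~\ref{thm:main-principle}) are intrinsic. Hence the computed value does not depend on the auxiliary choices, even though the individual symbols might a priori depend on them.
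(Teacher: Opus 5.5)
Your proposal is correct and follows essentially the same route as the paper. Both arguments specialize Lemma~\ref{lem:pairing-formulas} to $(p_1,p_2)=(p,q)$, reuse the plus-equation solution so that $\gamma_q=\gamma$, rewrite the $q$-indexed symbols through \eqref{eq:zh4} and \eqref{eq:zhdiag}, and expand the $4\times4$ Pfaffian as $a_{12}a_{34}+a_{13}a_{24}+a_{14}a_{23}$; your entry-by-entry bookkeeping, such as obtaining $[\gamma'/q]=v$ from \eqref{eq:zh3} and the reciprocal form of \eqref{eq:zh4}, spells out what the paper compresses into a single sentence.
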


\begin{proof}
Apply Lemma~\ref{lem:pairing-formulas} with $n=pq$ and
$(p_1,p_2)=(p,q)$.  For $p_1=p$ the three norm equations are the equations
used to define $\gamma,\gamma',\gamma''$.  For $p_2=q$, the plus equation is the same
after interchanging the first two variables, so its third coordinate may be chosen to
be $\gamma$.

The entries are therefore
\begin{align*}
\langle\Lambda_p,\Lambda_q\rangle&=\delta,\\
\langle\Lambda_p,\Lambda_p'\rangle&=c_p+v,\\
\langle\Lambda_p,\Lambda_q'\rangle&=v,\\
\langle\Lambda_q,\Lambda_p'\rangle&=u,\\
\langle\Lambda_q,\Lambda_q'\rangle&=c_q+u,\\
\langle\Lambda_p',\Lambda_q'\rangle&=u+v.
\end{align*}
The reciprocal identities \eqref{eq:zh4} and the diagonal identity
\eqref{eq:zhdiag} express the coordinates from the equations with leading
coefficient $q$ in terms of $u$ and $v$.  This gives \eqref{eq:C4}.

For a $4\times4$ alternating matrix with upper triangular entries
$a,b,c,d,e,f$, the Pfaffian is $af+be+cd$.  Substituting the entries of
\eqref{eq:C4} gives \eqref{eq:pf4}.
\end{proof}

\begin{proof}[Proof of Theorem~\ref{thm:intro-four}]
By Proposition~\ref{prop:matrix4}, $\Pi(p,q)$ is the Pfaffian of the Cassels matrix.
Hence its value is independent of the auxiliary primitive solutions.  The assertions about rank and the Shafarevich--Tate group follow from
Proposition~\ref{prop:Cassels-nondeg}.
\end{proof}

\begin{proof}[Proof of Theorem~\ref{thm:intro-congruence}]
Here $\sigma_Q(pq)=\sigma_0(pq)=4$ and $s_2(pq)=4$.  Theorem
\ref{thm:main-principle} therefore gives
\[
\frac{r_f(pq)-r_g(pq)}{32}
\equiv\Pf(C_{pq})
=\Pi(p,q)\pmod2.
\]
Since $pq\equiv1\pmod8$, Proposition~\ref{prop:qin}(ii) gives
\[
\frac{r_f(pq)-r_g(pq)}2=r_a(pq)-h(-pq).
\]
Dividing by $16$ yields \eqref{eq:mod32-intro}.
\end{proof}

The pure Selmer group of $E_{2pq}$ also has dimension $4$, with the basis
\eqref{eq:basis4}.  Choose primitive positive solutions of
\begin{align*}
p\alpha_p^2+2q\beta_p^2&=4\gamma_p^2,\\
q\alpha_q^2+2p\beta_q^2&=4\gamma_q^2,\\
p{\alpha_p'}^2-2q{\beta_p'}^2&=4{\gamma_p'}^2,\\
q{\alpha_q'}^2-2p{\beta_q'}^2&=4{\gamma_q'}^2,\\
p{\alpha_p''}^2-4q{\beta_p''}^2&=4{\gamma_p''}^2.
\end{align*}
These solutions exist by Lemma~\ref{lem:pairing-formulas}.  Set
\[
u_{\mathrm e}=\left[\frac{\gamma_p}{q}\right],\qquad
v_{\mathrm e}=\left[\frac{\gamma_p'}{q}\right],\qquad
\delta_{\mathrm e}=\left[\frac{\gamma_p'\gamma_p''}{q}\right],
\]
\[
w_p=c_p+\left[\frac{\gamma_p'}p\right],
\qquad
w_q=c_q+\left[\frac{\gamma_q'}q\right],
\]
and define
\begin{equation}\label{eq:Pi2}
\Pi_2(p,q)=\delta_{\mathrm e}(u_{\mathrm e}+v_{\mathrm e})+w_p w_q+u_{\mathrm e}v_{\mathrm e}\in\F_2.
\end{equation}

\begin{theorem}\label{thm:even4}
Assume \eqref{eq:layer}.  The value $\Pi_2(p,q)$ is independent of the auxiliary
primitive solutions above.  The equality
\[
\Pi_2(p,q)=1
\]
holds if and only if
\[
2pq\text{ is noncongruent}
\quad\text{and}\quad
\Sha(E_{2pq})[2^\infty]\cong(\Z/2\Z)^4.
\]
Furthermore,
\begin{equation}\label{eq:even-cong}
\frac{r_s(pq)-r_t(pq)}{32}\equiv\Pi_2(p,q)\pmod2,
\end{equation}
and therefore
\begin{equation}\label{eq:even-cong2}
r_s(pq)\equiv h(-2pq)+16\Pi_2(p,q)\pmod{32}.
\end{equation}
\end{theorem}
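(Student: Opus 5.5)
The plan mirrors the odd case of Proposition~\ref{prop:matrix4} and Theorem~\ref{thm:intro-congruence}, now with $\varepsilon=2$ in Lemma~\ref{lem:pairing-formulas}. The first step is to show that $\Pi_2(p,q)$ is the Pfaffian of the Cassels matrix of $E_{2pq}$ in the basis \eqref{eq:basis4}; the remaining assertions then follow from results already proved.

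\emph{Computing the matrix.} Apply Lemma~\ref{lem:pairing-formulas} with $n=2pq$, $(p_1,p_2)=(p,q)$, $m_1=2q$ and $m_2=2p$. The three norm equations attached to $p_1=p$ are exactly the equations defining $\gamma_p,\gamma_p',\gamma_p''$; note that $2m_1=4q$. The plus and minus equations attached to $p_2=q$ are those defining $\gamma_q,\gamma_q'$. Reading off the entries gives
\[
\langle\Lambda_p,\Lambda_q\rangle=\delta_{\mathrm e},\quad
\langle\Lambda_p,\Lambda_p'\rangle=w_p,\quad
\langle\Lambda_q,\Lambda_q'\rangle=w_q,\quad
\langle\Lambda_p,\Lambda_q'\rangle=v_{\mathrm e},\quad
\langle\Lambda_q,\Lambda_p'\rangle=[\gamma_p/q]=u_{\mathrm e},
\]
where the last entry uses the reciprocal form of \eqref{eq:zh4}. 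The remaining entry is
\[
\langle\Lambda_p',\Lambda_q'\rangle=[\gamma_p\gamma_p'/q]=u_{\mathrm e}+v_{\mathrm e}.
\]
The upper-triangular entries are therefore $a=\delta_{\mathrm e}$, $b=w_p$, $c=v_{\mathrm e}$, $d=u_{\mathrm e}$, $e=w_q$, $f=u_{\mathrm e}+v_{\mathrm e}$. The formula $af+be+cd$ then gives
\[
\delta_{\mathrm e}(u_{\mathrm e}+v_{\mathrm e})+w_pw_q+u_{\mathrm e}v_{\mathrm e}=\Pi_2(p,q).
\]
The Cassels pairing is intrinsic and its Pfaffian is basis independent, as shown at the end of the proof of Theorem~\ref{thm:main-principle}. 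Hence $\Pi_2$ does not depend on the auxiliary solutions. The characterization of $\Pi_2=1$ is Proposition~\ref{prop:Cassels-nondeg} together with $s_2(2pq)=4$ from Proposition~\ref{prop:rank4-classification}.

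\emph{Congruences.} For $n=2pq$ we have $\sigma_Q(n)=\sigma_0(pq)=4$ and $s_2=4$, so $2\sigma_Q2^{s_2/2}=32$. Theorem~\ref{thm:main-principle} then gives \eqref{eq:even-cong}. For \eqref{eq:even-cong2}, use Proposition~\ref{prop:qin}(iii) with $m=pq\equiv1\pmod4$, which gives $r_s+r_t=2h(-2pq)$. Therefore
\[
r_s(pq)=h(-2pq)+\tfrac12\bigl(r_s(pq)-r_t(pq)\bigr)\equiv h(-2pq)+16\Pi_2(p,q)\pmod{32}.
\]
This step needs $r_s-r_t$ to be divisible by $32$, which holds by the divisibility part of Theorem~\ref{thm:main-principle}. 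Equivalently, one can write $r_s=(r_s+r_t)/2+(r_s-r_t)/2$ directly.

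\emph{Expected obstacle.} The main obstacle is bookkeeping in the index-$q$ entries. The symbols $[\gamma_q/p]$ and $[\gamma_q'/q]$ must be converted to the $p$-indexed quantities via \eqref{eq:zh4} and \eqref{eq:zhdiag}, and these identities must still hold when $m_i$ carries the factor $2$. The proof of Lemma~\ref{lem:pairing-formulas} covers $\varepsilon=2$, but the local points at $p_i$ there involve $\sqrt{-2m_i}=\sqrt{-4q}$, so their existence should be rechecked. Since $2$ and $-1$ are squares modulo $p$ and $q$, this check should go through.
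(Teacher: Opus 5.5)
Your proposal is correct and follows the paper's proof essentially line by line. You apply Lemma~\ref{lem:pairing-formulas} with $n=2pq$ to get the same six upper triangular entries, with $u_{\mathrm e}$ obtained via \eqref{eq:zh4}; the $4\times4$ Pfaffian is then $\Pi_2(p,q)$, and the remaining assertions follow from Proposition~\ref{prop:Cassels-nondeg}, from Theorem~\ref{thm:main-principle} with $2\sigma_Q(2pq)2^{s_2(2pq)/2}=32$, and from Proposition~\ref{prop:qin}(iii). Your closing worry about converting $q$-indexed symbols is unnecessary: $w_q$ is defined directly via $[\gamma_q'/q]$, and $[\gamma_q/p]$ never enters the upper triangular entries.
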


\begin{proof}
Under \eqref{eq:layer}, \eqref{eq:Monsky-even} is zero, so the four classes in
\eqref{eq:basis4} form a basis.  Apply Lemma~\ref{lem:pairing-formulas} with $n=2pq$ and
$(p_1,p_2)=(p,q)$.
For $p_1=p$ the three norm equations have coefficients $2q,-2q,-4q$, as in
the definitions of $\gamma_p,\gamma_p',\gamma_p''$.  For $p_2=q$ the plus and minus
equations have coefficients $2p$ and $-2p$.  The six upper triangular entries are
therefore
\begin{align*}
\langle\Lambda_p,\Lambda_q\rangle
 &=\left[\frac{\gamma_p'\gamma_p''}{q}\right]=\delta_{\mathrm e},\\
\langle\Lambda_p,\Lambda_p'\rangle
 &=c_p+\left[\frac{\gamma_p'}p\right]=w_p,\\
\langle\Lambda_p,\Lambda_q'\rangle
 &=\left[\frac{\gamma_p'}q\right]=v_{\mathrm e},\\
\langle\Lambda_q,\Lambda_p'\rangle
 &=\left[\frac{\gamma_q'}p\right]
  =\left[\frac{\gamma_p}q\right]=u_{\mathrm e},\\
\langle\Lambda_q,\Lambda_q'\rangle
 &=c_q+\left[\frac{\gamma_q'}q\right]=w_q,\\
\langle\Lambda_p',\Lambda_q'\rangle
 &=\left[\frac{\gamma_p\gamma_p'}q\right]=u_{\mathrm e}+v_{\mathrm e}.
\end{align*}
In the fourth equality, \eqref{eq:zh4} gives $[\gamma_q'/p]=u_{\mathrm e}$.  The Pfaffian is
\[
\delta_{\mathrm e}(u_{\mathrm e}+v_{\mathrm e})+w_p w_q+u_{\mathrm e}v_{\mathrm e}=\Pi_2(p,q).
\]
Since the displayed expression is the Pfaffian of the Cassels matrix, it is independent
of the auxiliary primitive solutions.  The nondegeneracy criterion follows from
Proposition~\ref{prop:Cassels-nondeg}.

Since $\sigma_Q(2pq)=4$ and $s_2(2pq)=4$, Theorem~\ref{thm:main-principle} gives
\eqref{eq:even-cong}.  Finally Proposition~\ref{prop:qin}(iii) gives
$r_s(pq)+r_t(pq)=2h(-2pq)$.  Hence
\[
\frac{r_s(pq)-h(-2pq)}{16}=\frac{r_s(pq)-r_t(pq)}{32}
\equiv\Pi_2(p,q)\pmod2,
\]
which is equivalent to \eqref{eq:even-cong2}.
\end{proof}

We checked the formulas by enumerating the pairs in \eqref{eq:layer}.  There are $600$ pairs with $pq<250000$.  Direct enumeration of
$r_a(pq)$ and reduced binary quadratic forms shows that
$(r_a(pq)-h(-pq))/16$ is integral in every case and odd in $342$ cases; the even
quantity $(r_s(pq)-h(-2pq))/16$ is odd in $312$ cases.  For $pq<600000$ there are $1453$ pairs.  The corresponding quotients
are odd in $797$ and $740$ cases, respectively.  The R\'edei formulas of Theorem~\ref{thm:choice-free}
were checked against both defects for all $1453$ pairs.  Independently solving
the norm equations verifies both matrices for all $77$ pairs with $pq<40000$.  These computations
are not used in the proofs.

The table gives examples for $E_{pq}$.  The
norm equations in \eqref{eq:symbols-intro} were solved directly, while $r_a$ and the
class numbers were computed independently.  Put
$\rho=(r_a(pq)-h(-pq))/16\pmod2$.

\begin{center}
\small
\begin{tabular}{rrrrrrrrrrr}
\toprule
$p$&$q$&$c_p$&$c_q$&$u$&$v$&$\delta$&$\Pi$&$\rho$&$h(-pq)$&$r_a(pq)$\\
\midrule
17&89 &1&1&1&0&0&0&0&16 &16\\
17&137&1&0&1&0&1&0&0&32 &32\\
41&73 &0&1&0&1&0&1&1&48 &32\\
17&257&1&0&0&1&1&1&1&48 &32\\
41&113&0&0&1&0&0&0&0&32 &32\\
17&281&1&1&1&1&1&1&1&32 &48\\
73&89 &1&1&0&0&1&1&1&64 &80\\
73&97 &1&1&0&1&1&1&1&64 &48\\
\bottomrule
\end{tabular}
\end{center}

\section{Cassels matrices and R\'edei symbols}\label{sec:general}

We now allow any number of prime factors.  Throughout this section let
\begin{equation}\label{eq:general-hyp}
n=p_1\cdots p_k,
\qquad
p_i\equiv1\pmod8,
\qquad
\left(\frac{p_i}{p_j}\right)=1\quad(i\ne j),
\end{equation}
with $k\geq1$ and the $p_i$ distinct.  Then $A_n=0$ and $D_{n,\pm2}=0$, so $M_n=0$ and
$s_2(n)=2k$.  The natural ordered basis is
\begin{equation}\label{eq:general-basis}
\mathcal B_n=(\Lambda_1,\ldots,\Lambda_k,
\Lambda_1',\ldots,\Lambda_k'),
\qquad
\Lambda_i=(p_i,1,p_i),\quad \Lambda_i'=(p_i,p_i,1).
\end{equation}

For each $i$, choose primitive positive solutions with third coordinates
$\gamma_i,\gamma_i',\gamma_i''$ of
\begin{align}
 p_i\alpha_i^2+\frac{n}{p_i}\beta_i^2&=4\gamma_i^2,
 \label{eq:general-plus}\\
 p_i{\alpha_i'}^2-\frac{n}{p_i}{\beta_i'}^2&=4{\gamma_i'}^2,
 \label{eq:general-minus}\\
 p_i{\alpha_i''}^2-\frac{2n}{p_i}{\beta_i''}^2&=4{\gamma_i''}^2.
 \label{eq:general-minus2}
\end{align}
As before, put
\begin{equation}\label{eq:ci-general}
c_i:=\left[\frac{r_i+1}{p_i}\right],
\qquad r_i^2\equiv2\pmod{p_i}.
\end{equation}
The value of $c_i$ is independent of the choice of $r_i$.

Define $k\times k$ matrices $X,Y,Z$ over $\F_2$ as follows.  Set
$X_{ii}=Z_{ii}=0$, and for $i\ne j$ put
\begin{equation}\label{eq:block-XZ}
X_{ij}=\left[\frac{\gamma_i'\gamma_i''}{p_j}\right],
\qquad
Z_{ij}=\left[\frac{\gamma_i\gamma_i'}{p_j}\right].
\end{equation}
For the mixed block put
\begin{equation}\label{eq:block-Y}
Y_{ij}=
\begin{cases}
 c_i+\displaystyle\left[\dfrac{\gamma_i'}{p_i}\right],&i=j,\\[3mm]
 \displaystyle\left[\dfrac{\gamma_i'}{p_j}\right],&i\ne j.
\end{cases}
\end{equation}

\begin{proposition}[Cassels matrix for a product of primes]\label{prop:block-matrix}
Under \eqref{eq:general-hyp}, the Gram matrix of the Cassels pairing with
respect to \eqref{eq:general-basis} is
\begin{equation}\label{eq:block-C}
C_n=\begin{pmatrix}
X&Y\\
Y^T&Z
\end{pmatrix}.
\end{equation}
The blocks $X$ and $Z$ are alternating.  Each entry in the fixed basis is
independent of the primitive solutions chosen in
\eqref{eq:block-XZ}--\eqref{eq:block-Y}.  A change of basis replaces $C_n$
by a congruent matrix and leaves $\Pf(C_n)$ unchanged.
\end{proposition}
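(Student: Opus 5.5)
The proof reads the entries of $C_n$ off Lemma~\ref{lem:pairing-formulas} with $\varepsilon=1$. First, since the Monsky matrix $M_n$ in \eqref{eq:Monsky} vanishes under \eqref{eq:general-hyp}, we have $s_2(n)=2k$. The $2k$ classes in \eqref{eq:general-basis} lie in $\Sel_2(E_n)$ and are independent modulo the Kummer image of $E_n(\Q)[2]$, which is spanned by the images of $(-n,-2n,2)$-type triples. Independence holds because the first and second coordinates of products of the $\Lambda_i,\Lambda_i'$ are positive and supported on $\{p_1,\dots,p_k\}$, and each $p_i$ is detected separately. So $\mathcal B_n$ is a basis of $\Sel_2'(E_n)$.

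Next I would match each block with a formula from the lemma. Equation \eqref{eq:zh1} gives $\langle\Lambda_i,\Lambda_j\rangle=X_{ij}$. Equations \eqref{eq:zh2} and \eqref{eq:zh3} give $\langle\Lambda_i,\Lambda_j'\rangle=Y_{ij}$, both on and off the diagonal. Equation \eqref{eq:zh5} gives $\langle\Lambda_i',\Lambda_j'\rangle=Z_{ij}$. The lower-left block is $\langle\Lambda_i',\Lambda_j\rangle=\langle\Lambda_j,\Lambda_i'\rangle=Y_{ji}$, since over $\F_2$ an alternating form is symmetric. This is consistent with \eqref{eq:zh4}, which reads $[\gamma_i/p_j]=[\gamma_j'/p_i]=Y_{ji}$. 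Hence $C_n$ has the form \eqref{eq:block-C}.

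The Cassels pairing is alternating, so $\langle x,x\rangle=0$. This gives $X_{ii}=Z_{ii}=0$ and $X_{ij}=X_{ji}$, $Z_{ij}=Z_{ji}$, so $X$ and $Z$ are alternating. Note that this symmetry is a consequence of the pairing, not visible in the formulas: for example, $[\gamma_i'\gamma_i''/p_j]=[\gamma_j'\gamma_j''/p_i]$ is a nontrivial reciprocity that we obtain for free.

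Independence from the auxiliary choices also follows from the pairing. Each entry is a value of the intrinsic Cassels pairing on two fixed Selmer classes, and Lemma~\ref{lem:pairing-formulas} holds for every choice of primitive positive solutions. Two different choices therefore compute the same number. Finally, a change of basis $P$ replaces $C_n$ by $P^TC_nP$, and $\Pf(P^TC_nP)=\det(P)\Pf(C_n)=\Pf(C_n)$ over $\F_2$, exactly as at the end of the proof of Theorem~\ref{thm:main-principle}.

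The main point needing care is applying Lemma~\ref{lem:pairing-formulas} uniformly for general $k$. That requires checking that the local points it uses exist. At $p_i$, the point needs $\sqrt{-m_i}$ and $\sqrt{-2m_i}$ in $\Q_{p_i}$ with $m_i=n/p_i$. These exist because $p_i\equiv1\pmod 8$ and $(p_j/p_i)=1$ for all $j\ne i$. At $p_j$, the point needs $\sqrt{p_i}$ in $\Q_{p_j}$, which again holds by the pairwise residue hypothesis. The vanishing of the contributions at $p_l$ for $l\ne i,j$, at $2$ and at $\infty$ was handled in the lemma under exactly these hypotheses. With these checks in place the lemma's statement applies for arbitrary $k$.
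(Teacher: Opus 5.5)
Your proposal is correct and follows the paper's proof: the blocks $X$, $Y$, $Z$ are read off \eqref{eq:zh1}, \eqref{eq:zh2}, \eqref{eq:zh3} and \eqref{eq:zh5} of Lemma~\ref{lem:pairing-formulas} with $\varepsilon=1$. The lower-left block is $Y^T$ by symmetry of the pairing (consistent with \eqref{eq:zh4}), and $X$, $Z$ are alternating because the pairing is. Your extra checks (that $\mathcal B_n$ is a basis, independence of the auxiliary solutions, and $\det(P)=1$ over $\F_2$) are fine. One small correction: with the paper's Kummer convention, the nontrivial $2$-torsion images are $(-n,n,-1)$, $(2,2n,n)$ and $(-2n,2,-n)$ rather than $(-n,-2n,2)$, which does not affect your positivity argument.
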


\begin{proof}
Apply Lemma~\ref{lem:pairing-formulas} with $\varepsilon=1$.  Formula \eqref{eq:zh1} gives the block
$X$, formulas \eqref{eq:zh2} and \eqref{eq:zh3} give $Y$, and
\eqref{eq:zh5} gives $Z$.  Formula \eqref{eq:zh4} identifies the opposite
mixed block with $Y^T$.  The blocks $X$ and $Z$ are alternating because the Cassels pairing is
alternating.
\end{proof}

We record the Pfaffian expansion for this block matrix.  If $I\subseteq\{1,\ldots,k\}$,
write $X_I$ and $Z_I$ for the principal submatrices indexed by $I$.  If
$I,J\subseteq\{1,\ldots,k\}$, write $Y_{I,J}$ for the corresponding rectangular
submatrix.  We use $\Pf(\varnothing)=\det(\varnothing)=1$.

\begin{proposition}[Pfaffian of a block matrix]\label{prop:block-pf}
Let $X,Z$ be alternating $k\times k$ matrices over $\F_2$ and let $Y$ be any
$k\times k$ matrix.  Then
\begin{equation}\label{eq:block-pf}
\Pf\begin{pmatrix}X&Y\\Y^T&Z\end{pmatrix}
=
\sum_{r=0}^{\lfloor k/2\rfloor}
\ \sum_{\substack{I,J\subseteq\{1,\ldots,k\}\\ |I|=|J|=2r}}
\Pf(X_I)\Pf(Z_J)\det\bigl(Y_{I^c,J^c}\bigr)
\quad\text{in }\F_2.
\end{equation}
\end{proposition}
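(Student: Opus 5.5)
We prove \eqref{eq:block-pf} from the combinatorial description of the Pfaffian over $\F_2$. In characteristic $2$ all signs disappear, so for an alternating matrix $M$ of size $2k$ we have
\[
\Pf(M)=\sum_{\mu}\prod_{\{a,b\}\in\mu}M_{ab},
\]
where $\mu$ ranges over the perfect matchings of the index set. We index the rows of $C=\begin{pmatrix}X&Y\\Y^T&Z\end{pmatrix}$ by two copies of $\{1,\ldots,k\}$: a \emph{left} copy for the $X$-block and a \emph{right} copy for the $Z$-block. An edge of a matching $\mu$ is then of one of three kinds:
\begin{itemize}[label=\textup{--}]
\item left--left, with weight an entry of $X$;
\item right--right, with weight an entry of $Z$;
\item left--right, pairing left index $i$ with right index $j$, with weight $Y_{ij}$ (the entry $Y^T_{ji}$ is the same value).
\end{itemize}

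The key step is to group the matchings by the sets they use internally. Let $I$ be the set of left indices matched among themselves, and $J$ the set of right indices matched among themselves. The remaining left indices $I^c$ must then be matched bijectively with the remaining right indices $J^c$. This forces $|I^c|=|J^c|$, hence $|I|=|J|$. Each of $I$ and $J$ is also perfectly matched internally, so both have even size, say $2r$.

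Conversely, for fixed $(I,J)$, a matching with these data is exactly a triple consisting of:
\begin{itemize}[label=\textup{--}]
\item a perfect matching of $I$;
\item a perfect matching of $J$;
\item a bijection $\pi\colon I^c\to J^c$.
\end{itemize}
The weight of $\mu$ factors as the product of the three weights. Summing over the three independent choices gives
\[
\Pf(X_I)\cdot\Pf(Z_J)\cdot\sum_{\pi}\prod_{i\in I^c}Y_{i\pi(i)}.
\]
Over $\F_2$ the last factor is the permanent of $Y_{I^c,J^c}$, which equals its determinant. Summing over all admissible pairs $(I,J)$ yields \eqref{eq:block-pf}. The boundary cases are covered by the conventions $\Pf(\varnothing)=\det(\varnothing)=1$: $r=0$ gives $\det Y$, and $I=J=\{1,\ldots,k\}$ gives $\Pf(X)\Pf(Z)$.

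The only point needing care is the starting formula, namely that the signless matching expansion computes $\Pf$ modulo $2$. This follows from the standard signed expansion $\Pf(M)=\sum_\mu\operatorname{sgn}(\mu)\prod M_{ab}$ for alternating matrices, reduced modulo $2$. Alternatively, it follows from the Laplace-type recursion $\Pf(M)=\sum_{b}M_{1b}\Pf(M_{\hat1\hat b})$ over $\F_2$ by induction on the size.

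We also check that the bijection between matchings and triples is well defined. Since the diagonal entries of the alternating blocks vanish, no index is matched to itself. The correspondence $\mu\leftrightarrow(\text{matching of }I,\text{matching of }J,\pi)$ is bijective, with $I$ and $J$ read off from $\mu$. With these two points in place, the argument is pure bookkeeping and I expect no real obstacle.
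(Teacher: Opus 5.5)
Your proposal is correct and follows essentially the same argument as the paper. Both expand the Pfaffian over perfect matchings, group them by the sets $I$ and $J$ of internally matched indices in the two blocks, and note that the cross matchings contribute the permanent of $Y_{I^c,J^c}$, which equals its determinant over $\F_2$; your justification of the signless expansion and of the bijection with triples just spells out what the paper leaves implicit.
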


\begin{proof}
Expand the Pfaffian as a sum over perfect matchings of the $2k$ vertices
corresponding to the two blocks.  Suppose that the vertices paired internally in
the first block form $I$ and those paired internally in the second block form
$J$.  If $|I|=2r$ and $|J|=2s$, then the remaining vertices can be paired across
the two blocks only when $r=s$.  For fixed $I$ and $J$, the sum over internal
matchings contributes $\Pf(X_I)\Pf(Z_J)$, while the sum over the cross matchings
is the permanent of $Y_{I^c,J^c}$.  Over $\F_2$ the signs disappear, so this
permanent is the determinant.  Summing over $r,I,J$ proves
\eqref{eq:block-pf}.
\end{proof}

For $k=2$, Proposition~\ref{prop:block-pf} reads
\begin{equation}\label{eq:k2block}
\Pf(C_n)=X_{12}Z_{12}+\det Y.
\end{equation}
In the basis used in Section~\ref{sec:four}, this is exactly
\[
\delta(u+v)+
\det\begin{pmatrix}v+c_p&v\\u&u+c_q\end{pmatrix}
=c_pc_q+c_pu+c_qv+\delta(u+v)=\Pi(p,q).
\]
For $k=3$, let
$\widehat i=\{1,2,3\}\setminus\{i\}$.  Write $X_{\widehat i}$ for the
upper off-diagonal entry of the submatrix indexed by $\widehat i$,
and define $Z_{\widehat j}$ similarly.  Then
\begin{equation}\label{eq:k3-pf}
\Pf(C_{p_1p_2p_3})
=\det Y+\sum_{i=1}^3\sum_{j=1}^3
X_{\widehat i}Z_{\widehat j}Y_{ij}.
\end{equation}
Its entries are obtained from
\eqref{eq:general-plus}--\eqref{eq:general-minus2}.

\begin{theorem}\label{thm:general}
Assume \eqref{eq:general-hyp}, and let $C_n$ be the explicit matrix
\eqref{eq:block-C}.  Then:
\begin{enumerate}[label=\textup{(\roman*)}]
\item $\Pf(C_n)=1$ if and only if
\[
\rk E_n(\Q)=0,
\qquad
\Sha(E_n)[2^\infty]\cong(\Z/2\Z)^{2k};
\]
\item one has
\begin{equation}\label{eq:general-cong}
4^k\mid r_a(n)-h(-n)
\end{equation}
and
\begin{equation}\label{eq:general-cong2}
r_a(n)\equiv h(-n)+4^k\Pf(C_n)\pmod{2\cdot4^k}.
\end{equation}
\item writing $r=\rk C_n$, one has
\begin{equation}\label{eq:general-radical}
2^{3k-r/2}\mid r_a(n)-h(-n).
\end{equation}
Equality of $2$-adic valuations is equivalent to rank zero and
\[
\Sha(E_n)[2^\infty]\cong(\Z/2\Z)^r\oplus(\Z/4\Z)^{2k-r}.
\]
\end{enumerate}
\end{theorem}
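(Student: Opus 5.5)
The plan is to specialize the general results of Section~\ref{sec:framework} to $n=p_1\cdots p_k$ under \eqref{eq:general-hyp}, using Proposition~\ref{prop:block-matrix} to identify $C_n$ with the Cassels Gram matrix in the basis \eqref{eq:general-basis}. For (i), first note that $M_n=0$ forces $s_2(n)=2k$, which is even. Proposition~\ref{prop:Cassels-nondeg} then says directly that $\Pf(C_n)=1$ exactly when the pairing is nondegenerate, that is, when $\rk E_n(\Q)=0$ and $\Sha(E_n)[2^\infty]\cong(\Z/2\Z)^{2k}$.

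For (ii), I would compute the normalizing constants in Theorem~\ref{thm:main-principle}. Since $n$ is odd, $\sigma_Q(n)=\sigma_0(n)=2^k$. Hence $2\sigma_Q(n)2^{s_2(n)/2}=2^{k+1}2^k=2\cdot4^k$. Theorem~\ref{thm:main-principle} then gives
\[
2\cdot 4^k\mid r_f(n)-r_g(n),
\qquad
\frac{r_f(n)-r_g(n)}{2\cdot4^k}\equiv\Pf(C_n)\pmod 2.
\]
Each $p_i\equiv1\pmod8$, so $n\equiv1\pmod8$, and $n>1$ because $k\ge1$. Proposition~\ref{prop:qin}(ii) therefore applies and gives $(r_f(n)-r_g(n))/2=r_a(n)-h(-n)$. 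Substituting, $4^k\mid r_a(n)-h(-n)$ and $(r_a(n)-h(-n))/4^k\equiv\Pf(C_n)\pmod2$, which is \eqref{eq:general-cong}--\eqref{eq:general-cong2}.

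For (iii), I would apply Theorem~\ref{thm:radical-depth} with $s=2k$ and $d=2k-r$, where $r=\rk C_n$ is even because $C_n$ is alternating. This gives $v_2(H_n)\ge s-r/2=2k-r/2$ with $H_n=(r_f(n)-r_g(n))/2^{k+1}=(r_a(n)-h(-n))/2^k$. Hence
\[
v_2\bigl(r_a(n)-h(-n)\bigr)\ge 3k-r/2,
\]
which is \eqref{eq:general-radical}; the case $r_a(n)=h(-n)$ is covered by $v_2(0)=+\infty$. Equality of valuations is equality in \eqref{eq:radical-bound}. By Theorem~\ref{thm:radical-depth} this is equivalent to rank zero and $\Sha(E_n)[2^\infty]\cong(\Z/2\Z)^{s-d}\oplus(\Z/4\Z)^d=(\Z/2\Z)^r\oplus(\Z/4\Z)^{2k-r}$.

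The only point needing care is making sure the hypotheses of the cited results are met and the radical dimension is read correctly. Specifically, Proposition~\ref{prop:qin}(ii) needs $m>1$ with $m\equiv1\pmod 8$, and $\dim\rad C_n=2k-\rk C_n$ is used in Theorem~\ref{thm:radical-depth}. Neither is a genuine obstacle, so the proof is essentially bookkeeping of powers of $2$.
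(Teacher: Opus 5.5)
Your proposal is correct and matches the paper's proof step for step: (i) comes from $M_n=0$ and \cref{prop:Cassels-nondeg}, (ii) from \cref{thm:main-principle} with $\sigma_Q(n)=2^k$ together with \cref{prop:qin}(ii), and (iii) from \cref{thm:radical-depth} with $H_n=(r_a(n)-h(-n))/2^k$ and $d=2k-r$. Your extra checks, that $n>1$, that $n\equiv1\pmod 8$, and that $r$ is even, are points the paper leaves implicit.
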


\begin{proof}
The hypotheses imply $M_n=0$, hence $s_2(n)=2k$, and the classes in
\eqref{eq:general-basis} are the standard basis of the kernel.  Part (i) follows
from Proposition~\ref{prop:Cassels-nondeg}.

For (ii), $\sigma_Q(n)=\sigma_0(n)=2^k$ and $s_2(n)/2=k$.  Theorem
\ref{thm:main-principle} gives
\[
2^{2k+1}\mid r_f(n)-r_g(n),
\qquad
\frac{r_f(n)-r_g(n)}{2^{2k+1}}
 \equiv\Pf(C_n)\pmod2.
\]
Since $n\equiv1\pmod8$, Proposition~\ref{prop:qin}(ii) gives
$r_f(n)-r_g(n)=2(r_a(n)-h(-n))$.  Division by $2$ proves
\eqref{eq:general-cong} and \eqref{eq:general-cong2}.
For (iii), $r_a(n)-h(-n)=2^kH_n$ and $d=2k-r$; now use
Theorem~\ref{thm:radical-depth}.
\end{proof}

\begin{corollary}\label{cor:k3}
Let $p<q<r$ satisfy \eqref{eq:general-hyp}.  If the right-hand side of
\eqref{eq:k3-pf} is $1$, then $pqr$ is noncongruent,
\[
\Sha(E_{pqr})[2^\infty]\cong(\Z/2\Z)^6,
\qquad
|\Sha(E_{pqr})[2^\infty]|=64,
\]
and
\[
r_a(pqr)\equiv h(-pqr)+64\pmod{128}.
\]
\end{corollary}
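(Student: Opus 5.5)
This is the case $k=3$ of Theorem~\ref{thm:general}, with the Pfaffian evaluated by \eqref{eq:k3-pf}. The plan has three steps.

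First, I check the setup. With $p<q<r$ satisfying \eqref{eq:general-hyp}, we have $n=pqr$, $M_n=0$ and $s_2(n)=6$. Proposition~\ref{prop:block-matrix} gives the Gram matrix $C_n=\begin{pmatrix}X&Y\\Y^T&Z\end{pmatrix}$ in the basis \eqref{eq:general-basis}.

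Second, I justify \eqref{eq:k3-pf} from Proposition~\ref{prop:block-pf}. Only $r=0$ and $r=1$ contribute.
\begin{itemize}
\item For $r=0$ the contribution is $\det Y$.
\item For $r=1$, the sets $I$ and $J$ have size $2$, so $I=\widehat i$ and $J=\widehat j$ for some $i,j$.
\item Then $\Pf(X_I)=X_{\widehat i}$ and $\Pf(Z_J)=Z_{\widehat j}$.
\item The complements are $I^c=\{i\}$ and $J^c=\{j\}$, so $\det Y_{I^c,J^c}=Y_{ij}$.
\end{itemize}
Summing these terms gives exactly the right-hand side of \eqref{eq:k3-pf}, and it equals $\Pf(C_{pqr})$.

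Third, I apply Theorem~\ref{thm:general}. If the right-hand side is $1$, then $\Pf(C_{pqr})=1$.
\begin{itemize}
\item Part (i) gives $\rk E_{pqr}(\Q)=0$ and $\Sha(E_{pqr})[2^\infty]\cong(\Z/2\Z)^6$. Hence $pqr$ is noncongruent, and the order of this group is $2^6=64$.
\item Part (ii) with $k=3$ gives $r_a(pqr)\equiv h(-pqr)+4^3\cdot1\pmod{2\cdot4^3}$, that is, $r_a(pqr)\equiv h(-pqr)+64\pmod{128}$.
\end{itemize}

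The only point needing care is the bookkeeping in the Pfaffian expansion, in particular identifying $\Pf$ of a $2\times2$ alternating block with its single upper entry. Over $\F_2$ there are no signs to track, so I expect no real obstacle.
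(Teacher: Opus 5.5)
Your proposal is correct and follows the paper's own route. The paper states this corollary without a separate proof, as the $k=3$ case of Theorem~\ref{thm:general}; \eqref{eq:k3-pf} is exactly the terms with summation index $0$ and $1$ in Proposition~\ref{prop:block-pf}, and parts (i) and (ii) then give the $2$-primary structure and the congruence modulo $128$ just as you describe (you may want to rename the summation index, since $r$ is also the third prime).
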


\begin{remark}
The order $64$ in Corollary~\ref{cor:k3} refers to the $2$-primary part.
The Pfaffian does not determine the primary components at odd primes.
\end{remark}

For $k=3$ there are $28$ triples in this family with $pqr<2\,500\,000$.  The
quantity $(r_a(pqr)-h(-pqr))/64$ is odd in $11$ cases.  Solving all nine
norm equations for each triple independently verifies the complete matrices,
\eqref{eq:k3-pf}, and \eqref{eq:general-cong2} in all $28$ cases.  These computations are not used
in the proof.

We use Stevenhagen's additive R\'edei symbol
$[a,b,c]_{\mathrm R}\in\F_2$.  Its admissibility conditions are
\[
(a,b)_v=(a,c)_v=(b,c)_v=1\quad\text{for every place }v,
\]
\[
\gcd(\Delta(a),\Delta(b),\Delta(c))=1,
\]
where $\Delta(a)=\operatorname{disc}\Q(\sqrt a)$.
For admissible triples it is symmetric and linear in each argument
\cite[Def.~7.8, Thm.~7.9]{Stevenhagen}; a trivial squareclass gives zero.
All R\'edei symbols below satisfy these conditions, since the primes in
\eqref{eq:general-hyp} are $1$ modulo $8$ and are pairwise quadratic residues.
When $(a/\ell)=1$ and $\ell\equiv1\pmod4$, write
\[
\left[\frac a\ell\right]_4\in\F_2,
\qquad
(-1)^{[a/\ell]_4}\equiv a^{(\ell-1)/4}\pmod\ell.
\]

The next lemma relates the norm coordinates to R\'edei symbols, including
at primes dividing the coefficients of the equation.

\begin{lemma}[Norm coordinates and R\'edei symbols]\label{lem:norm-redei}
Assume \eqref{eq:general-hyp}, fix $p\mid n$, and put $m=n/p$.
Let $b\in\{m,-m,2m,-2m\}$, and suppose
\[
p\alpha^2+b\beta^2=4\gamma^2
\]
has a primitive positive integral solution.  For every prime $q\mid n$,
\begin{equation}\label{eq:gamma-redei}
\left[\frac\gamma q\right]=[p,b,q]_{\mathrm R}.
\end{equation}
The assertion also holds for $m=1$.  If $b=1$, the right-hand side is zero.
\end{lemma}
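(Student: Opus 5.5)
The proof rests on Stevenhagen's explicit description of the R\'edei symbol. For an admissible triple $(a,b,c)$, take $\beta\in\Q(\sqrt a)$ of norm $b$ times a square; such a $\beta$ exists because $(a,b)_v=1$ everywhere. Then $[a,b,c]_{\mathrm R}$ is the Artin symbol of $\mathfrak c$ in the quadratic extension $\Q(\sqrt a,\sqrt\beta)/\Q(\sqrt a)$, where $\mathfrak c$ is an ideal of norm $c$ (\cite[Def.~7.8]{Stevenhagen}). The equation $p\alpha^2+b\beta^2=4\gamma^2$ supplies such an element directly. I would use the symmetry of the symbol to move $b$ into the first slot: $[p,b,q]_{\mathrm R}=[b,p,q]_{\mathrm R}$. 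Then I work in $K=\Q(\sqrt{-b})$ when $-b$ is not a square, and otherwise in $\Q(\sqrt p)$ with the roles swapped. Since
\[
p\alpha^2=4\gamma^2-b\beta^2=(2\gamma-\beta\sqrt b)(2\gamma+\beta\sqrt b),
\]
it is cleaner to put $\theta=2\gamma+\beta\sqrt b\in\Q(\sqrt b)$, which has norm $p\alpha^2$. So $[b,p,q]_{\mathrm R}$ is the splitting behaviour of a prime above $q$ in $\Q(\sqrt b,\sqrt\theta)$.

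If $q\nmid b$, so $q=p$ with $q\nmid m$, or $q\mid m$ in the other configurations, I compute the Artin symbol by reducing $\theta$ modulo a prime $\mathfrak q\mid q$. When $q\mid b$, the prime $q$ ramifies in $\Q(\sqrt b)$ and $\theta\equiv2\gamma\pmod{\sqrt b}$. The residue field is $\F_q$, so the symbol is $[2\gamma/q]=[\gamma/q]$, using $q\equiv1\pmod 8$. When $q=p$ divides the norm, I would instead reduce modulo the conjugate prime, where $\theta$ is congruent to $\theta+\bar\theta=4\gamma$. Equivalently, I would use the symmetric slot arrangement $[p,b,p]$ with $p$ ramified in $\Q(\sqrt p)$ and the element $2\gamma+\alpha\sqrt p$ of norm $-b\beta^2$, reducing to $[2\gamma/p]$ again. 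In each case I must check that $\gamma$ is prime to $q$. This follows from primitivity: if $q\mid\gamma$ and $q\mid b$, then $q\mid p\alpha^2$, so $q\mid\alpha$, and then $q^2\mid b\beta^2$ forces $q\mid\beta$ because $b$ is squarefree in $q$. The case $q=p$ is similar.

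The main obstacle is that Stevenhagen's definition requires $\beta$, here $\theta$, to be chosen with controlled ramification. The extension $\Q(\sqrt b,\sqrt\theta)$ must be unramified over $\Q(\sqrt b)$ at finite primes outside those dividing $\gcd$-type data, in particular at $2$, and the symbol must be independent of that normalization. To handle $2$, I would use that $\theta=2\gamma+\beta\sqrt b$ has $\gamma$ odd; with $4\mid p\alpha^2+b\beta^2$ and $p\equiv1\pmod8$, this pins down $\alpha,\beta$ modulo $4$, so $\theta$ is congruent to a square modulo $4$ up to a unit, or can be replaced by $\theta\cdot(\text{unit})$. Any ambiguity changes the symbol by a quadratic character evaluated at $q$ of the form $[t/q]$ with $t\mid 2pm$ or $t=-1$. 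All such characters vanish, since $q\equiv1\pmod8$ and the primes are pairwise residues. This same vanishing handles the choice of prime $\mathfrak q$ and the replacement of $\gamma$ by $2\gamma$ or $4\gamma$.

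Finally, the degenerate cases are as follows. For $m=1$ the argument is unchanged, with $b\in\{\pm1,\pm2\}$. If $b=1$, the symbol $[p,1,q]_{\mathrm R}$ has a trivial square class in one argument and so is zero by linearity.
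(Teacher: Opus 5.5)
Your route is the paper's, mirrored by symmetry. The paper works in $\Q(\sqrt p)$ with $\eta=2\gamma+\alpha\sqrt p$, whose norm is $b\beta^2$ (not $-b\beta^2$ as you wrote). You work in $\Q(\sqrt b)$ with $\theta=2\gamma+\beta\sqrt b$. Since $(\sqrt\eta+\sqrt{\bar\eta})^2=2\theta$ when $\sqrt\eta\sqrt{\bar\eta}=\beta\sqrt b$, the two R\'edei fields differ only by the twist by $2$, so your symmetry step is harmless (indeed unnecessary). Your residue computations are correct: $\theta\equiv2\gamma$ at the ramified prime over $q\mid m$, and $\theta\equiv4\gamma$ at the prime over $p$ where $\theta$ is a unit. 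One reason is misattributed: independence of the chosen prime over $q$ comes from $\mathrm{Gal}(F/\Q(\sqrt b,\sqrt p))$ being central in $\mathrm{Gal}(F/\Q)$, not from the vanishing of characters $[t/q]$.

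The genuine gap is the ramification step, which you call the main obstacle but do not carry out.

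You never show that $F=\Q(\sqrt b,\sqrt p,\sqrt\theta)$ is unramified over $\Q(\sqrt b,\sqrt p)$ at odd primes. You even state the requirement over $\Q(\sqrt b)$, where $\sqrt\theta$ genuinely ramifies above $p$. So your claim that every correction is a twist by $t\mid 2pm$ or $t=-1$ is unproved.

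This is exactly where primitivity is needed. Replace $(\alpha,\beta,\gamma)$ by $(\lambda\alpha,\lambda\beta,\lambda\gamma)$ with $\lambda>0$ prime to $2n$. None of the steps you actually carry out detects the change, yet $[\gamma/q]$ changes by $[\lambda/q]$, which can be $1$; what fails is that $\Q(\sqrt b,\sqrt p,\sqrt{\lambda\theta})$ ramifies at $\lambda$.

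The missing argument runs as follows.
\begin{itemize}
\item Let $\ell\nmid pb$ be odd. A prime of $\Q(\sqrt b)$ over $\ell$ dividing both $\theta$ and $\bar\theta$ divides $\theta+\bar\theta=4\gamma$ and $\theta-\bar\theta=2\beta\sqrt b$. Hence $\ell\mid\gcd(\beta,\gamma)$, which is $1$ by primitivity, so all valuations of $\theta$ over $\ell$ are even.
\item Over $\ell\mid m$, $\theta\equiv2\gamma$ is a unit.
\item Over $p$, the odd valuation of $\theta$ becomes even once $\sqrt p$ is adjoined.
\end{itemize}
Only then do Stevenhagen's Props.~7.2--7.3 and Cor.~7.4 give a minimally ramified twist $\Q(\sqrt b,\sqrt p,\sqrt{t\theta})$ with $t\in\{\pm1,\pm2\}$, which is invisible at $q$.

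Your dyadic paragraph cannot replace this, and it rests on a false premise: $\gamma$ need not be odd. For $p=17$, $m=1$, $b=-1$, the primitive solution $(\alpha,\beta,\gamma)=(1,1,2)$ has $\gamma=2$.

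There is a second gap at $b=1$. There you only show that the R\'edei symbol vanishes. The lemma then asserts $[\gamma/p]=0$, and there is no R\'edei field to compute with, so a separate argument is needed. For instance, write $p\alpha^2=(2\gamma-\beta)(2\gamma+\beta)$; both factors are positive and, by primitivity, coprime away from $2$. Hence the factor prime to $p$ is a power of $2$ times a square. It is also $\equiv4\gamma\pmod p$, since the other factor is divisible by $p$. Because $(2/p)=1$, this gives $[\gamma/p]=0$. The paper gives an equivalent parity argument.
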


\begin{proof}
First, $\gamma$ is prime to $n$.  Indeed, if a prime divisor of $p$ or $m$
divided $\gamma$, the equation and squarefreeness would force it to divide
both $\alpha$ and $\beta$, contrary to primitivity.
Put $K_p=\Q(\sqrt p)$ and
\[
\eta=2\gamma+\alpha\sqrt p,
\qquad N_{K_p/\Q}(\eta)=b\beta^2.
\]
For nontrivial $b$, the field
$F=\Q(\sqrt p,\sqrt b,\sqrt\eta)$ is a R\'edei extension for $(p,b)$.
We check its ramification before evaluating its Artin symbol.

At an odd prime outside $pb$, a common prime divisor of $\eta$ and its
conjugate would divide both $\alpha$ and $\gamma$, and hence $\beta$.
Primitivity excludes this.  The norm identity then gives even valuations
of $\eta$, so the quadratic extension is unramified there.  At an odd
prime dividing $b$, $p$ is a local square.  The two valuations of $\eta$
in $K_p$ are zero and odd; after adjoining $\sqrt b$ the latter becomes
even.  The resulting extension over $\Q(\sqrt{pb})$ is unramified.
At $p$, $\eta$ is a unit and the same tame criterion applies.  Thus only
dyadic ramification remains.  Since $p\equiv1\pmod8$, there exists
$t\in\{1,-1,2,-2\}$ such that
\[
F_t=\Q(\sqrt p,\sqrt b,\sqrt{t\eta})
\]
is minimally ramified
\cite[Props.~7.2--7.3, Cor.~7.4]{Stevenhagen}.

If $q\mid m$, choose the prime of $K_p$ above $q$ at which $\eta$ is a
unit.  The residue fields do not change on passing to
$\Q(\sqrt p,\sqrt b)$: its quadratic extension at this prime is ramified.  In its residue field $\eta\equiv4\gamma\pmod q$.
If $q=p$, the residue is $2\gamma\pmod p$, and $b$ is a local
square at $p$.  Thus this prime also has residue field $\F_p$.
The Frobenius on the quadratic extension defined by $t\eta$ is therefore
the quadratic character of $\gamma$: both $t$ and $2$ are squares modulo
every prime dividing $n$.  This proves \eqref{eq:gamma-redei}, using the
local description of the R\'edei symbol
\cite[Eq.~(47)]{Stevenhagen}.  When $m=b=1$, parity gives
$\alpha=2A$, $\beta=2B$, and $\gamma$ odd, with
$pA^2+B^2=\gamma^2$ and $\gcd(B,\gamma)=1$.
If $B$ is even, the coprime factors $\gamma-B$ and $\gamma+B$
are a square and $p$ times a square, in some order; hence $\gamma$
is a square divided by $2$ modulo $p$.
If $B$ is odd, apply the same argument to the coprime factors
$(\gamma-B)/2$ and $(\gamma+B)/2$, whose product is $p(A/2)^2$.
Their sum is $\gamma$, which is a square modulo $p$.
Since $(2/p)=1$, both cases give $[\gamma/p]=0$.
\end{proof}

\begin{proposition}[The dyadic character]\label{prop:ci-redei}
Let $\ell\equiv1\pmod8$ be prime and put
$e_\ell=(\ell-1)/8\pmod2$.  Then
\begin{equation}\label{eq:ci-redei}
c_\ell=[2,-1,\ell]_{\mathrm R}=[-1,\ell,2]_{\mathrm R},
\qquad
c_\ell+\left[\frac2\ell\right]_4=e_\ell.
\end{equation}
Moreover,
\begin{equation}\label{eq:ci-class}
c_\ell\equiv h(-\ell)/4\pmod2.
\end{equation}
\end{proposition}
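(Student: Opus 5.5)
The plan is to prove the three assertions in the order: the elementary identity $c_\ell+[2/\ell]_4=e_\ell$ first, then the R\'edei interpretation, then the class number statement. Throughout, fix $r=r_\ell$ with $r^2\equiv2\pmod\ell$. Choose a primitive $8$th root of unity $\zeta\in\F_\ell$, which exists since $\ell\equiv1\pmod8$. After possibly replacing $r$ by $-r$ we may assume $r=\zeta+\zeta^{-1}$. This is harmless because $c_\ell$ was already shown to be independent of the sign of the root.

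\emph{Step 1: the quartic identity.} Since $r^2=2$, Euler's criterion gives
\[
2^{(\ell-1)/4}=r^{(\ell-1)/2}.
\]
Hence $[2/\ell]_4=[r/\ell]$, and therefore $c_\ell+[2/\ell]_4=[r(1+r)/\ell]=[(2+r)/\ell]$. Next, $(1+\zeta)(1+\zeta^{-1})=2+\zeta+\zeta^{-1}=2+r$. Also $1+\zeta^{-1}=\zeta^{-1}(1+\zeta)$, so
\[
2+r=\zeta^{-1}(1+\zeta)^2 .
\]
The Legendre character of $2+r$ therefore equals that of $\zeta^{-1}$, namely $\zeta^{-(\ell-1)/2}=(-1)^{(\ell-1)/8}$. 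This gives $c_\ell+[2/\ell]_4=e_\ell$. If $1+\zeta=0$ the argument would need adjusting, but that cannot happen because $\zeta$ has order $8$.

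\emph{Step 2: the R\'edei symbol.} The element $\eta=1+\sqrt2\in\Q(\sqrt2)$ has norm $-1$, so $F=\Q(\sqrt2,\sqrt{-1},\sqrt{\eta})$ is a R\'edei extension for the pair $(2,-1)$. Both $(2,-1)_v=1$ for all $v$, and $\eta$ is a unit, so $F/\Q(\sqrt2,\sqrt{-1})$ is unramified outside $2$. I would check, using \cite[Props.~7.2--7.3, Cor.~7.4]{Stevenhagen} as in the proof of Lemma~\ref{lem:norm-redei}, that $F$ (or a twist by $t\in\{\pm1,\pm2\}$) is minimally ramified. Twisting by such $t$ does not change Frobenius at $\ell$, since every such $t$ is a square mod $\ell$.

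The triple $(2,-1,\ell)$ is admissible. Indeed $\ell\equiv1\pmod8$ gives the local symbol conditions, and the discriminants $8,-4,\ell$ have gcd $1$. The prime $\ell$ splits completely in $\Q(\zeta_8)=\Q(\sqrt2,\sqrt{-1})$. Its Frobenius in $F$ is trivial exactly when $1+\sqrt2$ is a square modulo a prime above $\ell$, i.e.\ when $[(1+r)/\ell]=0$. The local description \cite[Eq.~(47)]{Stevenhagen} thus gives $[2,-1,\ell]_{\mathrm R}=c_\ell$. The equality with $[-1,\ell,2]_{\mathrm R}$ is the symmetry in \cite[Thm.~7.9]{Stevenhagen}.

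\emph{Step 3: the class number.} This is the main obstacle. Genus theory for the discriminant $-4\ell$ gives $2$-rank $1$. The $4$-rank is also $1$, by Zhang's formula \cite[Thm.~2.8(1)]{Zhang26} with $A_\ell=0$, or by R\'edei's criterion since $(2/\ell)=1$ and $(-1/\ell)=1$. Hence the $2$-class group is cyclic of order at least $4$, and $h(-\ell)/4\bmod2$ records whether the $8$-rank is $0$ or $1$.

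The ambiguous class of order $2$ is the class of the prime above $2$, and it lies in $2\mathrm{Cl}$. Its decomposition in the unramified cyclic quartic extension of $\Q(\sqrt{-\ell})$ decides whether it lies in $4\mathrm{Cl}$. That extension is the R\'edei field for the factorization $-4\ell=(-4)\cdot\ell$, namely the R\'edei extension of $(-1,\ell)$. So the $8$-rank is $1$ iff the Artin symbol of $2$ in it is trivial, i.e.\ iff $[-1,\ell,2]_{\mathrm R}=0$. This is Stevenhagen's $8$-rank criterion in the form used in the introduction. Combining with Step 2 gives $h(-\ell)/4\equiv1+[-1,\ell,2]_{\mathrm R}$ or $\equiv[-1,\ell,2]_{\mathrm R}$, depending on the normalization, and this is the delicate point.

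I would fix the normalization by a single example. For $\ell=17$ one has $c_{17}=1$ and $h(-17)=4$, so $h/4=1=c_{17}$, consistent with the claim. Equivalently, one can cross-check against the classical criterion that $8\mid h(-4\ell)$ iff $\ell=x^2+32y^2$, together with the Barrucand--Cohn form of Step 1.
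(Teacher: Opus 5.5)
Your proof is correct. Steps 1 and 2 are essentially the paper's argument. The paper gets $[(2+\sqrt2)/\ell]=e_\ell$ from $\sqrt{2+\sqrt2}=\zeta_{16}+\zeta_{16}^{-1}=\zeta_{16}^{-1}(1+\zeta_8)$ and the action of Frobenius according to $\ell \bmod 16$. That is your identity $2+r=\zeta^{-1}(1+\zeta)^2$, read in $\Q(\zeta_{16})$ instead of in $\F_\ell$; your finite-field version is slightly more self-contained. The R\'edei computation with $1+\sqrt2$ and dyadic twists is identical to the paper's.

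For \eqref{eq:ci-class} you take a genuinely different route. The paper cites Barrucand--Cohn ($c_\ell=0$ iff $8\mid h(-\ell)$) together with the $4$-rank from Zhang. You instead prove an $8$-rank criterion directly:
\begin{itemize}
\item the class of order $2$ is $[\mathfrak p_2]$;
\item it lies in $\mathrm{Cl}^4$ iff its Artin symbol in the unramified cyclic quartic extension of $\Q(\sqrt{-\ell})$ is trivial;
\item since $[\mathfrak p_2]\in\mathrm{Cl}^2$, the prime $\mathfrak p_2$ splits in the genus field $\Q(i,\sqrt\ell)$, so this is the Artin symbol of a prime of norm $2$ there, namely $[-1,\ell,2]_{\mathrm R}$. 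This passage is worth writing out.
\end{itemize}
Step 2 then converts $[-1,\ell,2]_{\mathrm R}$ into $c_\ell$. In effect you rederive the Barrucand--Cohn criterion from R\'edei reciprocity. This explains why the symmetric form $[-1,\ell,2]_{\mathrm R}$ appears in the statement and keeps the proposition inside the R\'edei framework. The cost is twofold. For this part your route uses the symmetry theorem, which the paper's does not. You also must check that the minimally ramified R\'edei field for $(-1,\ell)$ is unramified over $\Q(\sqrt{-\ell})$, including above $2$, so that it really is the cyclic quartic class field. The paper's route is shorter but rests on an external classical theorem.

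One correction to your final paragraph: no normalization ambiguity remains. Your chain already reads
\[
8\mid h(-\ell)\iff[\mathfrak p_2]\in\mathrm{Cl}^4\iff[-1,\ell,2]_{\mathrm R}=0\iff c_\ell=0 .
\]
Together with $4\mid h(-\ell)$, this is exactly $h(-\ell)/4\equiv c_\ell\pmod 2$. The case $\ell=17$ is a good sanity check but should not be a step of the proof. An example can fix a global convention, but not a discrepancy that might depend on $\ell$.
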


\begin{proof}
The norm identity $N_{\Q(\sqrt2)/\Q}(1+\sqrt2)=-1$ gives the R\'edei
extension for $(2,-1)$.  Its generator is a unit, so it has no odd
ramification.  Its quadratic character at $\ell$ is $c_\ell$; the
admissible dyadic twists have trivial character at $\ell$.
R\'edei symmetry proves the first equality.

The real cyclotomic element
$\zeta_{16}+\zeta_{16}^{-1}=\sqrt{2+\sqrt2}$ changes sign under Frobenius
at $\ell\equiv9\pmod{16}$ and is fixed at $\ell\equiv1\pmod{16}$.
Consequently $[(2+\sqrt2)/\ell]=e_\ell$.  Since
$2+\sqrt2=\sqrt2(1+\sqrt2)$ and
$[\sqrt2/\ell]=[2/\ell]_4$, the second identity follows.
Finally \cite{BarrucandCohn} gives
$c_\ell=0$ if and only if $8\mid h(-\ell)$.
The $4$-rank is one by \cite[Thm.~2.8(1)]{Zhang26}, so
$4\mid h(-\ell)$ and \eqref{eq:ci-class} follows.
\end{proof}

For $i\ne j$ define
\[
b_{ij}=\left[\frac{p_j}{p_i}\right]_4,
\quad R_{ij}=[2,p_i,p_j]_{\mathrm R},
\quad T_{ij}=\sum_{\ell\ne i,j}[p_i,p_j,p_\ell]_{\mathrm R}.
\]
An empty sum is zero.  Put
$D_2=\operatorname{diag}([2/p_1]_4,\ldots,[2/p_k]_4)$.

\begin{theorem}[R\'edei block matrices]\label{thm:redei-block}
Under \eqref{eq:general-hyp}, the blocks in \eqref{eq:block-C} satisfy
\begin{equation}\label{eq:redei-blocks}
\begin{aligned}
X_{ij}&=R_{ij},& Z_{ij}&=b_{ij}+b_{ji} &&(i\ne j),\\
Y_{ij}&=b_{ij}+T_{ij} &&&&(i\ne j),\\
Y_{ii}&=c_i+\sum_{j\ne i}b_{ij},&X_{ii}&=Z_{ii}=0.
\end{aligned}
\end{equation}
In the same ordered basis, the Cassels matrix for $E_{2n}$ is
\begin{equation}\label{eq:even-block-redei}
C_{2n}=\begin{pmatrix}
X&Y+X+D_2\\
Y^T+X+D_2&Z
\end{pmatrix}.
\end{equation}
Thus these matrices are expressed entirely through rational quartic and
R\'edei symbols.
\end{theorem}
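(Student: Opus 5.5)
The plan is to substitute Lemma~\ref{lem:norm-redei} into each entry of Proposition~\ref{prop:block-matrix}, and then use multilinearity of the R\'edei symbol to reduce everything to quartic symbols and the $R_{ij}$. Throughout, write $m_i=n/p_i=\prod_{l\ne i}p_l$. The three norm equations for index $i$ have $b=m_i$, $b=-m_i$ and $b=-2m_i$.

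\textbf{Step 1: translate each $\gamma$-symbol into a R\'edei symbol.} By Lemma~\ref{lem:norm-redei}, for every $q\mid n$:
\[
[\gamma_i/q]=[p_i,m_i,q]_{\mathrm R},\qquad
[\gamma_i'/q]=[p_i,-m_i,q]_{\mathrm R},\qquad
[\gamma_i''/q]=[p_i,-2m_i,q]_{\mathrm R}.
\]
Linearity in the middle slot expands $m_i$ as $\sum_{l\ne i}p_l$. It leaves only the extra terms $[p_i,-1,q]_{\mathrm R}$ and $[p_i,2,q]_{\mathrm R}$.

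\textbf{Step 2: identify the basic symbols.} The basic symbols are as follows.
\begin{itemize}
\item Since $-1$ is a norm from $\Q(\sqrt{p_i})$ (as $p_i\equiv1\pmod4$), $[p_i,-1,q]_{\mathrm R}=0$ for $q\ne p_i$. One checks this with the extension $\Q(\sqrt{p_i},i,\sqrt{\eta})$ and Proposition~\ref{prop:ci-redei}-type reasoning.
\item $[p_i,p_j,p_j]_{\mathrm R}=[p_i,p_j,-1]_{\mathrm R}=0$, by the admissible identity $[a,b,b]=[a,b,-1]$.
\item $[p_i,p_i,q]_{\mathrm R}=[p_i,-1,q]_{\mathrm R}$ by the same identity.
\item The classical R\'edei evaluation gives $[p_i,p_j,p_i]_{\mathrm R}=[p_j/p_i]_4+[p_i/p_j]_4$, or, in the correct slot, $[p_j,p_i,p_i]=b_{ij}+b_{ji}$.
\item $[p_i,q,p_j]_{\mathrm R}$ is equal to $b_{ij}$ or $b_{ji}$, according to where the duplicated prime sits.
\item $[2,p_i,p_i]_{\mathrm R}=[2/p_i]_4$-type terms, and $[p_i,-1,p_i]_{\mathrm R}=c_i$ by Proposition~\ref{prop:ci-redei}.
\end{itemize}

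\textbf{Step 3: assemble the odd blocks.} For $Z_{ij}$, the sum $[\gamma_i\gamma_i'/p_j]=[p_i,m_i,p_j]+[p_i,-m_i,p_j]$ collapses to $[p_i,-1,p_j]$ plus the $p_j$-diagonal contribution $b_{ij}+b_{ji}$; the mixed $T$ terms cancel in pairs. For $X_{ij}$, the product $\gamma_i'\gamma_i''$ gives $[p_i,2,p_j]=R_{ij}$ after the $m_i$ pieces cancel. For $Y_{ij}$ with $i\ne j$, one obtains $[p_i,-m_i,p_j]=\sum_{l\ne i}[p_i,p_l,p_j]$. The term $l=j$ gives $b_{ij}$, and the terms $l\ne i,j$ give $T_{ij}$. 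For $Y_{ii}$, Step 2 gives $[\gamma_i'/p_i]=\sum_{j\ne i}[p_i,p_j,p_i]_{\mathrm R}=\sum_{j\ne i}b_{ij}$. One can cross-check this against the alternation identity \eqref{eq:zhdiag}.

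\textbf{Step 4: the even curve.} For $E_{2n}$, Lemma~\ref{lem:pairing-formulas} with $\varepsilon=2$ gives the same formulas, with $m_i$ replaced by $2m_i$. The extra linear term $[p_i,\pm2,q]$ adds $R_{ij}$ off the diagonal and $[2/p_i]_4$-type diagonal terms to $Y$; this gives $Y+X+D_2$. It is absent from $X$, where the two factors of $2$ cancel ($2\cdot4=8\equiv2$ up to squares, so one still gets $[p_i,2,p_j]$). It is also absent from $Z$, where the $2$'s pair off.

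\textbf{Main obstacle.} The main obstacle is the sign and slot bookkeeping in Step 2: deciding exactly which of $b_{ij}$ or $b_{ji}$ arises in each degenerate symbol $[p_i,p_j,p_i]$, and justifying the degenerate symbols with a repeated prime, which fall outside the strict admissibility conditions. I would handle these by computing them directly from explicit R\'edei extensions, as in the proof of Lemma~\ref{lem:norm-redei}. The $k=2$ case, checked against Section~\ref{sec:four} (giving $u=U$, $v=V$, $\delta=R$), serves as a consistency check.
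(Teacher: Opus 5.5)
Your strategy is the paper's: substitute Lemma~\ref{lem:norm-redei} into Proposition~\ref{prop:block-matrix} and expand by linearity. Your $X_{ij}$ computation and Step~4 for $E_{2n}$ agree with the paper. The gap is in Step~2. Several of your evaluations of the basic symbols are false, and your $Z$ and $Y$ formulas come out right only because errors cancel.

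\textbf{The claim $[p_i,-1,p_j]_{\mathrm R}=0$ is false.} That $-1$ is a norm from $\Q(\sqrt{p_i})$ is only the admissibility condition: it guarantees that a R\'edei extension for $(p_i,-1)$ exists. The symbol is the Frobenius of $p_j$ in that extension, and it is generally nonzero. Linearity gives exactly
\[
Z_{ij}=[p_i,m_i,p_j]_{\mathrm R}+[p_i,-m_i,p_j]_{\mathrm R}=[p_i,-m_i^2,p_j]_{\mathrm R}=[-1,p_i,p_j]_{\mathrm R}.
\]
Every $m_i$-piece cancels, including the $l=j$ term, so the ``$p_j$-diagonal contribution $b_{ij}+b_{ji}$'' you add does not exist. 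Your first bullet would therefore force $Z=0$. For $(p,q)=(17,257)$ one has $[17/257]_4=0$ and $[257/17]_4=[2/17]_4=1$, so $Z_{12}=u+v=1$.

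\textbf{Other wrong values.} The identity $[a,b,b]=[a,b,-1]$ is not correct; what holds is $[a,b,-ab]_{\mathrm R}=0$ (Stevenhagen, Prop.~7.10), i.e.\ $[a,b,b]=[a,b,-a]$. Your bullets also contradict one another:
\begin{enumerate}[label=\textup{(\alph*)}]
\item Bullets 1 and 3 give $[p_i,p_i,p_j]_{\mathrm R}=0$.
\item Bullet 4 assigns the same symbol (by symmetry) the value $b_{ij}+b_{ji}$.
\item The true value is $[p_j/p_i]_4=b_{ij}$, by Stevenhagen's Definition~7.1.
\end{enumerate}
Likewise $[p_i,-1,p_i]_{\mathrm R}=[p_i,p_i,-1]_{\mathrm R}=[-1/p_i]_4=0$, not $c_i$. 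If it were $c_i$, your $Y_{ii}$ would lose its $c_i$. In fact the $c_i$ in $Y_{ii}$ comes directly from \eqref{eq:zh2}.

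\textbf{The missing input.} You need $[p,p,c]_{\mathrm R}=[c/p]_4$ together with $[p_i,p_j,-p_ip_j]_{\mathrm R}=0$. With symmetry these give
\[
[p_i,p_i,p_j]_{\mathrm R}=b_{ij},\qquad [p_i,p_j,p_j]_{\mathrm R}=b_{ji},\qquad [-1,p_i,p_j]_{\mathrm R}=b_{ij}+b_{ji},
\]
and the last of these is the $Z_{ij}$ formula. For $Y_{ij}$ the correct expansion is
\[
[p_i,-m_i,p_j]_{\mathrm R}=[p_i,-1,p_j]_{\mathrm R}+[p_i,p_j,p_j]_{\mathrm R}+T_{ij}=(b_{ij}+b_{ji})+b_{ji}+T_{ij}=b_{ij}+T_{ij}.
\]
You dropped the first term and labelled the second as $b_{ij}$ instead of $b_{ji}$; these two errors compensate. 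With the correct values, your Steps~3 and~4 go through as in the paper.

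One minor point: Lemma~\ref{lem:norm-redei} is stated only for $b\in\{\pm m,\pm2m\}$. For $E_{2n}$, first reduce the coefficient $-4m_i$ to $-m_i$ by absorbing the factor $2$ into $\beta_i''$; this changes $\gamma_i''$ only by a power of $2$.
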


\begin{proof}
Write $m_i=n/p_i$.  Lemma~\ref{lem:norm-redei} applied to the three norm
equations gives, for $i\ne j$,
\begin{align*}
X_{ij}&=[p_i,-m_i,p_j]_{\mathrm R}+[p_i,-2m_i,p_j]_{\mathrm R}
       =[2,p_i,p_j]_{\mathrm R},\\
Y_{ij}&=[p_i,-m_i,p_j]_{\mathrm R},\\
Z_{ij}&=[p_i,m_i,p_j]_{\mathrm R}+[p_i,-m_i,p_j]_{\mathrm R}
       =[-1,p_i,p_j]_{\mathrm R}.
\end{align*}
For the prime arguments used here, repeated symbols satisfy
$[p,p,c]_{\mathrm R}=[c/p]_4$
\cite[Def.~7.1]{Stevenhagen}.  Also
$[p_i,p_j,-p_ip_j]_{\mathrm R}=0$
\cite[Prop.~7.10]{Stevenhagen}.  Hence
\[
[-1,p_i,p_j]_{\mathrm R}=b_{ij}+b_{ji}.
\]
Expanding $-m_i=-1\cdot p_j\prod_{\ell\ne i,j}p_\ell$ gives
$Y_{ij}=b_{ij}+T_{ij}$.  On the diagonal, symmetry gives
\[
[p_i,-m_i,p_i]_{\mathrm R}
=[p_i,p_i,-m_i]_{\mathrm R}
=\left[\frac{-m_i}{p_i}\right]_4
=\sum_{j\ne i}b_{ij},
\]
since $[-1/p_i]_4=0$.

For $E_{2n}$, the relevant coefficients are $2m_i,-2m_i,-4m_i$.
In the third equation, replacing $2\beta_i''$ by a new coordinate
reduces $-4m_i$ to $-m_i$; primitive normalization changes the third
coordinate only by a power of $2$, invisible in all the quadratic symbols.
Linearity then gives the same blocks $X$ and $Z$.  Off the diagonal the
mixed block gains $[p_i,2,p_j]_{\mathrm R}=X_{ij}$, while on the diagonal
it gains $[p_i,2,p_i]_{\mathrm R}=[2/p_i]_4$.
This proves \eqref{eq:even-block-redei}.
\end{proof}

\begin{corollary}[The family $E_{2n}$]\label{cor:even-general}
Assume \eqref{eq:general-hyp} and let $r=\rk C_{2n}$.  Then
\[
r_s(n)\equiv h(-2n)+4^k\Pf(C_{2n})\pmod{2\cdot4^k},
\qquad
2^{3k-r/2}\mid r_s(n)-h(-2n).
\]
The Pfaffian is $1$ if and only if $E_{2n}$ has rank zero and
$\Sha(E_{2n})[2^\infty]\cong(\Z/2\Z)^{2k}$.
Equality in the valuation bound holds precisely in rank zero with
\[
\Sha(E_{2n})[2^\infty]\cong(\Z/2\Z)^r\oplus(\Z/4\Z)^{2k-r}.
\]
\end{corollary}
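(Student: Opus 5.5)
The plan is to mirror the proof of Theorem~\ref{thm:general}, with $E_{2n}$ in place of $E_n$ and Proposition~\ref{prop:qin}(iii) in place of Proposition~\ref{prop:qin}(ii).

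First, under \eqref{eq:general-hyp} the matrix $M_{2n}$ in \eqref{eq:Monsky-even} vanishes. The reason is that $A_n=0$ and $D_{n,2}=D_{n,-1}=0$, since every $p_i\equiv1\pmod 8$ and the $p_i$ are pairwise residues. Hence $s_2(2n)=2k$, and the classes \eqref{eq:general-basis} form a basis of $\Sel_2'(E_{2n})$. The Cassels matrix in this basis is $C_{2n}$ from \eqref{eq:even-block-redei}, by Theorem~\ref{thm:redei-block}; its entries come from Lemma~\ref{lem:pairing-formulas} with $\varepsilon=2$. The statement that $\Pf(C_{2n})=1$ exactly when $E_{2n}$ has rank zero and $\Sha(E_{2n})[2^\infty]\cong(\Z/2\Z)^{2k}$ is then Proposition~\ref{prop:Cassels-nondeg}.

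For the congruence, note that $\sigma_Q(2n)=\sigma_0(n)=2^k$ and $\Delta_Q(2n)=r_s(n)-r_t(n)$. Theorem~\ref{thm:main-principle} then gives
\[
2^{2k+1}\mid r_s(n)-r_t(n),\qquad \frac{r_s(n)-r_t(n)}{2^{2k+1}}\equiv\Pf(C_{2n})\pmod 2 .
\]
Since $n\equiv1\pmod 4$ is odd and squarefree, Proposition~\ref{prop:qin}(iii) gives $r_s(n)+r_t(n)=2h(-2n)$. Therefore $r_s(n)-r_t(n)=2\bigl(r_s(n)-h(-2n)\bigr)$. Dividing by $2$ yields $4^k\mid r_s(n)-h(-2n)$ and the stated congruence modulo $2\cdot4^k$. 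The case $n=1$ is not a concern here, because Proposition~\ref{prop:qin}(iii) carries no $m>1$ restriction.

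For the valuation bound, apply Theorem~\ref{thm:radical-depth} with $s=2k$ and $d=2k-r$. This gives $v_2(H_{2n})\ge 2k-r/2$, where $H_{2n}=\Delta_Q(2n)/2^{k+1}=(r_s(n)-h(-2n))/2^k$. Consequently
\[
v_2\bigl(r_s(n)-h(-2n)\bigr)\ge 3k-r/2 .
\]
The same theorem says equality holds exactly in rank zero with $\Sha(E_{2n})[2^\infty]\cong(\Z/2\Z)^{r}\oplus(\Z/4\Z)^{2k-r}$.

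The only real point to check is the input to Theorem~\ref{thm:main-principle} and Theorem~\ref{thm:radical-depth}: that $\sigma_Q(2n)$ and $\Delta_Q(2n)$ are the even-case quantities of \eqref{eq:defect}. Once that is confirmed, everything else is bookkeeping. The even case of Qin's formula in Proposition~\ref{prop:qin}(i) applies here, because $2n\equiv2\pmod 8$.
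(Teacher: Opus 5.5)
Your proposal is correct and is essentially the paper's argument. The paper likewise notes $s_2(2n)=2k$ and $\sigma_Q(2n)=2^k$, uses Proposition~\ref{prop:qin}(iii) to write $r_s(n)-h(-2n)=(r_s(n)-r_t(n))/2=2^kH_{2n}$, and then applies Theorems~\ref{thm:main-principle} and~\ref{thm:radical-depth}; you simply spell out the bookkeeping it leaves implicit (vanishing of $M_{2n}$, the substitution $d=2k-r$, and the appeal to Proposition~\ref{prop:Cassels-nondeg}).
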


\begin{proof}
Here $s_2(2n)=2k$, $\sigma_Q(2n)=2^k$, and
$r_s(n)-h(-2n)=(r_s(n)-r_t(n))/2=2^kH_{2n}$.
Apply Theorems~\ref{thm:main-principle} and \ref{thm:radical-depth}.
\end{proof}

\begin{theorem}[Formulas for $n=pq$]\label{thm:choice-free}
Assume \eqref{eq:layer}, and set
\[
U=\left[\frac pq\right]_4,\quad V=\left[\frac qp\right]_4,
\quad R=[2,p,q]_{\mathrm R},
\]
\[
e_p=\frac{p-1}{8}\pmod2,\qquad e_q=\frac{q-1}{8}\pmod2.
\]
Then $u=U$, $v=V$, $\delta=R$, and
\begin{align}
\Pi(p,q)&=c_pc_q+c_pU+c_qV+R(U+V),\label{eq:Pi-redei}\\
\Pi_2(p,q)&=R+e_pe_q+e_pU+e_qV.\label{eq:Pi2-redei}
\end{align}
Both formulas are independent of auxiliary solutions of norm equations.
\end{theorem}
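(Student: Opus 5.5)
The plan is to specialize Theorem~\ref{thm:redei-block} to $k=2$, with $(p_1,p_2)=(p,q)$, and then read off both Pfaffians from \eqref{eq:k2block}.

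\emph{Identification of $u,v,\delta$.} With $m_1=q$, Lemma~\ref{lem:norm-redei} gives
\[
u=[\gamma/q]=[p,q,q]_{\mathrm R}=[p/q]_4=U,
\]
using $[q,q,c]_{\mathrm R}=[c/q]_4$ and symmetry. Likewise
\[
v=[\gamma/p]=[p,q,p]_{\mathrm R}=[q/p]_4=V.
\]
For $\delta$ we have
\[
\delta=[\gamma'/q]+[\gamma''/q]=[p,-q,q]_{\mathrm R}+[p,-2q,q]_{\mathrm R}=[p,2,q]_{\mathrm R}=R,
\]
which is exactly the computation of $X_{12}$ in Theorem~\ref{thm:redei-block}. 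Substituting into \eqref{eq:pf4} gives \eqref{eq:Pi-redei}.

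\emph{The formula for $\Pi_2$.} We use \eqref{eq:even-block-redei}. For $k=2$ we have $T_{ij}=0$, $X_{12}=R$ and $Z_{12}=U+V$. The odd mixed block has
\[
Y_{11}=c_p+V,\quad Y_{22}=c_q+U,\quad Y_{12}=V,\quad Y_{21}=U.
\]
The even mixed block $Y+X+D_2$ therefore has diagonal $c_p+V+[2/p]_4$ and $c_q+U+[2/q]_4$. By Proposition~\ref{prop:ci-redei} these equal $e_p+V$ and $e_q+U$. Its off-diagonal entries are $V+R$ and $U+R$. Then \eqref{eq:k2block} gives
\[
\Pi_2=R(U+V)+(e_p+V)(e_q+U)+(V+R)(U+R).
\]
Over $\F_2$ we have $R^2=R$, so the $RU$ and $RV$ terms cancel and the two $UV$ terms cancel. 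This leaves $R+e_pe_q+e_pU+e_qV$, which is \eqref{eq:Pi2-redei}.

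\emph{Consistency with the definition \eqref{eq:Pi2}.} As a check one can match this against \eqref{eq:Pi2} directly. Lemma~\ref{lem:norm-redei} gives
\[
u_{\mathrm e}=[p,2q,q]_{\mathrm R}=U+R,\qquad v_{\mathrm e}=[p,-2q,q]_{\mathrm R}=V+R,
\]
\[
w_p=c_p+[p,-2q,p]_{\mathrm R}=c_p+V+[2/p]_4=e_p+V,
\]
and $\delta_{\mathrm e}=R$ after reducing $-4q$ to $-q$. This reproduces the same expansion.

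\emph{Choice independence.} The right-hand sides involve only quartic symbols, the R\'edei symbol $R$, and $c_p,c_q,e_p,e_q$, none of which depends on any auxiliary solution.

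\emph{Main obstacle.} The delicate point is the admissibility and linearity of the R\'edei symbols with arguments such as $-q$, $\pm2q$ and $-1$, together with the repeated-argument identity $[p,p,c]_{\mathrm R}=[c/p]_4$. These are needed to split $[p,\pm2q,q]_{\mathrm R}$ into $[p,q,q]_{\mathrm R}$ plus symbols with $-1$ and $2$. Here I would rely on the proof of Theorem~\ref{thm:redei-block}, including $[-1,p,q]_{\mathrm R}=U+V$ and $[-1/p]_4=0$; the latter holds because $p\equiv1\pmod8$.
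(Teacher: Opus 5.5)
Your proposal is correct and follows essentially the paper's own proof. You specialize the R\'edei block description of Theorem~\ref{thm:redei-block} to $k=2$, read off $u=U$, $v=V$, $\delta=R$, and expand both $4\times4$ Pfaffians via \eqref{eq:k2block}, using $c_\ell+[2/\ell]_4=e_\ell$ for the even mixed block. Your direct identification of $u,v,\delta$ through Lemma~\ref{lem:norm-redei}, instead of comparing with \eqref{eq:C4}, and your cross-check against \eqref{eq:Pi2} are harmless extras.
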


\begin{proof}
For $k=2$ the sums $T_{ij}$ vanish, and \eqref{eq:redei-blocks} gives
\[
X=\begin{pmatrix}0&R\\R&0\end{pmatrix},\quad
Y=\begin{pmatrix}c_p+V&V\\U&c_q+U\end{pmatrix},\quad
Z=\begin{pmatrix}0&U+V\\U+V&0\end{pmatrix}.
\]
Comparison with \eqref{eq:C4} proves the identifications and
\eqref{eq:Pi-redei}.  By \eqref{eq:ci-redei}, the even mixed block is
\[
Y+X+D_2=\begin{pmatrix}e_p+V&V+R\\U+R&e_q+U\end{pmatrix}.
\]
Its Pfaffian is $R(U+V)+(e_p+V)(e_q+U)+(V+R)(U+R)$,
which simplifies to \eqref{eq:Pi2-redei} in $\F_2$.
\end{proof}

The symbol $R$ can be computed from a single representation
$p=a^2-2b^2$.  If $r_q^2\equiv2\pmod q$, then
\begin{equation}\label{eq:redei-computation}
R=\left[\frac{a+br_q}{q}\right].
\end{equation}
Indeed $a+b\sqrt2$ has norm $p$ and yields the minimally ramified
R\'edei extension after a dyadic twist, which is invisible at $q$.
The two choices of $r_q$ give the same value because $(p/q)=1$.
The existence of this norm representation follows from the Euclidean
ring $\Z[\sqrt2]$ and its unit of norm $-1$.

\begin{proposition}[Quartic data are insufficient]\label{prop:quartic-insufficient}
There is no function $F:\F_2^4\to\F_2$ such that, for every pair satisfying
\eqref{eq:layer},
\[
\Pi(p,q)=F\!\left(c_p,c_q,
 \left[\frac pq\right]_4,\left[\frac qp\right]_4\right).
\]
The same assertion holds with $\Pi_2(p,q)$ in place of $\Pi(p,q)$.
\end{proposition}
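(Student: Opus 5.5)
Since $\Pi$ and $\Pi_2$ are given by \eqref{eq:Pi-redei} and \eqref{eq:Pi2-redei}, the plan is to exhibit two pairs $(p,q)$ and $(p',q')$ satisfying \eqref{eq:layer} with the same quartic data $(c_p,c_q,U,V)=(c_{p'},c_{q'},U',V')$ but different values of $\Pi$ (resp.\ $\Pi_2$). Any function $F$ would have to agree on the two inputs, which is impossible.

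For $\Pi$, \eqref{eq:Pi-redei} shows that two such pairs differ in $\Pi$ exactly when $R\neq R'$ and $U+V=1$. It therefore suffices to find two pairs with equal $(c_p,c_q,U,V)$, with $U\neq V$, and with opposite R\'edei symbols $R=[2,p,q]_{\mathrm R}$.

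For $\Pi_2$, the formula \eqref{eq:Pi2-redei} involves $e_p,e_q$. Using \eqref{eq:ci-redei}, $e_\ell=c_\ell+[2/\ell]_4$, so these are not functions of the quartic data alone. To keep the argument clean, I choose pairs that also agree in $(e_p,e_q)$, and hence in $[2/p]_4,[2/q]_4$. Then $\Pi_2-\Pi_2'=R-R'$, and any pair with the same $(c,U,V,e)$ but opposite $R$ works for both statements at once when $U\neq V$.

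Concretely, I would fix $p=17$ and search over primes $q\equiv1\pmod8$ with $(q/17)=1$ and a common residue class of $q$ modulo $16$ (this fixes $e_q$). Among these I look for two values with the same $c_q$, $U$, $V$, with $U\neq V$, but different $R$. The symbol $R$ is computed by \eqref{eq:redei-computation} from $17=5^2-2\cdot2^2$: $R=[(5+2r_q)/q]$.

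The table in Section~\ref{sec:four} already suggests candidates; for instance $(17,257)$ and $(17,281)$ both have $c_p=1$, $U+V=1$, and $\Pi=1$. The concrete witnesses, however, must be found by direct computation of the quartic residues. This search is the main obstacle: the argument is elementary once the witnesses exist, but the paper's own density theorem is what guarantees they do.

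As a conceptual backup, the governing-field Lemma~\ref{lem:density-fields} (stated later) shows that $R$ varies independently of the quartic data on a positive-density set of $q$. I would nonetheless prefer explicit numerical pairs, verified via \eqref{eq:redei-computation} and quartic-residue computations, so that the proposition does not depend on results stated after it.
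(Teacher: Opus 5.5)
Your reduction is correct, but it takes a different route from the paper.

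\textbf{The paper's route.} The paper exhibits two witness pairs for each Pfaffian: $(17,257),(17,457)$ for $\Pi$, and $(41,113),(41,353)$ for $\Pi_2$. It computes $c_p,c_q,[p/q]_4,[q/p]_4$ directly, and these agree within each pair. It then reads off the Pfaffians from the representation congruences \eqref{eq:mod32-intro} and \eqref{eq:even-cong2}, by enumerating $r_a$, $r_s$ and class numbers. This never uses the R\'edei identities of Theorem~\ref{thm:choice-free}; the values $R=1,0,0,1$ are only noted afterwards, so the argument doubles as an independent check on them. It does, however, rest on Theorem~\ref{thm:main-principle} and hence on the BSD input.

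\textbf{Your route.} You use \eqref{eq:Pi-redei} and \eqref{eq:Pi2-redei} to reduce the question to whether $R$ can vary while $(c_p,c_q,U,V)$, and also $(e_p,e_q)$, stay fixed, with $U\ne V$ needed for $\Pi$. That reduction is right. What you call a conceptual backup is in fact a complete proof, and you should present it as the proof. Theorem~\ref{thm:fixed-prime-density}, via Lemma~\ref{lem:density-fields} and Chebotarev, shows that for fixed $p$ each of the $32$ vectors $(e_q,c_q,U,V,R)$ occurs with density $1/256$. So both values of $R$ occur over every choice of $(e_q,c_q,U,V)$. The density theorem does not use this proposition, so there is no circularity, and the order of presentation in the paper is irrelevant.

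\textbf{What each buys.} Your route avoids BSD and numerical enumeration, and gives more: for every $p$, a positive density of witnesses. In exchange it depends on the governing-field computation and the R\'edei identities. The paper's route is concrete and independent of the R\'edei machinery.

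\textbf{Where your write-up falls short.} Your main line stops short of a proof, because you never produce the explicit pairs. Either commit to the density argument or cite concrete witnesses such as the paper's. Note that the paper's $\Pi$-pair has $257\equiv1$ and $457\equiv9\pmod{16}$, so it would not turn up in your search restricted to one class mod $16$. That restriction is only needed for $\Pi_2$, since $\Pi$ does not involve $e_q$.

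Your reading of the table in Section~\ref{sec:four} is also wrong. The pair $(17,281)$ has $u=v=1$ and $c_q=1$, so $U+V=0$, and its quartic data differ from those of $(17,257)$. In any case, two pairs with the same value $\Pi=1$ could never serve as witnesses.
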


\begin{proof}
The following examples suffice.  The last column is computed from
\eqref{eq:mod32-intro} and \eqref{eq:even-cong2}, using direct enumeration
of representations and reduced positive binary quadratic forms.
\[
\begin{array}{c|cc|cccc|c}
& p&q&c_p&c_q&[p/q]_4&[q/p]_4&\text{Pfaffian}\\ \hline
\Pi&17&257&1&0&0&1&1\\
\Pi&17&457&1&0&0&1&0\\ \hline
\Pi_2&41&113&0&0&1&0&1\\
\Pi_2&41&353&0&0&1&0&0
\end{array}
\]
For the first two rows,
\[
\begin{aligned}
(r_a(17\cdot257),h(-17\cdot257))&=(32,48),\\
(r_a(17\cdot457),h(-17\cdot457))&=(96,96).
\end{aligned}
\]
so the normalized odd defects have opposite parity.  For the last two rows,
\[
(r_s(41\cdot113),h(-2\cdot41\cdot113))=(128,144),
\]
\[
(r_s(41\cdot353),h(-2\cdot41\cdot353))=(144,144),
\]
so the normalized even defects again have opposite parity.  In each pair of rows, the four dyadic and quartic symbols agree.  Hence no such function
$F$ can exist.
\end{proof}

The values of $R$ in the four rows are $1,0,0,1$.  Thus the R\'edei
symbol distinguishes the rows in each pair.

To compute the joint densities, fix a prime $p\equiv1\pmod8$ and write
\[
\mathcal P_p=\{q\ne p:q\text{ is prime},\ q\equiv1\pmod8,\ (q/p)=1\}.
\]
Here density means natural density within the primes:
\[
\delta(\mathcal S)=\lim_{X\to\infty}
\frac{\#\{q\le X:q\in\mathcal S\}}{\pi(X)}.
\]
The set $\mathcal P_p$ has density $1/8$.  We first determine the field
intersections needed to distribute the five symbols
$e_q,c_q,U,V,R$.

Choose $a,b>0$ with $p=a^2-2b^2$, and put
\[
\epsilon=1+\sqrt2,\qquad \alpha=a+b\sqrt2,\qquad
F_p=\Q(i,\sqrt2,\sqrt p).
\]
Let $T_p$ be the unique quartic subfield of $\Q(\zeta_p)$.  It is cyclic
over $\Q$ and contains $\Q(\sqrt p)$.  Define
\begin{equation}\label{eq:density-fields}
\begin{aligned}
N_p&=F_p(\sqrt\epsilon,\sqrt[4]p,\sqrt\alpha),&
A_p&=\Q(\zeta_{16})T_p,\\
M_p&=A_pN_p,&B_p&=\Q(\zeta_{16p}).
\end{aligned}
\end{equation}

\begin{lemma}[Field intersections and conjugacy classes]\label{lem:density-fields}
These fields are Galois over $\Q$, and
\begin{equation}\label{eq:density-intersections}
[N_p:F_p]=8,\qquad N_p\cap A_p=N_p\cap B_p=F_p.
\end{equation}
In particular,
\begin{equation}\label{eq:density-degrees}
[M_p:\Q]=256,\qquad [B_pN_p:\Q]=64(p-1).
\end{equation}
For $G_p=\operatorname{Gal}(M_p/\Q)$, its center is
\[
H_p=\operatorname{Gal}(M_p/F_p)\cong(\Z/2\Z)^5,
\]
and its commutator subgroup has order $8$.  There are $32$ conjugacy
classes of size $1$ and $56$ conjugacy classes of size $4$.
For $\operatorname{Gal}(B_pN_p/\Q)$, the corresponding numbers are
$8(p-1)$ and $14(p-1)$.
\end{lemma}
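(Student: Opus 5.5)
The plan is to reduce everything to Kummer theory over the multiquadratic field $F_p=\Q(i,\sqrt2,\sqrt p)$. This field has degree $8$ and contains $\mu_4$, and it is Galois over $\Q$.

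\emph{Step 1: Kummer description of $M_p$ and $B_p$.} Since $\Q(\zeta_{16})\cap T_p=\Q$ (disjoint ramification), we have $[A_p:\Q]=32$ and $[A_p:F_p]=4$. Over $F_p$, $A_p$ is generated by $\sqrt{\zeta_8}$ and $\sqrt{\eta}$, where $\eta\in\Q(\sqrt p)$ is an element with $T_p=\Q(\sqrt p,\sqrt\eta)$; for instance $\eta=p+b'\sqrt p$ from a representation $p=a'^2+b'^2$, or a Gauss-period expression. Hence $M_p=F_p(\sqrt W)$, where
\[
W=\langle \epsilon,\ \sqrt p,\ \alpha,\ \zeta_8,\ \eta\rangle\subseteq F_p^\times/F_p^{\times2}.
\]
The core claim is that these five classes are independent, which gives $|W|=32$ and $\operatorname{Gal}(M_p/F_p)\cong(\Z/2\Z)^5$. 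From this, $[N_p:F_p]=8$, $N_p\cap A_p=F_p$ and $[M_p:\Q]=256$ are immediate, since $N_p$ and $A_p$ correspond to complementary subgroups of $W$. I will check independence by local invariants:
\begin{itemize}
\item The class of $\alpha$ has odd valuation at a prime above $p$ that is split in $\Q(\sqrt2)/\Q$. Because $\alpha\bar\alpha=p$, it has even valuation at the conjugate prime.
\item The class of $\sqrt p$ has odd valuation at every prime above $p$ in $\Q(\sqrt p)$, before extension to $F_p$. I need to track the ramification index of $p$ in $F_p/\Q$, which is $2$.
\item $\epsilon$ and $\zeta_8$ are units, and $\eta$ has a controlled valuation at $p$.
\item Among the units, I will separate $\epsilon$, $\zeta_8$ and $\epsilon\zeta_8$ (up to squares) using the dyadic place or the known degrees $\Q(\zeta_{16})$ and $\Q(\sqrt\epsilon)\not\subseteq$ abelian fields.
\end{itemize}
For $B_p=\Q(\zeta_{16p})$, the maximal $2$-elementary-over-$F_p$ subextension of $B_p$ is exactly $A_p$, because $\Q(\zeta_p)$ contributes only $T_p$ at $2$-power level $4$. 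This gives $N_p\cap B_p=N_p\cap A_p=F_p$ and $[B_pN_p:\Q]=8\cdot\varphi(16p)=64(p-1)$.

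\emph{Step 2: Galois structure.} Every generator of $W$ is Galois-stable modulo squares:
\begin{itemize}
\item $\bar\epsilon=-\epsilon^{-1}\equiv\epsilon$, since $-1=i^2$;
\item $\sigma(\sqrt p)=-\sqrt p\equiv\sqrt p$;
\item $\bar\alpha=p/\alpha\equiv\alpha$, because $p=(\sqrt p)^2$;
\item $\zeta_8\mapsto\zeta_8^{k}$ with $k$ odd, so $\zeta_8^k\equiv\zeta_8\cdot\zeta_8^{k-1}\equiv\zeta_8$, as $\zeta_8^{k-1}$ is a square of a power of $\zeta_{16}$ up to $i$;
\item similarly for $\eta$.
\end{itemize}
So $\operatorname{Gal}(F_p/\Q)$ acts trivially on $W$, and hence on $H_p$ by the Kummer pairing. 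Thus $H_p$ is central. Since $G_p/H_p\cong(\Z/2\Z)^3$ is abelian, $[G_p,G_p]\subseteq H_p$, and the commutator map induces an alternating bilinear map $\bar G\times\bar G\to H_p$, with $\bar G=G_p/H_p$. I will compute it by choosing lifts $\tilde\sigma,\tilde\tau$ and evaluating $[\tilde\sigma,\tilde\tau]$ on $\sqrt w$ for each $w\in W$, using formulas such as $\sigma\sqrt\epsilon=\sqrt{-1}\,\epsilon^{-1}\sqrt\epsilon^{\,?}$. The aim is to show that the three commutators of the basis of $\bar G$ are independent in $H_p$. This gives $|[G_p,G_p]|=8$ and shows that the center is exactly $H_p$.

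\emph{Step 3: class counts.} For $g\notin H_p$, the conjugacy class is $g\cdot\{[g,x]\}$, and $x\mapsto[g,x]$ is a homomorphism $\bar G\to H_p$ with kernel $\langle\bar g\rangle$. Its image is therefore of order $4$. This gives $32$ central classes and $(256-32)/4=56$ classes of size $4$. The same argument for $B_pN_p$ should work: its Galois group is a central extension whose center has order $8(p-1)$, namely $\operatorname{Gal}(B_pN_p/F_p)\cong(\Z/2)^3\times\operatorname{Gal}(B_p/F_p)$, abelian and acted on trivially. The commutator pattern is the same, giving $8(p-1)$ classes of size $1$ and $(64(p-1)-8(p-1))/4=14(p-1)$ classes of size $4$.

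\emph{Main obstacle.} The main obstacle is Step 1: proving the independence of the five square classes. This is especially delicate at the dyadic place, to separate $\epsilon$, $\zeta_8$ and $\eta$. I would first try a reduction to $\Q(\zeta_{16})$ and $\Q(\sqrt p)$ separately, using that $\Q(\sqrt\epsilon)$ is non-abelian over $\Q$. The second hardest point is the explicit commutator computation in Step 2.
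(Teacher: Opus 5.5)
There is a genuine gap: the two facts that carry the whole lemma are announced but never established. The first is the independence of $\epsilon,\sqrt p,\alpha,\zeta_8,\eta$ in $F_p^\times/F_p^{\times2}$, and the valuation method you propose cannot supply it. Your first bullet is false in $F_p$. The ideal $(\alpha)$ is a degree-one prime of $\Z[\sqrt2]$, and $x^2-p$ is Eisenstein there. So every prime $\mathfrak P$ of $F_p$ above $(\alpha)$ is ramified over it, and $v_{\mathfrak P}(\alpha)=2$. In fact $\epsilon$, $\zeta_8$, $\alpha$ and $\eta\sqrt p$ all have even valuation at every prime of $F_p$. For $\eta\sqrt p$ this is because $p$ is totally ramified in $T_p$, so $\eta$, like $\sqrt p$, has odd valuation above $p$, while $T_p/\Q(\sqrt p)$ is unramified elsewhere. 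Hence the mod-$2$ valuation map on the group generated by your five classes has rank one. Four of the five independence statements are thus deferred to a dyadic analysis that the proposal does not carry out. The second missing fact is the commutator computation in Step~2. You state only as an aim that the three basic commutators are independent in $H_p$, and the formula for $\sigma\sqrt\epsilon$ is left unfinished. Yet this computation is exactly what gives the center, $|[G_p,G_p]|=8$, and the kernel $\langle\bar g\rangle$ used in Step~3.

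The missing idea is that one explicit computation does both jobs and makes the treatment of $\zeta_8$ and $\eta$ unnecessary. Let $\tau_0,\tau_2,\tau_p\in\operatorname{Gal}(F_p/\Q)$ negate $i,\sqrt2,\sqrt p$, and put $\beta=\epsilon^r(\sqrt p)^s\alpha^t$ and $z=\sqrt\beta$. Since $\tau_2(\epsilon)=-\epsilon^{-1}$ and $\tau_2(\alpha)=p\alpha^{-1}$, lifts to any Galois extension containing $z$ satisfy $\widetilde\tau_0z=\pm z$, $\widetilde\tau_2z=\pm i^r(\sqrt p)^t\epsilon^{-r}\alpha^{-t}z$ and $\widetilde\tau_pz=\pm i^sz$. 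Therefore $[\widetilde\tau_0,\widetilde\tau_2]$, $[\widetilde\tau_0,\widetilde\tau_p]$ and $[\widetilde\tau_2,\widetilde\tau_p]$ act on $z$ by $(-1)^r,(-1)^s,(-1)^t$, regardless of the lifts chosen. Commutators are trivial in an abelian extension of $\Q$, so for $(r,s,t)\neq0$ the element $z$ lies in no abelian extension of $\Q$. Applied to $F_p$, this gives $[N_p:F_p]=8$. Applied to $A_p$ and $B_p$, it gives $N_p\cap A_p=N_p\cap B_p=F_p$. Then $[A_p:F_p]=4$ yields $[M_p:\Q]=256$ by degrees, without ever testing $\zeta_8$ or $\eta$. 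The same sign table shows that the commutator map $\bigwedge^2\F_2^3\to\operatorname{Gal}(N_p/F_p)$ is an isomorphism, which is precisely what Step~3 requires. The rest of your argument is sound: centrality via Galois-stable square classes, the class count, and your alternative proof of $N_p\cap B_p=F_p$ through the maximal elementary abelian $2$-subextension of $B_p/F_p$.
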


\begin{proof}
Let $\tau_0,\tau_2,\tau_p$ be the automorphisms of $F_p$ that change the
sign of $i,\sqrt2,\sqrt p$, respectively, and fix the other two generators.
For $r,s,t\in\{0,1\}$ put
\[
\beta=\epsilon^r(\sqrt p)^s\alpha^t,\qquad z=\sqrt\beta.
\]
The identities
\[
\tau_2(\epsilon)=-\epsilon^{-1},\qquad
\tau_2(\alpha)=p\alpha^{-1},\qquad
\tau_p(\sqrt p)=-\sqrt p
\]
show that $F_p(z)$ is Galois over $\Q$.  On $z$, lifts of the three
automorphisms act, up to signs, by
\[
\widetilde\tau_0(z)=z,\qquad
\widetilde\tau_2(z)=
\frac{i^r(\sqrt p)^t}{\epsilon^r\alpha^t}\,z,\qquad
\widetilde\tau_p(z)=i^s z.
\]
Their commutators therefore act as follows; the signs are unaffected by
the choices of lifts:
\begin{equation}\label{eq:density-commutators}
\begin{array}{c|ccc}
&[\widetilde\tau_0,\widetilde\tau_2]
&[\widetilde\tau_0,\widetilde\tau_p]
&[\widetilde\tau_2,\widetilde\tau_p]\\\hline
z&(-1)^r z&(-1)^s z&(-1)^t z.
\end{array}
\end{equation}
For example, $\tau_0$ changes the sign of $i^r$ in the second lift,
whereas $\tau_p$ changes the sign of $(\sqrt p)^t$.
If $\beta$ were a square in $F_p$, then $z\in F_p$ and the commuting
automorphisms $\tau_0,\tau_2,\tau_p$ would act on it by the same
formulas, up to signs.  The three commutator signs would have to be $1$,
forcing $r=s=t=0$.  Hence no nontrivial product of
$\epsilon,\sqrt p,\alpha$ is a square in $F_p$.  For a nontrivial
product, \eqref{eq:density-commutators} also shows that
$F_p(z)/\Q$ is nonabelian.  Kummer theory gives $[N_p:F_p]=8$.

The sign changes on the three square roots form a central subgroup
$\operatorname{Gal}(N_p/F_p)$.  Applied to each root separately,
\eqref{eq:density-commutators} shows that this subgroup is generated by
the three commutators.  It is consequently the full commutator subgroup
of $\operatorname{Gal}(N_p/\Q)$.  Hence $F_p$ is the largest subfield
of $N_p$ that is abelian over $\Q$.  Both $A_p$ and $B_p$ are abelian
over $\Q$ and contain $F_p$, proving \eqref{eq:density-intersections}.
This also proves that the three quadratic extensions over $B_p$
defined by the displayed radicals are linearly disjoint.

The fields $\Q(\zeta_{16})$ and $T_p$ are linearly disjoint, since
they lie in cyclotomic fields of coprime conductors.  Thus
$[A_p:\Q]=8\cdot4=32$, while $[B_p:\Q]=8(p-1)$.
Since $[F_p:\Q]=8$ and $[N_p:\Q]=64$, the intersection formula
gives \eqref{eq:density-degrees}.

The restriction map identifies $H_p$ with
\[
\operatorname{Gal}(A_p/F_p)\times\operatorname{Gal}(N_p/F_p)
\cong(\Z/2\Z)^2\times(\Z/2\Z)^3.
\]
It is central in $G_p$.  The three independent commutators in
\eqref{eq:density-commutators} identify the commutator map with
\[
\bigwedge\nolimits^2\F_2^3\longrightarrow\F_2^3,
\qquad x\wedge y\longmapsto [\widetilde x,\widetilde y],
\]
an isomorphism in the indicated bases.  For every nonzero $x$, the
map $y\mapsto x\wedge y$ has rank $2$.  An element outside $H_p$
therefore has a conjugacy class of size $4$, whereas every element of
$H_p$ is central.  This proves the description of the center and gives
$(256-32)/4=56$ remaining classes.
For $B_pN_p$, the same commutator map gives center
$\operatorname{Gal}(B_pN_p/F_p)$ of order $8(p-1)$ and
$14(p-1)$ remaining classes, each of size $4$.
\end{proof}

\begin{theorem}[Joint densities]\label{thm:fixed-prime-density}
As $q$ varies over $\mathcal P_p$, the vector
\[
(e_q,c_q,U,V,R)\in\F_2^5
\]
has the uniform limiting distribution.  Each of its $32$ values occurs
on a set of primes of density $1/256$.
For $s,t\in\F_2$, put
\[
\mathcal P_p^{s,t}=\{q\in\mathcal P_p:
\Pi(p,q)=s,\ \Pi_2(p,q)=t\}.
\]
Then
\begin{equation}\label{eq:joint-density}
\delta(\mathcal P_p^{s,t})
=\frac{4+(-1)^{s+c_p}
 +\mathbf1_{\{c_p=e_p\}}(-1)^{s+t}}{128}.
\end{equation}
Equivalently, the relative densities within $\mathcal P_p$ are
\[
\begin{array}{cc|cccc}
c_p&e_p&(0,0)&(0,1)&(1,0)&(1,1)\\\hline
0&0&3/8&1/4&1/8&1/4\\
0&1&5/16&5/16&3/16&3/16\\
1&0&3/16&3/16&5/16&5/16\\
1&1&1/4&1/8&1/4&3/8
\end{array}
\]
All four pairs occur for every fixed $p\equiv1\pmod8$.
\end{theorem}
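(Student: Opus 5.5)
The plan is to realize all five symbols $e_q,c_q,U,V,R$ as coordinates of the Frobenius of $q$ in the group $H_p=\operatorname{Gal}(M_p/F_p)$, to apply Chebotarev's theorem in $M_p$ using \cref{lem:density-fields}, and then to count points of $\F_2^5$ for each value of the pair of Pfaffians given by \cref{thm:choice-free}.

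\textbf{Frobenius description.} A prime $q\ne p$ lies in $\mathcal P_p$ exactly when it splits completely in $F_p=\Q(i,\sqrt2,\sqrt p)$. For such $q$, $\operatorname{Frob}_q$ lies in $H_p$, which is central by \cref{lem:density-fields}, so its conjugacy class is a single element. Ramification is harmless: $M_p$ is unramified outside $2p$ and $\infty$. Indeed $\epsilon$ is a unit, and $\alpha$ has norm $p$ from $\Q(\sqrt2)$. We identify $H_p\cong\operatorname{Gal}(A_p/F_p)\times\operatorname{Gal}(N_p/F_p)\cong\F_2^2\times\F_2^3$ and read off the coordinates as follows.
\begin{itemize}
\item The coordinate on $\Q(\zeta_{16})\supset F_p$ records whether $q\equiv1$ or $9\pmod{16}$, i.e.\ $e_q$.
\item The coordinate on $T_p$ records whether $q$ is a fourth power modulo $p$, i.e.\ $V=[q/p]_4$.
\item On $\sqrt\epsilon$ we get $[(1+r_q)/q]=c_q$, after reducing modulo a prime of $F_p$ above $q$ with residue field $\F_q$.
\item On $\sqrt[4]p=\sqrt{\sqrt p}$ we get $[\sqrt p/q]=[p/q]_4=U$.
\item On $\sqrt\alpha$ we get $[(a+br_q)/q]=R$, by \eqref{eq:redei-computation}.
\end{itemize}
Each of these values is independent of the chosen prime above $q$, since Frobenius is central here; this matches the independence of roots noted earlier. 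The map $q\mapsto(e_q,V,c_q,U,R)$ is therefore the coordinate vector of $\operatorname{Frob}_q\in H_p$. Chebotarev in $M_p$, with $[M_p:\Q]=256$, then gives density $1/256$ for each of the $32$ central elements. This proves uniformity, and summing over $H_p$ recovers $\delta(\mathcal P_p)=32/256=1/8$.

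\textbf{Counting.} By \eqref{eq:Pi-redei} and \eqref{eq:Pi2-redei}, with $p$ fixed,
\[
\Pi=c_pc_q+c_pU+c_qV+R(U+V),\qquad \Pi_2=R+e_pe_q+e_pU+e_qV.
\]
Then $\delta(\mathcal P_p^{s,t})=N_{s,t}/256$, where $N_{s,t}$ counts $x=(e_q,c_q,U,V,R)\in\F_2^5$ with $(\Pi,\Pi_2)=(s,t)$. I would compute this by the character sum
\[
N_{s,t}=\frac14\sum_{a,b\in\F_2}(-1)^{as+bt}S(a,b),\qquad S(a,b)=\sum_x(-1)^{a\Pi+b\Pi_2}.
\]
\begin{itemize}
\item $S(0,0)=32$.
\item $S(0,1)=0$, because $\Pi_2$ is linear in $R$ with coefficient $1$.
\item For $S(1,0)$: $e_q$ is free (factor $2$). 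Summing over $R$ forces $U=V$, and summing over $c_q$ then forces $V=c_p$. The result is $S(1,0)=8(-1)^{c_p}$.
\item For $S(1,1)$: summing over $R$ forces $U+V=1$, and summing over $e_q$ then forces $V=e_p$. This leaves the free variable $c_q$ and a sign depending on $c_p+e_p$, giving $S(1,1)=8\,\mathbf1_{\{c_p=e_p\}}\cdot(\pm1)$.
\end{itemize}
Assembling these gives \eqref{eq:joint-density}, provided the sign in $S(1,1)$ works out to $+1$. That sign should be double-checked against the table entries in the statement. The table of relative densities follows by multiplying by $8$. Every density is at least $2/128>0$, so all four pairs occur.

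\textbf{Main obstacle.} The delicate step is the Frobenius identification for $c_q$, $U$ and $R$. One must check that reduction at a degree-one prime of $F_p$ above $q$ sends $\sqrt2$ to a root $r_q$ and $\sqrt p$ to a square root of $p$ consistently. For $R$, one must also confirm that the symbol $[(a+br_q)/q]$ is independent of the prime above $q$; this independence is exactly what centrality of $\operatorname{Frob}_q$ in $H_p$ provides. The group-theoretic input is entirely supplied by \cref{lem:density-fields}.
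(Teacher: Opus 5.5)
Your proposal is correct and is essentially the paper's proof. It uses the same Frobenius coordinates, $(e_q,V)$ on $\operatorname{Gal}(A_p/F_p)$ and $(c_q,U,R)$ on $\operatorname{Gal}(N_p/F_p)$, the same centrality and independence from \cref{lem:density-fields}, Chebotarev in $M_p$, and the same character averages: your $S(a,b)$ is $32$ times the paper's $\mathbb E(-1)^{aP+bQ}$.

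The sign you left open in $S(1,1)$ is $+1$. Once the $R$-sum forces $U=V+1$, the exponent of $(-1)^{\Pi+\Pi_2}$ is $c_q(c_p+V)+e_q(e_p+V)+(c_p+e_p)(V+1)$. The $c_q$- and $e_q$-sums therefore force $V=c_p=e_p$, so $c_q$ is not actually free unless $c_p=e_p$. The residual term $(c_p+e_p)(V+1)$ then vanishes, which gives $S(1,1)=8\cdot\mathbf 1_{\{c_p=e_p\}}$.
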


\begin{proof}
The field $M_p$ is unramified outside $2p$.  Admissibility of $q$ means
that it splits completely in $F_p$, so its Frobenius lies in $H_p$.
The two coordinates on $\operatorname{Gal}(A_p/F_p)$ are
$e_q$, determined by $q$ modulo $16$, and $V$, determined by the image
of $q$ in the cyclic quartic quotient of $(\Z/p\Z)^\times$.
The three coordinates on $\operatorname{Gal}(N_p/F_p)$ are the signs
on $\sqrt\epsilon,\sqrt[4]p,\sqrt\alpha$, namely $c_q,U,R$.
Here the last identification follows from \eqref{eq:redei-computation}.
Lemma~\ref{lem:density-fields} proves that these five coordinates are
independent and that each element of $H_p$ is a singleton conjugacy
class.  Chebotarev's theorem \cite{Neukirch} assigns density $1/256$
to each class.  Their union has density $32/256=1/8$.

It remains to count the classes giving each pair of Pfaffians.
Write $P=\Pi(p,q)$, $Q=\Pi_2(p,q)$, and let $\mathbb E$ denote
the average over the $32$ symbol vectors.  Formula
\eqref{eq:Pi2-redei} gives $\mathbb E(-1)^Q=0$, by averaging over $R$.
In \eqref{eq:Pi-redei}, averaging over $R$ leaves only $U=V$;
averaging over $c_q$ then leaves $U=V=c_p$.  Consequently
\[
\mathbb E(-1)^P=\frac{(-1)^{c_p}}4.
\]
For $P+Q$, averaging over $R$ leaves $U+V=1$.  The coefficients of
$c_q$ and $e_q$ are then $c_p+V$ and $e_p+V$, respectively.
Averaging over these two variables leaves $V=c_p=e_p$, and the
remaining exponent is zero.  Thus
\[
\mathbb E(-1)^{P+Q}=\frac{\mathbf1_{\{c_p=e_p\}}}{4}.
\]
The identity
\[
\mathbf1_{\{P=s,Q=t\}}
=\frac14(1+(-1)^{P+s})(1+(-1)^{Q+t})
\]
now gives the stated relative densities.  More explicitly, the number
of singleton Frobenius classes for the pair $(s,t)$ is
\[
2\bigl(4+(-1)^{s+c_p}
 +\mathbf1_{\{c_p=e_p\}}(-1)^{s+t}\bigr).
\]
Dividing by $256$ proves \eqref{eq:joint-density}.
\end{proof}

\begin{corollary}[Independence and simultaneous noncongruence]\label{cor:density-patterns}
Within $\mathcal P_p$, the limiting proportions for
$\Pi(p,q)=1$ and $\Pi_2(p,q)=1$ are, respectively,
\[
\frac{4-(-1)^{c_p}}8\qquad\text{and}\qquad\frac12.
\]
The two Pfaffians are independent in their limiting distribution if
and only if $2$ is not a fourth power modulo $p$.
For $p=17$, the four prime densities, in the order
$(0,0),(0,1),(1,0),(1,1)$, are
\begin{equation}\label{eq:p17-densities}
\frac3{128},\qquad\frac3{128},\qquad
\frac5{128},\qquad\frac5{128}.
\end{equation}
In particular, the set of primes $q\in\mathcal P_{17}$ for which
\[
\rk E_{17q}(\Q)=\rk E_{34q}(\Q)=0,
\]
\[
\Sha(E_{17q})[2^\infty]\cong\Sha(E_{34q})[2^\infty]
\cong(\Z/2\Z)^4
\]
has density $5/128$.
\end{corollary}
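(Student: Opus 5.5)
This corollary follows from \cref{thm:fixed-prime-density} by adding up the relative densities in its table and reading off the resulting cases. \textbf{Marginals.} Sum \eqref{eq:joint-density} over $t$ and then over $s$. Summing over $t$ kills the $\mathbf1_{\{c_p=e_p\}}$ term, which leaves $\delta(\Pi=s)=2(4+(-1)^{s+c_p})/128$. Dividing by $\delta(\mathcal P_p)=1/8$ gives the relative proportion $(4+(-1)^{s+c_p})/8$. For $s=1$ this is $(4-(-1)^{c_p})/8$. The same computation with $s$ and $t$ interchanged kills both signed terms, so $\Pi_2=1$ has relative proportion $1/2$. Alternatively, these marginals are the averages $\mathbb E(-1)^P=(-1)^{c_p}/4$ and $\mathbb E(-1)^Q=0$ already computed in the proof of \cref{thm:fixed-prime-density}.

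\textbf{Independence.} Let $P=\Pi(p,q)$ and $Q=\Pi_2(p,q)$, and write the joint distribution as
\[
\tfrac14\bigl(1+(-1)^s\mathbb E(-1)^P+(-1)^t\mathbb E(-1)^Q+(-1)^{s+t}\mathbb E(-1)^{P+Q}\bigr).
\]
Since $\mathbb E(-1)^Q=0$, the two Pfaffians are independent if and only if $\mathbb E(-1)^{P+Q}=\mathbb E(-1)^P\,\mathbb E(-1)^Q=0$. From the proof of the theorem, $\mathbb E(-1)^{P+Q}=\mathbf1_{\{c_p=e_p\}}/4$, so independence holds exactly when $c_p\neq e_p$. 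By \eqref{eq:ci-redei}, $c_p+[2/p]_4=e_p$, so $c_p\ne e_p$ is equivalent to $[2/p]_4=1$, that is, $2$ is not a fourth power modulo $p$. This translation is the only conceptual step; everything else is arithmetic.

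\textbf{The case $p=17$.} Here $e_{17}=(17-1)/8=2\equiv0\pmod 2$. For $c_{17}$, take $r=6$, since $36\equiv2\pmod{17}$. Then $c_{17}=[7/17]$, and $(7/17)=(17/7)=(3/7)=-1$, so $c_{17}=1$. This agrees with the table in \cref{sec:four}. So $(c_p,e_p)=(1,0)$, and the corresponding row of the table gives relative densities $3/16,3/16,5/16,5/16$. Multiplying by $1/8$ gives \eqref{eq:p17-densities}. Consistently with the previous paragraph, $2\equiv 6^2$ is not a fourth power modulo $17$, since $6$ is a nonresidue.

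\textbf{Simultaneous noncongruence.} The pair $(1,1)$ has density $5/128$. By \cref{thm:intro-four} and \cref{thm:even4}, $\Pi=1$ and $\Pi_2=1$ are each equivalent to rank zero together with $\Sha[2^\infty]\cong(\Z/2\Z)^4$, for $E_{17q}$ and $E_{34q}$ respectively. Hence the described set is exactly $\mathcal P_{17}^{1,1}$, and its density is $5/128$.
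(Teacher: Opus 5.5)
Your proposal is correct and follows the paper's proof. You obtain the marginals by summing \eqref{eq:joint-density}, get independence exactly when the correlation term $\mathbf1_{\{c_p=e_p\}}/4$ vanishes, translate this via $c_p+[2/p]_4=e_p$, evaluate $(c_{17},e_{17})=(1,0)$, and identify the $(1,1)$ set through Theorems~\ref{thm:intro-four} and~\ref{thm:even4}. Your Fourier expansion just spells out the product criterion that the paper states in one line.
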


\begin{proof}
Summing over the other Pfaffian gives the marginal proportions.
The joint distribution is their product exactly when $c_p\ne e_p$.
By \eqref{eq:ci-redei}, this is equivalent to $[2/p]_4=1$.
For $p=17$ one has $c_p=1$ and $e_p=0$, which gives
\eqref{eq:p17-densities}.  The final assertion follows from
Theorems~\ref{thm:intro-four} and \ref{thm:even4}.
\end{proof}

The last density concerns the specified $2$-primary groups.  It gives
a lower bound for the lower prime density of the larger set on which
$17q$ and $34q$ are both noncongruent; the density of that larger set
is not determined here.

Theorems~\ref{thm:radical-depth} and \ref{thm:redei-block}, together with
Corollary~\ref{cor:next-groups}, determine the groups in the table below.
The representation numbers and class numbers were computed by independent
enumeration.  Put $D=r_a(pq)-h(-pq)$, so that $H_{pq}=D/4$.
\begin{center}
\begin{tabular}{rrrrl}
\toprule
$p$&$q$&$\rk C_{pq}$&$D$&$\Sha(E_{pq})[2^\infty]$\\
\midrule
17&1889&2&$-32$&$(\Z/2\Z)^2\oplus(\Z/4\Z)^2$\\
41&5273&0&$64$&$(\Z/4\Z)^4$\\
17&3041&2&$64$&$(\Z/2\Z)^2\oplus(\Z/8\Z)^2$\\
\bottomrule
\end{tabular}
\end{center}
All three curves have rank zero because their defects are nonzero.
Here $H_{pq}^2$ is a power of $2$, so the displayed groups are in fact the
full Shafarevich--Tate groups.
The last two groups both have order $256$.  The different ranks of their
Cassels pairings distinguish their group structures.

As a consistency check, we carried out independent finite computations for
all $1453$ pairs with $pq<600000$ and all $28$ triples with
$pqr<2500000$.  These checks verify the representation congruences, the
R\'edei matrices, the radical bounds, and the examples above.  For the $77$
pairs with $pq<40000$ and all $28$ triples, the relevant Cassels matrices
were also recovered independently from primitive norm solutions.  We also
checked the $32$ symbol vectors occurring in the degree $256$ governing field
calculation.  None of these finite computations is used in the proofs.

For a nonzero defect, \eqref{eq:excess-depth} determines the sum of the
exponents beyond the contribution detected by $\Sel_4$.  In general, this
sum and the pairing rank do not determine all elementary divisors.
Further descent is needed to distinguish them.

\appendix
\renewcommand{\thetheorem}{\Alph{section}.\arabic{theorem}}
\section{The families \texorpdfstring{$15p$ and $30p$}{15p and 30p}}\label{app:fifteen}

The primes $3$ and $5$ lie outside the hypotheses of
Section~\ref{sec:general}.  Nevertheless, the general results of
Section~\ref{sec:framework} apply to these families.

\begin{proposition}[Pure Selmer dimensions]\label{prop:fifteen-selmer}
Let $p\nmid30$ be prime.  In the following table the columns are ordered
by the two multiplicative Legendre symbols $((3/p),(5/p))$:
\[
\begin{array}{c|c|cccc}
n&p\bmod8 &(+,+)&(+,-)&(-,+)&(-,-)\\\hline
15p&5&2&0&0&0\\
15p&7&2&0&2&0\\
30p&3&2&0&0&0\\
30p&7&2&0&0&2
\end{array}
\]
Every entry is the exact value of $s_2(n)$.  Consequently,
\begin{enumerate}[label=\textup{(\roman*)}]
\item if $p\equiv5\pmod8$ and $15p$ is congruent, then
$(3/p)=(5/p)=1$, equivalently $p\equiv61,109\pmod{120}$;
\item if $p\equiv7\pmod8$ and $15p$ is congruent, then
$(5/p)=1$, equivalently $p\equiv31,39\pmod{40}$;
\item if $p\equiv3\pmod8$ and $30p$ is congruent, then
$(3/p)=(5/p)=1$, equivalently $p\equiv11,59\pmod{120}$;
\item if $p\equiv7\pmod8$ and $30p$ is congruent, then
$(15/p)=1$, equivalently $(3/p)=(5/p)$.
\end{enumerate}
These are necessary conditions; an entry $2$ does not by itself imply
positive Mordell--Weil rank.
\end{proposition}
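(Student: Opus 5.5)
The plan is to compute $s_2(n)$ as the corank of the Monsky matrices \eqref{eq:Monsky} and \eqref{eq:Monsky-even}, with $n$ replaced by the odd part $15p$. Here $k=3$ and the primes are ordered $(3,5,p)$, so each Monsky matrix is $6\times6$ over $\F_2$. First I would write out the entries of $A_{15p}$, $D_{15p,2}$, $D_{15p,-1}$ and $D_{15p,-2}$ in terms of two kinds of data. The fixed data are $[5/3]=1$, $[3/5]=1$, $[2/3]=[2/5]=1$, $[-1/3]=1$, $[-1/5]=0$, $[-2/3]=0$ and $[-2/5]=1$. The variable data are $x=[3/p]$, $y=[5/p]$, together with $[2/p]$, $[-1/p]$ and $[-2/p]$, which are determined by $p\bmod 8$. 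Quadratic reciprocity then expresses $[p/3]$ and $[p/5]$ through $x$, $y$ and $p\bmod4$. Concretely, $[p/5]=y$ always, while $[p/3]=x$ for $p\equiv1\pmod4$ and $[p/3]=x+1$ for $p\equiv3\pmod4$. Both diagonal entries of $A$ involve sums, for example $a_{33}=[15/p]=x+y$.

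Next, for each of the four residue-class rows and the four sign patterns $(x,y)$, I will substitute these values and row reduce the $6\times6$ matrix. That gives sixteen explicit rank computations. The structural point that keeps this manageable is that the rows indexed by $3$ and $5$ are constant apart from entries involving $x,y$, so the reduction can be organised by first clearing the $3,5$-columns. I expect most cases to have rank $6$ (corank $0$), and in the exceptional cases a two-dimensional kernel should be visible directly. One check is that the kernel contains explicit Selmer-type vectors such as $\Lambda_p$-type classes built from $15$ or $3\cdot 5$. A second check is parity: root number considerations force the corank to have a fixed parity in each row ($n\equiv5,6,7\pmod8$ gives odd analytic rank, and so on). The table entries are all even, which matches $\dim\Sha[2]$ being even plus rank parity via the $2$-parity theorem. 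I can use this as a consistency check, but the proof itself is the direct corank computation.

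Finally, the consequences (i)--(iv) follow formally. If $n$ is congruent, then $\rk E_n(\Q)\ge1$, so by the identity $s_2(n)=\rk E_n(\Q)+\dim\Sha(E_n)[2]$ we have $s_2(n)\ge1$, which rules out every column with entry $0$. The stated congruence conditions modulo $120$ and $40$ come from the Chinese remainder theorem: combine $p\bmod8$ with $(3/p)$ expressed via $p\bmod12$ and $(5/p)$ expressed via $p\bmod5$. For instance, $p\equiv5\pmod8$ with $(3/p)=(5/p)=1$ gives $p\equiv1\pmod3$ and $p\equiv\pm1\pmod5$, hence $p\equiv61,109\pmod{120}$.

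The main obstacle is the sixteen rank computations, in particular getting the convention of \eqref{eq:Monsky-even} right (where $A^T$ appears and $D_{n,-1}$ is the off-diagonal block) and correctly accounting for the diagonal entries $a_{ii}=[(n/p_i)/p_i]$ for $p_i=3,5$, which depend on $p\bmod 3$ and $p\bmod5$. I would first do the $p\equiv5\pmod8$, $15p$ row carefully to fix the pattern, then reuse its reduction for the other rows, since they differ only in the $D$-blocks and in the reciprocity sign at $3$.
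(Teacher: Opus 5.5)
Your proposal is correct and is essentially the paper's own proof. It uses the same Monsky matrices for the ordered primes $(3,5,p)$, with your $x,y$ playing the role of the paper's $a=[3/p]$, $b=[5/p]$ and the residue $p\bmod 8$ entering through $h=[-1/p]$, $t=[2/p]$; it then does sixteen $\F_2$ corank computations and derives (i)--(iv) from $\rk E_n(\Q)\le s_2(n)$ and the Chinese remainder theorem. The reductions do work out as you plan, giving kernel dimensions $2(1-a)(1-b)$, $2(1-b)$, $2(1-a)(1-b)$ and $2\mathbf{1}_{\{a=b\}}$ for the four rows, exactly as in the paper.
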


\begin{proof}
Put $a=[3/p]$, $b=[5/p]$, $h=[-1/p]$, and $t=[2/p]$.
For the ordered odd prime factors $(3,5,p)$, quadratic reciprocity gives
\[
A=\begin{pmatrix}
1+a+h&1&a+h\\
1&1+b&b\\
a&b&a+b
\end{pmatrix}.
\]
The diagonal matrices are
\[
D_{15p,2}=\operatorname{diag}(1,1,t),\qquad
D_{15p,-1}=\operatorname{diag}(1,0,h).
\]
Substitution into \eqref{eq:Monsky} and \eqref{eq:Monsky-even}
reduces the calculation to these four pairs $(h,t)$ and matrices:
\[
\begin{array}{c|c|c}
n&p\bmod8&(h,t)\\\hline
15p&5&(0,1)\\
15p&7&(1,0)\\
30p&3&(1,1)\\
30p&7&(1,0)
\end{array}
\]
For completeness, row reduction over $\F_2$ gives the following
kernel dimensions, for $a,b\in\{0,1\}$:
\[
\begin{array}{c|c}
(n,p\bmod8)&\dim\ker M_n\\\hline
(15p,5)&2(1-a)(1-b)\\
(15p,7)&2(1-b)\\
(30p,3)&2(1-a)(1-b)\\
(30p,7)&2\mathbf1_{\{a=b\}}
\end{array}
\]
The expressions in the right column are ordinary integers.  This
proves the table by checking the four substitutions for $a,b$ in each
displayed matrix.  Since $\rk E_n(\Q)\le s_2(n)$, every zero entry
forces noncongruence.  The residue classes in (i)--(iii) follow from
quadratic reciprocity and the Chinese remainder theorem.
\end{proof}

The condition that the two signs agree in (iv) cannot be replaced by requiring both
symbols to be $+1$: for $p=7$ both are $-1$, whereas the triangle
with side lengths $20,21,29$ has area $210$.

\begin{proposition}[Class numbers on the permitted $30p$ layer]
\label{prop:fifteen-class}
Let $p\nmid30$ be prime, $p\equiv7\pmod8$, and $(15/p)=1$.  Then
\begin{equation}\label{eq:fifteen-class}
h(-15p)\equiv h(-30p)\equiv
\begin{cases}
0\pmod{16},&(3/p)=(5/p)=1,\\
8\pmod{16},&(3/p)=(5/p)=-1.
\end{cases}
\end{equation}
In particular, $h(-30p)\equiv h(-15p)\pmod{16}$ throughout this
entire Selmer layer, irrespective of the rank of $E_{30p}$.
\end{proposition}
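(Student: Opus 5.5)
The plan is to prove \eqref{eq:fifteen-class} by genus theory and R\'edei's $4$-rank formula, handling the two discriminants separately. We never need the rank of $E_{30p}$; this is why the congruence holds across the whole layer.

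\textbf{Step 1 (genus theory).} Since $p\equiv7\pmod8$, we have $-15p\equiv1\pmod8$ and $-30p\equiv 2\pmod 8$, so both fields have even discriminants. They factor into prime discriminants as
\[
-60p=(-4)(-3)(5)(-p),\qquad -120p=(-8)(-3)(5)(-p).
\]
Each is a product of four prime discriminants. Hence in both cases the $2$-rank of the class group equals $3$ and $8\mid h$. It follows that $h\equiv 0\pmod{16}$ exactly when the $4$-rank $r_4$ is at least $1$, and $h\equiv8\pmod{16}$ exactly when $r_4=0$, that is, when the $2$-primary part is $(\Z/2\Z)^3$. So the statement reduces to the following claim: for both discriminants, under $(15/p)=1$,
\[
r_4\ge1\iff (3/p)=(5/p)=1 .
\]

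\textbf{Step 2 (R\'edei matrices).} I would write down the $4\times4$ R\'edei matrix over $\F_2$ for each discriminant $D$. Its $(i,j)$ entry is the additive Kronecker symbol of the $j$-th prime discriminant $d_j$ at the prime of $d_i$. Its diagonal entries are fixed by the row-sum-zero relation, which comes from the product formula. Then $r_4=3-\rk$ of this matrix. Put $a=[3/p]$, $b=[5/p]$, with $a=b$ imposed by $(15/p)=1$, in the notation of the proof of Proposition~\ref{prop:fifteen-selmer}. The odd entries then follow from quadratic reciprocity: $[5/3]=1$, and the symbols $[-3/p]$, $[5/p]$, $[-p/3]$, $[-p/5]$ are all expressed through $a$ and $b$. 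For the dyadic row, $-4$ or $-8$, I would evaluate the characters $\chi_{-4}$ and $\chi_{-8}$ at $3$, $5$ and $p$. Since $p\equiv7\pmod8$, we get $\chi_{-4}(p)=-1$ and $\chi_{-8}(p)=1$. The dyadic column records how $-3$, $5$ and $-p$ behave at $2$: modulo $8$ these are $5$, $5$ and $1$.

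\textbf{Step 3 (rank computation).} Substituting these values, I expect the following rank behaviour in both cases. When $a=b=1$ the matrix should have full rank $3$, so $r_4=0$ and $h\equiv8\pmod{16}$. When $a=b=0$ the matrix should have rank $2$, so $r_4=1$ and $16\mid h$. I would check this by explicit row reduction in the two cases $a=b\in\{0,1\}$, in the same way as the kernel table in the proof of Proposition~\ref{prop:fifteen-selmer}. As a cross-check, I would compare with Zhang's class group formula \cite[Thm.~2.8]{Zhang26}, which ties $4$-ranks to coranks of Monsky-type matrices. With that formula, the case $a=b=0$ should match the Selmer dimension $2$ in the $(+,+)$ column, though I would not rely on this.

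\textbf{Main obstacle.} The step I expect to be delicate is the dyadic entries. For $-8$ versus $-4$ the local Hilbert symbols at $2$ depend on residues modulo $8$, and a sign slip there would swap the two cases. I would verify the final answer numerically before trusting the algebra. For $p=7$ (so $a=b=1$) one has $h(-105)=8$ and $h(-210)=8$, as it should be. For $p=31$ one has $(3/31)=(5/31)=1$ in the sense $a=b=0$, and the expected outcome is $16\mid h(-465)$ and $16\mid h(-930)$. Once both cases are established, the final claim $h(-30p)\equiv h(-15p)\pmod{16}$ is immediate, since the two class numbers fall in the same residue class in each case.
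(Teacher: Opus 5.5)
Your route is the paper's: $-60p=(-4)(-3)(5)(-p)$ and $-120p=(-8)(-3)(5)(-p)$ both have $2$-rank $3$, so with $a=[3/p]=[5/p]$ the claim reduces to showing the $4$-rank is $1$ for $a=0$ and $0$ for $a=1$. The $4$-rank is $3$ minus the rank of the R\'edei matrix. That rank computation is the whole proof, and you only state what you expect it to give. Worse, the one dyadic entry you actually state is wrong.

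\textbf{The dyadic entry.} For $p\equiv7\pmod8$,
\[
\chi_{-8}(p)=\left(\frac{-1}{p}\right)\left(\frac{2}{p}\right)=-1,
\]
not $+1$; indeed $\Q(\sqrt{-2})$ splits only primes $\equiv1,3\pmod8$. With your value, the $p$-row of the $-120p$ matrix is $(0,1+a,a,1)$ instead of $(1,1+a,a,0)$. At $a=0$ the rows become $(0,1,1,0)$ twice, $(1,1,0,0)$ and $(0,1,0,1)$. This matrix has rank $3$, so you would get $4$-rank $0$ and conclude $h(-30p)\equiv8\pmod{16}$, contradicting the statement.

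With correct entries (rows indexed by $\ell=2,3,5,p$, columns by the prime discriminants in the order above, $a=b$) one gets
\[
R_{-60p}=\begin{pmatrix}0&1&1&0\\1&a&1&a\\0&1&1+a&a\\1&1+a&a&0\end{pmatrix},\qquad
R_{-120p}=\begin{pmatrix}0&1&1&0\\0&1+a&1&a\\1&1&a&a\\1&1+a&a&0\end{pmatrix}.
\]
For $a=0$ each matrix has rank $2$. In the first, rows $1$ and $3$ coincide and row $4$ is the sum of rows $1$ and $2$. In the second, the rows coincide in pairs. So the $4$-rank is $1$ and $16\mid h$. For $a=1$ the upper left $3\times3$ minor of each matrix is $1$, so the rank is $3$, the $4$-rank is $0$, and $h\equiv8\pmod{16}$. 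This is exactly the paper's argument.

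\textbf{The numerical safeguard.} Your check would not have caught the error, because $p=31$ is not in the layer. Here $(5/31)=1$ but $(3/31)=-(31/3)=-1$, so $(15/31)=-1$. Counting reduced forms gives $h(-465)=16$ but $h(-930)=24\equiv8\pmod{16}$. So this test would appear to confirm your mistaken dyadic entry and to refute the proposition for $-30p$. The layer primes with $(3/p)=(5/p)=1$ are exactly $p\equiv71,119\pmod{120}$, for instance $p=71$.

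A minor slip: $-15p\equiv7$ and $-30p\equiv6\pmod8$, not $1$ and $2$. Your factorizations into prime discriminants are nevertheless correct.
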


\begin{proof}
Put $a=[3/p]=[5/p]\in\F_2$.  The fundamental discriminants are
$-60p=(-4)(-3)5(-p)$ and $-120p=(-8)(-3)5(-p)$.
Both class groups therefore have $2$-rank $3$.
Use the transpose of the R\'edei matrix convention in
\cite[Thm.~3.1, Eq.~(16)]{Stevenhagen}: if the prime discriminants
are $d_0,d_1,d_2,d_3$ and $\ell_i\mid d_i$ is prime, its
off-diagonal entry is $[d_j/\ell_i]$, with diagonal chosen so that
each row sums to zero.  At $\ell_i=2$ the symbol is Kronecker.
In the orders displayed above, the two matrices are
\[
R_{-60p}=\begin{pmatrix}
0&1&1&0\\
1&a&1&a\\
0&1&1+a&a\\
1&1+a&a&0
\end{pmatrix},\qquad
R_{-120p}=\begin{pmatrix}
0&1&1&0\\
0&1+a&1&a\\
1&1&a&a\\
1&1+a&a&0
\end{pmatrix}.
\]
For $a=0$, the rows of each matrix span a space of dimension $2$.
For $a=1$, each upper left $3\times3$ minor has determinant $1$,
while the row sums are zero, so each rank is $3$.  Thus the
$4$-ranks of both class groups are $1$ and $0$, respectively.
If $a=0$, the $2$-primary group has three cyclic factors, at least
one of order $4$ or greater, so its order is divisible by $16$.
If $a=1$, all three factors have order $2$, so the class number
is $8$ times an odd integer.  This proves \eqref{eq:fifteen-class}.
\end{proof}

\begin{corollary}[Normalized defects for $15p$ and $30p$]
\label{cor:fifteen-defect}
Let $(n,p\bmod8)$ occur in Proposition~\ref{prop:fifteen-selmer}.
Since $\sigma_Q(n)=8$, put
\[
H_{15p}=\frac{r_f(15p)-r_g(15p)}{16},\qquad
H_{30p}=\frac{r_s(15p)-r_t(15p)}{16}.
\]
On a layer with entry $0$, $H_n$ is odd, $E_n$ has rank zero, and
$\Sha(E_n)[2^\infty]=0$.  On a layer with entry $2$, choose a basis
of $\Sel_2'(E_n)$ and write its Cassels matrix as
$\left(\begin{smallmatrix}0&\kappa_n\\\kappa_n&0\end{smallmatrix}\right)$.
Then
\[
\frac{H_n}{2}\equiv\kappa_n\pmod2.
\]
In particular, $\kappa_n=1$ is equivalent to rank zero with
$\Sha(E_n)[2^\infty]\cong(\Z/2\Z)^2$.  If $\kappa_n=0$, then
$4\mid H_n$; this case may require further descent.
Whenever $H_n\ne0$, the full Shafarevich--Tate group is finite and
$|\Sha(E_n)|=H_n^2$.
\end{corollary}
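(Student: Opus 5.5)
The plan is to specialize Theorem~\ref{thm:main-principle} and Theorem~\ref{thm:radical-depth} to $n\in\{15p,30p\}$, reading the pure Selmer dimension from Proposition~\ref{prop:fifteen-selmer}. First I record the normalizations. For $n=15p$ we have $\sigma_Q(n)=\sigma_0(15p)=8$. For $n=30p$ we have $\sigma_Q(n)=\sigma_0(15p)=8$. Hence in both cases $\Delta_Q(n)/(2\sigma_Q(n))=\Delta_Q(n)/16=H_n$, which agrees with the definitions in the statement.

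On a layer with entry $0$ we have $s_2(n)=0$. The Cassels matrix is then empty, so its Pfaffian is $1$ by convention. Theorem~\ref{thm:main-principle} gives $v_2(H_n)=0$, that is, $H_n$ is odd. The same theorem gives $\rk E_n(\Q)=0$ and $\Sha(E_n)[2^\infty]\cong(\Z/2\Z)^0=0$.

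On a layer with entry $2$, the alternating $2\times2$ Cassels matrix has the displayed form with Pfaffian $\kappa_n$. By Remark~\ref{rem:s2two}, this value is basis independent. Theorem~\ref{thm:main-principle} with $s_2(n)/2=1$ gives $2\mid H_n$ and $H_n/2\equiv\Pf=\kappa_n\pmod 2$. It also gives that $\kappa_n=1$ is equivalent to rank zero with $\Sha(E_n)[2^\infty]\cong(\Z/2\Z)^2$. If $\kappa_n=0$, the radical has dimension $d=2$, and Theorem~\ref{thm:radical-depth} yields $v_2(H_n)\ge(2+2)/2=2$, so $4\mid H_n$.

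For the final assertion, suppose $H_n\ne0$. Then $\Delta_Q(n)\ne0$, and Proposition~\ref{prop:qin}(i) forces $L(E_n,1)\ne0$. Lemma~\ref{lem:cm-bsd} then gives the full BSD formula, so $\Sha(E_n)$ is finite and $|\Sha(E_n)|=|\Sha(E_n)|_{\mathrm{an}}=H_n^2$ by Proposition~\ref{prop:qin}(i).

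The only point needing care is the even case. There one must confirm that Proposition~\ref{prop:qin}(i) applies to $n=30p$ with $\Delta_Q(30p)=r_s(15p)-r_t(15p)$. This is exactly the even branch of \eqref{eq:defect}, so the Proposition covers it as stated. Beyond that, the argument is bookkeeping, and I expect no real obstacle.
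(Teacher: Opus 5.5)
Your proposal is correct and follows the paper's proof, which applies Theorems~\ref{thm:main-principle} and \ref{thm:radical-depth} (using $\Pf(\varnothing)=1$ when $s_2(n)=0$), and then Proposition~\ref{prop:qin} with Lemma~\ref{lem:cm-bsd} for the final assertion. One small correction: Remark~\ref{rem:s2two} only observes that the Pfaffian is a single pairing value. The basis independence of $\kappa_n$ comes instead from the last paragraph of the proof of Theorem~\ref{thm:main-principle}, where over $\F_2$ one has $\Pf(P^TCP)=\det(P)\Pf(C)=\Pf(C)$.
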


\begin{proof}
Apply Theorems~\ref{thm:main-principle} and
\ref{thm:radical-depth}, including the convention for the empty
Pfaffian when $s_2(n)=0$.  The last assertion follows from
Proposition~\ref{prop:qin} and Lemma~\ref{lem:cm-bsd}.
\end{proof}

\section*{Declarations}

\medskip
\noindent{\large\bfseries Funding}\par
\medskip
This research was supported by the National Natural Science Foundation of China (NSFC), Grant Nos. 12231009 and 11971224.

\bigskip
\noindent{\large\bfseries Conflict of interest}\par
\medskip
The author declares that there is no conflict of interest.

\bigskip
\noindent{\large\bfseries Data availability}\par
\medskip
Data sharing is not applicable to this article. All arguments and calculations used in the proofs are contained in the text. The finite computations reported as consistency checks are not used in the proofs.

\end{document}